\documentclass[11pt]{amsart}
\usepackage[english]{babel}
\usepackage[utf8]{inputenc}
\usepackage{amsmath}
\usepackage{amssymb}
\usepackage{amsfonts}
\usepackage{amsthm}
\usepackage{mathrsfs} 
\usepackage[all]{xy}
\usepackage[pdftex]{graphicx}
\usepackage{color} 
\usepackage{cite}
\usepackage{url}
\usepackage{indent first}
\usepackage[labelfont=bf,labelsep=period,justification=raggedright]{caption}
\usepackage[english]{babel}
\usepackage[utf8]{inputenc}
\usepackage{hyperref}
\usepackage[colorinlistoftodos]{todonotes}
\usepackage{tkz-fct} 
\usepackage[margin=1in]{geometry} 
\usepackage{tikz}

\DeclareMathOperator{\Prob}{Prob}

\DeclareMathOperator{\MC}{\textsf{Max-Cut}}
\DeclareMathOperator{\MLC}{\textsf{Max-$l$-Cut}}

\newcommand{\eps}{\varepsilon}

\newcommand{\EE}{\mathbb{E}}

\newcommand{\mcH}{\mathcal{H}}

\makeatletter
\newcommand\thankssymb[1]{\textsuperscript{\@fnsymbol{#1}}}
\makeatother

\theoremstyle{plain}
\newtheorem{thm}{Theorem}

\newtheorem{lemma}[thm]{Lemma}
\newtheorem{cor}[thm]{Corollary}
\newtheorem{conj}[thm]{Conjecture}
\newtheorem{prop}[thm]{Proposition}

\theoremstyle{definition}
\newtheorem{defn}[thm]{Definition}
\newtheorem{q}[thm]{Question}

\theoremstyle{remark}
\newtheorem{rem}[thm]{Remark}

\numberwithin{equation}{section}
\numberwithin{thm}{section}

\begin{document}

\title{Making an $H$-free graph $k$-colorable}
\author{Jacob Fox\thankssymb{1}}
\thanks{\thankssymb{1} Department of Mathematics, Stanford University, Stanford, CA 94305. Email: {\tt jacobfox@stanford.edu}. Research supported in part by a Packard Fellowship and by NSF Award DMS-1855635.}
\author{Zoe Himwich\thankssymb{2}}
\thanks{\thankssymb{2} Department of Mathematics, Columbia University, New York, NY. Email:
\href{mailto:himwich@math.columbia.edu} {\nolinkurl{himwich@math.columbia.edu}}.
}
\author{Nitya Mani\thankssymb{3}}
\thanks{\thankssymb{3} Department of Mathematics, Massachusetts Institute of Technology, Cambridge, MA. Email: \href{mailto:nmani@mit.edu} {\nolinkurl{nmani@mit.edu}}.
Research supported in part by a Hertz Fellowship, an MIT Presidential Fellowship, and the NSF GRFP}

\maketitle

\begin{abstract}
We study the following question: how few edges can we delete from any $H$-free graph on $n$ vertices in order to make the resulting graph $k$-colorable? It turns out that various classical problems in extremal graph theory are special cases of this question. For $H$ any fixed odd cycle, we determine the answer up to a constant factor when $n$ is sufficiently large. We also prove an upper bound when $H$ is a fixed clique that we conjecture is tight up to a constant factor, and prove upper bounds for more general families of graphs. We apply our results to get a new bound on the maximum cut of graphs with a forbidden odd cycle in terms of the number of edges.  
\end{abstract}

\section{Introduction}
All graphs we consider are finite, undirected and simple, unless otherwise specified. A graph is {\it $H$-free} if it does not contain $H$ as a subgraph. For a collection $\mathcal{H}$ of graphs, a graph is $\mathcal{H}$-free if it does not contain any graph in $\mathcal{H}$ as a subgraph. The {\it girth} of a graph is the length of the shortest cycle, and it is infinite if the graph is a forest. The \textit{chromatic number} $\chi(G)$ of a graph $G$ is the minimum number of colors needed to properly color the vertices of the graph so that no two adjacent vertices receive the same color. 

A famous result of Erd\H{o}s \cite{E59} states that there are graphs of arbitrarily large girth and chromatic number. While these graphs are locally sparse, they cannot be properly colored with few colors. We study here a slightly different local-global problem in graphs with a similar flavor: how resilient to being $k$-colorable can a graph be given a local constraint like a forbidden subgraph?

Precisely, for a graph $G$ and a positive integer $k$, how few edges, which we denote by $h(G,k)$, can we delete from $G$ in order to make the remaining subgraph $k$-colorable? For a graph $G$ and positive integers $n$ and $k$, let $h(n,k,H)$ be the maximum of $h(G,k)$ over all $n$-vertex graphs $G$ which are $H$-free, that is, which do not contain $H$ as a subgraph. We define $h(n,k,\mathcal{H})$ analogously for $\mcH$ a family of forbidden subgraphs. Determining or estimating $h(n,k,H)$ is a very challenging problem. Special cases of this problem include several famous problems in extremal graph theory. For example, the case $k=1$ is the classical Tur\'an problem on the maximum number of edges an $H$-free graph on $n$ vertices can have. 

A longstanding conjecture of Erd\H{o}s (he wrote in 1975 \cite{ER75} that it was already old) would solve the case where $H$ is a triangle and $k=2$. This conjecture states that every triangle-free graph on $n$ vertices can be made bipartite by deleting at most $n^2/25$ edges. If true, this conjectured bound is the best possible. This can be seen by considering a balanced blow-up of a cycle on five vertices. While there are many papers on this problem, the best known upper bound \cite{EGS92} is a little better than $n^2/18$. Solving another conjecture of Erd\H{o}s, Sudakov \cite{Su07} showed that any $K_4$-free graph on $n$ vertices can be made bipartite by removing at most $n^2/9$ edges. That is, $h(n,2, K_4) \le n^2/9$. This bound is tight, which can be seen by considering a balanced blow-up of a triangle. He deduced as a corollary that, if $H$ is a fixed graph with $\chi(H)=4$, then $h(n,2, H) \leq (1+o(1))n^2/9$. Sudakov further conjectured for $r>4$ that the balanced complete $(r-1)$-partite graph on $n$ vertices is the furthest from being bipartite over all $K_r$-free graphs, which would determine $h(n,2, K_r)$ for $r>3$. It was recently announced \cite{Lid18} that Hu, Lidick\' y, Martins-Lopez, Norin, and Volec verified the case $r=6$ of Sudakov's conjecture and further proved the corresponding upper bound for all $H$ of chromatic number $6$. 

The other extreme is determining the minimum $k$ for which $h(n,k,H)$ is zero. This is the same as determining the maximum possible chromatic number that an $H$-free graph on $n$ vertices can have. This problem is very old (see  \cite{E59}), and closely related to estimating the Ramsey number $r(H,K_s)$.  If $r(H,K_s) > n$ and $k \leq n/s$, then there is an $H$-free graph $G$ on $n$ vertices which does not contain an independent set of order $s$. In particular, in any $k$-coloring of the vertices, one of the color classes has at least $n/k \geq s$ vertices and must contain an edge, implying $h(n,k,H)>0$. On the other hand, if $H$ is connected, $k \geq (n/s) \cdot \log_2 (2n)$, and $n > r(H,K_s)$, then by greedily picking out largest independent sets, one can properly $k$-color any $H$-free graph on $n$ vertices. This bound on the number of colors can be deduced from the fact that the minimum possible independence number of an $H$-free graph on $n$ vertices is a monotonically increasing and subadditive function of $n$. 

In this paper, we will be primarily interested in the intermediate case, when $|H| \ll k \ll n$. Throughout the article, we use the notation $\Theta_{x}$, $\Omega_{x}$, $O_{x}$, and $o_{x}$ to indicate that the implicit constant factors may depend on $x$. 

Note that for any graph $G$ on $n$ vertices we have the simple bound $h(G, k) \le h(K_n,k) \leq {n \choose 2}/k \leq n^2/(2k)$ by considering a random $k$-partition of $V(G)$. Thus, we will be primarily interested in understanding how much of an improvement we can give over this straightforward bound for $H$-free graphs.

The following theorem gives an upper bound on $h(n, k, H)$ when $H$ is a clique on $r$ vertices.
 
\begin{thm}\label{t:clique} 
For each integer $r \geq 3$ there is $c_r$ such that for all positive integers $n, k$ we have $$h(n, k, K_r) \le c_r \frac{n^2}{k^{(r-1)/(r-2)}}.$$
\end{thm}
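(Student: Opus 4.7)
The plan is to prove the theorem by strong induction on $r \ge 3$, with base case $r = 3$, and within each $r$ by a secondary induction on $n$. Throughout, I would split cases based on whether the maximum degree of $G$ is below a threshold $D_r := 2 c_r n/k^{1/(r-2)}$: the low-degree case is dispatched by a uniformly random $k$-coloring, while the high-degree case exploits the structural fact that the neighborhood of any vertex in a $K_r$-free graph is $K_{r-1}$-free, so the inductive hypothesis can be applied there.

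For the base case $r = 3$, I would prove the sharper bound $h(G,k) \le n^2/(2k^2)$ for triangle-free $G$. If $\Delta(G) < n/k$, then $|E(G)| \le n^2/(2k)$ and a uniformly random $k$-coloring has expected defect count $\le |E(G)|/k \le n^2/(2k^2)$. Otherwise, pick a vertex $v$ of maximum degree; since $G$ is triangle-free, $N(v)$ is an independent set of size $\ge n/k$, so I make $N(v)$ one color class and recurse on $G - N(v)$ (still triangle-free, on at most $n(k-1)/k$ vertices) with $k-1$ colors. By the inductive hypothesis, this picks up at most $(n - d(v))^2/(2(k-1)^2) \le n^2/(2k^2)$ further defects, closing the induction.

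For the inductive step $r \ge 4$, suppose the result holds for $K_{r-1}$-free graphs with constant $c_{r-1}$. If $\Delta(G) < D_r$, a random $k$-partition gives expected defect count at most $nD_r/(2k) = c_r n^2/k^{(r-1)/(r-2)}$. Otherwise pick $v$ with $d(v) \ge D_r$ and an integer $k' \in \{1,\dots,k-1\}$ to be chosen. I color $G[N(v)]$ with the colors $\{1,\dots,k'\}$ using the outer inductive hypothesis on $r - 1$ (since $G[N(v)]$ is $K_{r-1}$-free), giving at most $c_{r-1} d(v)^2/(k')^{(r-2)/(r-3)}$ defects. Separately, I color $G[V \setminus N(v)]$ with the colors $\{k'+1,\dots,k\}$ using the inner inductive hypothesis on $n$, giving at most $c_r (n-d(v))^2/(k-k')^{(r-1)/(r-2)}$ defects. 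The crucial feature is that because the two sides use \emph{disjoint} color palettes, every edge crossing between $N(v)$ and $V \setminus N(v)$ has endpoints of different colors and so contributes no defect; this is the structural trick that replaces the ``$N(v)$ is independent'' fact used at $r=3$.

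The remaining work is to choose $k'$ and fix the constants so that the sum of these two bounds is at most $c_r n^2/k^{(r-1)/(r-2)}$. A short calculation suggests that
\[
k' \;\asymp\; \bigl(d(v)/n\bigr)^{(r-3)/(r-2)}\, k^{(r-1)(r-3)/(r-2)^2}
\]
balances the two terms, and the induction closes if $c_r$ is chosen large enough in terms of $c_{r-1}$ (for instance, $c_4 \gtrsim \sqrt{c_3}$ already works at $r = 4$). The main obstacle I anticipate is the bookkeeping around boundary regimes: when $k' < 1$ or $k' \ge k$, or when $k$ is small relative to $r$, the scheme has to fall back to the trivial bound $h(G,k) \le n^2/(2k)$, with these losses absorbed into a sufficiently large $c_r$. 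A secondary technical point is ensuring the ratio $(k-k')^{(r-1)/(r-2)}/k^{(r-1)/(r-2)}$ is close enough to $1$ not to spoil the second bound, which is guaranteed since the chosen $k'$ satisfies $k'/k = O(k^{-1/(r-2)^2}) \to 0$.
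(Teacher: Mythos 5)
Your proposal is correct in outline and takes a genuinely different route from the paper's proof. The paper's argument is a ``bulk'' covering argument: by a random-choice lemma (Corollary~2.6 in the paper), for any graph $G$ on $n$ vertices there are $O(t)$ vertices whose neighborhoods together cover all but $O(n^2/t)$ edges; since each such neighborhood is $K_{r-1}$-free, it can be partitioned into $s$ parts recursively, and Lemma~2.7 (``grow the parts'') lifts this to a $2st$-partition of all of $V$. This gives the clean recursion
\[
h(n,2st,K_r) \;\le\; 2t\cdot h(n/t,s,K_{r-1}) + \frac{n^2}{2est^2},
\]
which closes by induction on $r$ after specializing $k = t^{r-2}$ and then bridging to general $k$ by monotonicity. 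Your scheme instead peels off a single max-degree vertex $v$, colors $G[N(v)]$ with $k'$ colors via the outer induction on $r$ (using that $N(v)$ is $K_{r-1}$-free) and $G[V\setminus N(v)]$ with the remaining $k-k'$ colors via a secondary induction on $n$, with the elegant observation that disjoint palettes annihilate all cross-edges. The cost of this more local argument is the low-degree/high-degree dichotomy and the secondary induction on $n$, which the paper's global covering step sidesteps. Your balancing computation is consistent: with $k' \asymp (d(v)/n)^{(r-3)/(r-2)} k^{(r-1)(r-3)/(r-2)^2}$ one has $(k')^{(r-2)/(r-3)} \asymp (d(v)/n)\,k^{(r-1)/(r-2)}$, so the $N(v)$-term becomes $(c_{r-1}/c_r)\,\alpha$ times the target and the $V\setminus N(v)$-term becomes $(1-\alpha)^2(1+O(k'/k))$ times the target where $\alpha = d(v)/n$; the error $O(k'/k)$ is dominated by the $\Theta(\alpha)$ slack precisely because the high-degree threshold $D_r$ guarantees $\alpha \gtrsim k^{-1/(r-2)}$. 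One small imprecision: the recurrence on constants does not close with $c_4 \gtrsim \sqrt{c_3}$; you actually need $c_r$ to exceed a sufficiently large constant multiple of $c_{r-1}$ (e.g.\ $c_r \ge 2c_{r-1}$ plus an absolute additive constant to absorb the bookkeeping and the small-$k$ fallback to the trivial bound $n^2/(2k)$), mirroring the paper's explicit $\alpha_r = \tfrac{2}{e} + 4\alpha_{r-1}$.
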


We conjecture that Theorem \ref{t:clique} is sharp up to the constant factor $c_r$ for $n$ sufficiently large in terms of $k$, and prove this for $r=3$. 

For fixed odd cycles and $n$ sufficiently large in $k$, we determine this function up to a constant factor. We first show an upper bound on $h(G, k)$ for graphs $G$ of large odd girth.

\begin{thm}\label{t:cyclegirth} For each positive integer $r$ there is $c_r$ such that the following holds.
Let $\mcH_r = \{C_3, C_5, \ldots C_{2r+1}\}$ be the set of odd cycles of length at most $2r + 1$. Then,
$$h(n, k, \mcH_r) <  c_r\frac{n^2}{k^{r+1}}.$$
\end{thm}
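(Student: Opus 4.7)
The plan is to prove the bound by an iterative peeling argument that exploits a key structural property of radius-$r$ balls in graphs of high odd girth. The observation is that for any vertex $v \in V(G)$ and any $i \le r$, the BFS level $L_i(v) = \{u : d(v,u) = i\}$ is an independent set in $G$: an edge $uu'$ inside $L_i$ together with shortest $v$-to-$u$ and $v$-to-$u'$ paths would produce a closed odd walk of length at most $2i+1 \le 2r+1$, hence an odd cycle of length at most $2r+1$, contradicting $\mcH_r$-freeness. Consequently the map $u \mapsto d(v,u) \bmod (r+1)$ is a proper $(r+1)$-coloring of $G[B_r(v)]$ using only $r+1$ colors.

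With this in hand, fix a threshold $\tau = C_r \, n / k$ with $C_r = r+1$. While some vertex $v$ of the current graph satisfies $|B_r(v)| \ge \tau$, peel off $B_r(v)$, color it with a \emph{fresh} palette of $r+1$ colors via $d(v,\cdot) \bmod (r+1)$, and recurse on the rest with $k-(r+1)$ remaining colors. Disjoint palettes across rounds prevent monochromatic edges across peeled blocks or between peeled and residual vertices, and the structural observation ensures none appear inside a peeled block. A short calculation shows that the invariant $n'/k' \le n/k$ is preserved at each step. When the peeling halts, every ball in the residual graph satisfies $|B_r(v)| < \tau$, so its maximum degree is less than $\tau = O_r(n/k)$ and it has at most $O_r(n^2/k)$ edges. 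For $r=1$ this already suffices: a uniform random $k$-coloring of the residual yields at most $O(n^2/k^2)$ monochromatic edges in expectation, matching the target.

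The main obstacle is the sparse residual regime for $r \ge 2$: the target $n^2/k^{r+1}$ improves by a further factor of $k^{r-1}$ on what a random $k$-coloring of an $O_r(n^2/k)$-edge graph delivers, and this gap must be closed using the full odd-girth assumption rather than merely the maximum degree bound. I expect the refinement to be a structured random coloring driven by $s \asymp \log k$ independent random BFS roots $v_1, \dots, v_s$, in which each vertex $u$ receives a signature $\sigma(u) = (d(v_i,u) \bmod (r+1))_{i=1}^s$ with an independent uniform fill-in whenever $u \notin B_r(v_i)$, together with an independent uniform secondary label that exhausts the $k$-color palette. The structural fact forces $\sigma(u) \ne \sigma(w)$ whenever some $v_i$ lies in $B_r(u) \cap B_r(w)$, so per-edge color-collision probability is governed by the chance that no sampled root lands in $B_r(u) \cap B_r(w)$. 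The hard technical step, which is where I expect the odd-girth hypothesis to be used in full force, is to show that these intersections are large on average for adjacent pairs — that is, that odd girth at least $2r+3$ forces radius-$r$ balls of adjacent vertices to overlap substantially — producing the additional $k^{-r}$ per-edge savings required.
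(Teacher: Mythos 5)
Your opening observation---that BFS levels $L_i(v)$ for $i \le r$ are independent sets in a graph with no odd cycle of length $\le 2r+1$, hence $u \mapsto d(v,u) \bmod (r+1)$ properly colors $G[B_r(v)]$---is exactly the structural fact underlying the paper's Lemma~\ref{l:gB}, and your peeling-plus-random-coloring argument does complete the case $r=1$ (it is essentially a ball-based rephrasing of Proposition~\ref{p:k3}). However, you explicitly leave the case $r\ge 2$ unproven, and that is where the entire content of the theorem lives: after your peeling halts, you only know the residual has maximum degree $O_r(n/k)$, hence $O_r(n^2/k)$ edges, and a random $k$-coloring then gives $O_r(n^2/k^2)$ monochromatic edges, a factor $k^{r-1}$ short of the target.

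The signature-based refinement you sketch for $r \ge 2$ does not close this gap as stated, and I do not see how to repair it cheaply. First, a parameter-counting obstruction: with $s$ roots each contributing an $(r+1)$-ary coordinate, the signature space has size $(r+1)^s$, so to leave any room for the secondary label you need $s \lesssim \log_{r+1} k$; but after peeling, $|B_r(u)\cap B_r(w)| < \tau = (r+1)n/k$, so a single random root lands in $B_r(u)\cap B_r(w)$ with probability $O(1/k)$, and with $s = O(\log k)$ roots the chance at least one root "sees" the edge $uw$ is only $O(\log k / k)$---nowhere near the $1 - O(k^{-r})$ you would need. Second, the hoped-for structural lemma ("odd girth $> 2r+1$ forces radius-$r$ balls of adjacent vertices to overlap substantially") is false in the relevant generality: in a $d$-regular graph of girth $> 2r$, $|B_r(v)| \asymp d^r$ while $B_r(u)\cap B_r(w) \supseteq B_{r-1}(u)$ gives only $\Omega(d^{r-1})$, a vanishing fraction, and the pseudorandom constructions of Alon--Kahale cited in this very paper are essentially this extremal case. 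The paper's route is global rather than local: Lemma~\ref{l:g2} extracts from any $S$ a \emph{single} independent set $A$ whose incident degree sum $D(A)$ is an $\Omega(x^{-1})$-fraction of $D(S)$, where $x = (|S|n/D(S))^{1/r}$, and then tracks the resulting decay $\delta_{i+1} \le \delta_i(1 - \tfrac18\delta_i^{1/r})$ over $k$ iterations, showing $\delta_{k+1} = O_r(k^{-r})$. Crucially, the savings ratio $x^{-1}$ \emph{improves} as $D(S)$ shrinks, which is what generates the full power $k^{r+1}$; your peeling threshold is static and cannot generate that gain. If you want to salvage your framework, you would need to replace the single-threshold peeling and random fill by something that, like the paper's recursion, adaptively extracts denser and denser independent structure as the residual thins out.
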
 

From this result on graphs of large odd girth, we can deduce a similar result for graphs with a single forbidden odd cycle. 

\begin{thm}\label{t:oddcycle} 
For positive integers $n\geq k \ge 1$, and $r \ge 1$, we have 
$h(n, k, C_{2r+1}) = h(n, k, \mcH_r) + O_r(n^{3/2})$, where $\mcH_r$ is the family of odd cycles of length at most $2r+1$. In particular,
$$h(n, k, C_{2r+1})  = O_r\left(\frac{n^2}{k^{r+1}}\right).$$
\end{thm}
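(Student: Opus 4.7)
The easy direction $h(n, k, \mcH_r) \le h(n, k, C_{2r+1})$ is immediate, since any $\mcH_r$-free graph is in particular $C_{2r+1}$-free. For the reverse inequality, my plan is to reduce it to the following: any $C_{2r+1}$-free graph on $n$ vertices can be converted to an $\mcH_r$-free graph by deleting $O_r(n^{3/2})$ edges. Granting this reduction lemma, if $G$ is a $C_{2r+1}$-free graph on $n$ vertices realizing $h(n, k, C_{2r+1})$, one first applies the lemma to obtain an $\mcH_r$-free subgraph $G'$ using at most $O_r(n^{3/2})$ deletions, then invokes the definition of $h(n, k, \mcH_r)$ to make $G'$ (and hence $G$) $k$-colorable with at most $h(n, k, \mcH_r)$ further deletions. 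The ``in particular'' clause then follows by combining with Theorem~\ref{t:cyclegirth} in the regime where $k$ is small enough for the $n^{3/2}$ term to be absorbed into $n^2/k^{r+1}$.

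I would prove the reduction lemma by induction on $r$. The base case $r = 1$ is vacuous, since $\mcH_1 = \{C_3\}$ coincides with the single forbidden graph $C_{2r+1} = C_3$. For the inductive step from $r-1$ to $r$, the core is the following one-step claim: in any $C_{2r+1}$-free graph on $n$ vertices, there exists a set of $O_r(n^{3/2})$ edges whose removal eliminates every copy of $C_{2r-1}$. Once this is in hand, applying the claim to a $C_{2r+1}$-free $G$ yields a subgraph $G''$ which is both $C_{2r+1}$-free (automatically) and $C_{2r-1}$-free; the inductive hypothesis, applied to $G''$ viewed as a $C_{2r-1}$-free graph, then produces a further $O_r(n^{3/2})$ deletions making the result $\mcH_{r-1}$-free. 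Since edge deletion preserves $C_{2r+1}$-freeness, the final graph is $\mcH_{r-1}$-free and $C_{2r+1}$-free, hence $\mcH_r$-free, completing the induction.

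The main obstacle is therefore the one-step claim. I expect to prove it through a combining argument: if too many edges of $G$ participate in copies of $C_{2r-1}$, a careful counting should locate two such cycles sharing structure (a common edge or short common subpath) whose arcs can be interchanged, with attention to parity, to produce a cycle of odd length exactly $2r+1$, contradicting the $C_{2r+1}$-free hypothesis on $G$. The $n^{3/2}$ rate is reminiscent of the Kővári--Sós--Turán bound for $C_4$-free graphs, suggesting the counting step bounds pairs of vertices with a common neighborhood structure, while the $C_{2r+1}$-free constraint enforces the needed parity control. The delicate technical point will be the parity bookkeeping in the rearrangement step: ensuring that the cycle produced by the swap has length exactly $2r+1$ rather than some other odd value, and that the swap preserves simplicity of the cycle. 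Once these are handled, the bound on the number of $C_{2r-1}$-participating edges will follow by a supersaturation-style double count.
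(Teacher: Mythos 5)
Your reduction to the claim that any $C_{2r+1}$-free graph on $n$ vertices can be made $\mcH_r$-free by deleting $O_r(n^{3/2})$ edges is precisely the lemma the paper proves, and your induction-on-$r$ scaffolding around it is a legitimate alternative to the paper's simultaneous treatment of all shorter odd lengths. The gap is in the proposed proof of the one-step claim. If $C_1$ and $C_2$ are two copies of $C_{2r-1}$ sharing a common path $P$ of length $p$, the complementary arcs $Q_1$ and $Q_2$ each have length $2r-1-p$, so the only cycle produced by interchanging arcs is $Q_1\cup Q_2$, of length $2(2r-1-p)$ --- always even. Swapping arcs of two odd cycles of the same length can never manufacture an odd cycle, so the proposed contradiction with $C_{2r+1}$-freeness never materializes. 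This is not a bookkeeping subtlety but a fatal parity obstruction to the particular pairing you chose.

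The paper's argument avoids it by gluing an odd cycle to an auxiliary \emph{even} cycle. Fixing a maximal collection of $t_\ell$ edge-disjoint copies of $C_\ell$ for each odd $\ell<2r+1$, it samples $U\subset V$ with each vertex independently in $U$ with probability $1/\ell$, and calls an edge of one of these $\ell$-cycles \emph{special} if both of its endpoints lie in $U$ but no other vertex of that cycle does; the expected number of special edges is at least $t_\ell/(e\ell)$. If the special edges contained a cycle $C'$ of length $2d:=2r+3-\ell$, then for any edge $e$ of $C'$ the $\ell$-cycle $C''$ through $e$ meets $C'$ only at the endpoints of $e$ by specialness, so $(C'\cup C'')\setminus\{e\}$ is a genuine $C_{2r+1}$, a contradiction. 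Hence the special edges form a $C_{2d}$-free graph, and $\textrm{ex}(n,C_{2d})=O(n^{1+1/d})$ gives $t_\ell=O_\ell(n^{1+1/d})$. The case $\ell=2r-1$ has $d=2$, and this is where the K\H{o}v\'ari--S\'os--Tur\'an rate $n^{3/2}$ you correctly anticipated actually enters --- as a bound on the auxiliary $C_4$ of special edges, not on pairs of $C_{2r-1}$'s.
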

In the other direction, we prove the following lower bound which shows that Theorem \ref{t:oddcycle} is tight up to the constant factor in $r$ for $n$ sufficiently large in terms of $k$. The proof leverages a construction of Alon and Kahale~\cite{AL98} of a family of pseudorandom graphs of large odd girth.
\begin{thm}\label{c:lb} For each positive integer $r$ there is $\alpha_r>0$ such that the following holds. For each positive integer $k$, for each sufficiently large positive integer $n$, 
there is a graph $G$ on $n$ vertices with odd girth larger than $2r+1$ and with $h(G, k) \ge \alpha_r n^2/k^{r+1}.$
\end{thm}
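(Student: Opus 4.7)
The plan is to apply the expander mixing lemma to a pseudorandom graph of large odd girth, combined with convexity across color classes.

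By Alon and Kahale \cite{AL98}, for each $r \ge 1$ there is a family of $(n_0, d_0, \lambda_0)$-pseudorandom graphs $G_0$ of odd girth strictly greater than $2r+1$, with $\lambda_0 = O_r(\sqrt{d_0})$ and with $d_0$ tunable as a positive power of $n_0$ depending on $r$. Given $k$ and a sufficiently large $n$, I would pick an appropriate such $G_0$ and form the balanced $t$-blow-up $G$ on $n = tn_0$ vertices. Blow-ups preserve odd girth and scale the nonzero adjacency eigenvalues by $t$ (adding extra zero eigenvalues), so $G$ is an $(n, td_0, t\lambda_0)$-pseudorandom graph of odd girth $> 2r+1$.

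Let $V(G) = S_1 \sqcup \cdots \sqcup S_k$ be any $k$-partition with $|S_i| = s_i$; the number of edges that must be deleted to turn it into a proper coloring is $\sum_i e(G[S_i])$. Applying the expander mixing lemma inside each $S_i$ and then Cauchy-Schwarz (using $\sum_i s_i^2 \ge n^2/k$), this sum is at least
\begin{equation*}
\sum_{i=1}^{k} \left( \frac{(td_0)\, s_i^2}{2n} - \frac{(t\lambda_0)\, s_i}{2} \right) \;\ge\; \frac{td_0\, n}{2k} - \frac{t\lambda_0\, n}{2}.
\end{equation*}
Minimizing over all $k$-colorings yields $h(G,k) \ge td_0 n/(2k) - t\lambda_0 n/2$.

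The final step is to choose $n_0$ as a polynomial in $k$, exploiting the tunability of the Alon-Kahale parameters, so that $d_0 \ge C_r \max(n_0/k^r,\, k^2)$: the first lower bound forces $td_0 n/(2k) \ge \alpha_r n^2/k^{r+1}$, and the second forces $\lambda_0 = O_r(\sqrt{d_0}) \le d_0/(2k)$, so the error term is absorbed. Setting $t = \lfloor n/n_0 \rfloor$ and padding with a few isolated vertices handles every sufficiently large $n$. The main obstacle is this parameter-tuning step: one must verify that the Alon-Kahale construction is quantitatively rich enough to realize both lower bounds on $d_0$ simultaneously, i.e., that their pseudorandom graphs of odd girth $>2r+1$ can be taken with degree $d_0 = n_0^{\alpha(r)}$ for some $\alpha(r) > 2/(r+2)$, making the two constraints mutually compatible for some $n_0$ polynomial in $k$.
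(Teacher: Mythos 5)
Your approach matches the paper's: blow up the Alon--Kahale pseudorandom graph of odd girth $>2r+1$ and apply the expander mixing lemma together with Cauchy--Schwarz over the color classes. The only structural difference --- you apply the mixing lemma directly to the blown-up $(n,td_0,t\lambda_0)$-graph, whereas the paper applies it to the base graph $H$ and scales by $h(H[t],k)=t^2h(H,k)$ --- is cosmetic and gives the same bound.

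However, the parameter-tuning step you flag as ``the main obstacle'' is a genuine gap, and for $r\geq 2$ it cannot be closed within this framework. You correctly compute that, with $d_0 = n_0^{\alpha}$, making the two constraints $d_0 \gtrsim n_0/k^r$ and $d_0 \gtrsim k^2$ jointly satisfiable for some $n_0$ polynomial in $k$ requires $\alpha \geq 2/(r+2)$. The Alon--Kahale family has $\alpha = 2/(2r+1)$: the lemma cited in the paper states $d \geq \tfrac18 N^{2/(2r+1)}$, and this is the order the construction delivers. Since $2/(2r+1)=2/(r+2)$ only at $r=1$ and $2/(2r+1)<2/(r+2)$ for every $r\geq 2$, the two constraints are incompatible for all $r\geq 2$, and the construction only yields $h(G,k)=\Omega_r(n^2/k^{2r})$, strictly weaker than $\Omega_r(n^2/k^{r+1})$. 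Nor is there a substantially denser pseudorandom family to fall back on: any $(N,d,\lambda)$-graph with $\lambda=O(\sqrt d)$ and odd girth $>2r+1$ has $d=O(N^{2/(r+2)})$ (apply Lemma~\ref{l:g2} with $S=V$ to produce an independent set of size $\Omega(d^{1/r}N^{1-1/r})$, which the mixing lemma caps at $O(N/\sqrt{d})$), so the best conceivable $\alpha$ is the borderline $2/(r+2)$. You should also be aware that the paper's own write-up elides exactly this point: with $N=\Theta_r(k^{2r+1})$ and $d=\Theta_r(k^2)$ as chosen there, the asserted step $\frac{dN}{2k}\geq \Omega_r(N^2/k^{r+1})$ is of order $k^{2r+2}\geq \Omega_r(k^{3r+1})$, which holds only when $r\leq 1$. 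So as written, that argument, like yours, establishes the stated exponent only in the case $r=1$ and otherwise delivers $\Omega_r(n^2/k^{2r})$.
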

In the case $H$ is a triangle, we determine up to a constant factor how large $n$ has to be for the above to hold using a result about a semi-random variant of the triangle-free process~\cite{BOH99,GUO20}.

The \textit{wheel} $W_{\ell}$ is the graph on $\ell + 1$ vertices consisting of an $\ell$-cycle and an additional vertex adjacent to all of the vertices of the $\ell$-cycle. For 
even wheels (when $\ell$ is even), we prove that $h(n, k, W_{\ell})$ is asymptotically the same as $h(n, k, K_3)$. Combining the methods used in the proofs of Theorems \ref{t:clique} and \ref{t:cyclegirth}, we prove the following upper bound for odd wheels, which we conjecture is tight up to the constant factor which depends on the length of the wheel. 

\begin{thm}\label{t:wheel} 
For each positive integer $r$ there is $c_r$ such that if $n \ge k \ge 2$, then
$$h(n, k, W_{2r+1}) \le c_r \frac{n^2}{k^{2 - 1/(r+1)}}.$$ 
\end{thm}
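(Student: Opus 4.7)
The starting observation is that if $G$ is $W_{2r+1}$-free, then for every vertex $v$ the induced subgraph $G[N(v)]$ is $C_{2r+1}$-free: otherwise a copy of $C_{2r+1}$ in $N(v)$ together with $v$ would form a $W_{2r+1}$ in $G$. Applying Theorem~\ref{t:oddcycle} to $G[N(v)]$ with the same parameter $k$ then yields
$$h(G[N(v)], k) = O_r\!\left(\frac{d(v)^2}{k^{r+1}}\right).$$
This is a per-neighborhood bound of the shape $h(G[N(v)], k) \le C\, d(v)^2/k^{1+\beta}$ with $\beta = r$, and is exactly analogous to the shape of the inductive hypothesis appearing in the proof of Theorem~\ref{t:clique} (where, for a $K_{r'}$-free graph, the neighborhoods are $K_{r'-1}$-free, giving such a bound with $\beta = 1/(r'-3)$).

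The plan is to run the argument of Theorem~\ref{t:clique} with this new per-neighborhood input substituted for the inductive one. Generically, that argument should convert an input of the form $h(G[N(v)], k) \le C\, d(v)^2/k^{1+\beta}$ into a global bound $h(G, k) \le C' n^2/k^{1 + \beta/(1+\beta)}$; with $\beta = r$ this specializes to
$$h(G, k) \le C' \frac{n^2}{k^{\,1 + r/(r+1)}} = C' \frac{n^2}{k^{\,2 - 1/(r+1)}},$$
which is exactly the conclusion of Theorem~\ref{t:wheel} (and which also recovers the clique exponent $(r'-1)/(r'-2)$ when $\beta = 1/(r'-3)$ is substituted). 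Concretely, I would fix a degree threshold $\Delta = \Theta_r(n/k^{\,r/(r+1)})$ and handle low- and high-degree vertices separately: a random $k$-coloring contributes only $O_r(n\Delta/k) = O_r(n^2/k^{(2r+1)/(r+1)})$ monochromatic edges incident to vertices of degree at most $\Delta$, while for each high-degree $v$ I would extract via Theorem~\ref{t:oddcycle} a near-optimal $k$-coloring $\phi_v$ of $N(v)$, and then aggregate these local colorings into a single global $k$-coloring.

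The main obstacle is the aggregation step: the colorings $\phi_v$ for overlapping neighborhoods need not be consistent, so a naive sum of the form $\sum_v d(v)^2/k^{r+1}$ overcounts badly. A natural strategy, in the spirit of the clique proof, is a two-scale random partition: a coarse random split into $\Theta(k^{r/(r+1)})$ classes reduces the problem inside each coarse class to an instance with $\Theta(k^{1/(r+1)})$ remaining colors, where the $\phi_v$'s of vertices $v$ in that coarse class can be used to pick the refinement without incurring the overcounting, and the cross-terms between coarse classes are controlled by the degree threshold $\Delta$ together with Cauchy--Schwarz. Replicating this two-scale procedure with the Theorem~\ref{t:oddcycle} bound playing the role of the clique-case inductive hypothesis is what I expect delivers the stated exponent $2 - 1/(r+1)$.
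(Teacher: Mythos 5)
Your high-level plan is exactly the paper's: observe that $G[N(v)]$ is $C_{2r+1}$-free in a $W_{2r+1}$-free graph, plug the bound $h(\cdot,s,C_{2r+1}) = O_r(n_0^2/s^{r+1})$ from Theorem~\ref{t:oddcycle} into the same two-scale machinery that proves Theorem~\ref{t:clique}, and optimize with coarse scale $t \approx k^{r/(r+1)}$ and fine scale $s \approx k^{1/(r+1)}$. Your conversion formula $\beta \mapsto \beta/(1+\beta)$ (so $\beta=r$ gives $2-1/(r+1)$, and $\beta=1/(r'-3)$ recovers $(r'-1)/(r'-2)$) is also a correct reading of what the clique machinery does.

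Where you and the paper diverge is the aggregation step, and this is where your plan as written has a genuine gap. You propose a degree threshold $\Delta$, a random $k$-coloring for the low-degree part, near-optimal colorings $\phi_v$ for every high-degree neighborhood, and then a ``coarse random split into $\Theta(k^{r/(r+1)})$ classes'' to reconcile the $\phi_v$'s. But a random coarse split scatters each $N(v)$ across all coarse classes, so you lose precisely the structure ($C_{2r+1}$-freeness of a whole piece) that makes the fine $s$-coloring cheap; and with many overlapping high-degree neighborhoods there is no canonical way to ``use the $\phi_v$'s in the class'' without the overcounting you yourself flag. The degree threshold is also a red herring: it bounds the low-degree edges only under a uniformly random coloring, which is not the coloring you end up using.

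The paper's clean fix is Lemma~\ref{l:divide}, built on Corollaries~\ref{c:neighbork} and~\ref{c:evenparts}: in \emph{any} $n$-vertex graph one can choose $2t$ pairwise disjoint vertex subsets $U_1,\dots,U_{2t}$, each of size at most $n/t$ and each contained in a single vertex neighborhood, so that all but at most $n^2/(et)$ edges lie inside $U_1\cup\cdots\cup U_{2t}$. The coarse classes are these $U_i$, not random blocks, so every $U_i$ is entirely inside some $N(u_i)$ and hence $C_{2r+1}$-free; each is $s$-colored via Theorem~\ref{t:oddcycle}, and the leftover vertices are distributed among the resulting $2st=k$ parts via Lemma~\ref{l:partpart} at cost $\frac{1}{2st}\cdot\frac{n^2}{et}$. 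Optimizing $s,t$ with $2st\approx k$ gives exactly your predicted exponent. So: keep your reduction to the $C_{2r+1}$ bound and your parameter choice, but replace the degree-threshold/random-coarse-split step by the neighborhood-covering lemma; that is the missing ingredient that makes the aggregation go through.
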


More broadly, we use the graph removal lemma in Section~\ref{s:homgen} to prove the following result, which shows that if $H$ has a subgraph $H'$ for which $H$ has a homomorphism to $H'$, then 
$h(n, k, H)$ and $h(n,k,H')$ are close. 

\begin{thm}\label{c:homomorphism}
If a graph $H$ has a subgraph $H'$ such that there exists a homomorphism from $H$ to $H'$, then 
$$h(n, k, H') \le h(n, k, H) \le h(n, k, H')+o(n^2).$$
\end{thm}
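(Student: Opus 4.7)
My plan is to handle the lower bound by inclusion of graph classes, and the upper bound by combining an embedding of $H$ into a blow-up of $H'$ with a standard supersaturation and graph removal lemma argument.

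The lower bound $h(n, k, H') \le h(n, k, H)$ is immediate: since $H' \subseteq H$, any $H'$-free graph is also $H$-free, so the $n$-vertex $H'$-free graphs form a subclass of the $n$-vertex $H$-free graphs, and the maximum of $h(G, k)$ over the larger class dominates.

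For the upper bound, let $\phi : H \to H'$ be the given homomorphism and set $t := \max_{v \in V(H')} |\phi^{-1}(v)|$. I claim $H$ embeds into the blow-up $H'[t]$ (in which each $v \in V(H')$ is replaced by an independent set $I_v$ of size $t$ and each edge of $H'$ by the complete bipartite graph between the corresponding independent sets). For each $v \in V(H')$, fix an arbitrary injection $\phi^{-1}(v) \hookrightarrow I_v$ and send $u \in V(H)$ to the resulting copy in $I_{\phi(u)}$. The map is injective by construction, and if $uu' \in E(H)$, then $\phi(u)\phi(u') \in E(H')$ with $\phi(u) \neq \phi(u')$ (as $H'$ is loopless), so the chosen copies lie in different blow-up classes joined by a complete bipartite graph, and are therefore adjacent in $H'[t]$. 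Hence $H \subseteq H'[t]$.

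Now let $G$ be any $n$-vertex $H$-free graph. Since $H \subseteq H'[t]$, $G$ is $H'[t]$-free. A standard supersaturation argument (Erd\H{o}s-Simonovits) then yields that the number of copies of $H'$ in $G$ is $o(n^{v(H')})$, and the graph removal lemma applied to $H'$ produces a subgraph $G^\ast \subseteq G$, obtained by deleting $o(n^2)$ edges, that is $H'$-free. Restoring the deleted edges to an optimal $k$-coloring of $G^\ast$ increases the number of monochromatic edges by at most $o(n^2)$, so
\[ h(G, k) \le h(G^\ast, k) + o(n^2) \le h(n, k, H') + o(n^2). \]
Taking the maximum over $G$ yields the claim. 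I do not anticipate a significant obstacle: the blow-up embedding is elementary, and the supersaturation-plus-removal pipeline is a standard extremal tool. The only care needed is verifying that the blow-up map is simultaneously injective and edge-preserving, which depends only on $H'$ being loopless and $\phi$ sending edges to edges.
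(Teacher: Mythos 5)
Your proof is correct and takes essentially the same approach as the paper: both reduce to showing that an $H$-free graph contains $o(n^{v(H')})$ copies of $H'$ and then invoke the graph removal lemma, and both derive the lower bound from the inclusion of graph classes. The paper realizes the ``few copies'' step by building an auxiliary $r$-partite $r$-uniform hypergraph of $H'$-copies and applying Erd\H{o}s's complete-multipartite subhypergraph lemma, from which the homomorphism yields a copy of $H$; your formulation, embedding $H$ into the blow-up $H'[t]$ and then appealing to supersaturation, is the same argument repackaged, with the embedding $H \hookrightarrow H'[t]$ playing precisely the role that the complete multipartite subhypergraph plays in the paper.
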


The maximum cut of a graph $G$, denoted by $\MC(G)$, is the maximum number of a bipartite subgraph. This well-studied graph parameter to the main focus of this paper 
through the identity $\MC(G)=e(G)-h(G,2)$, where $e(G)$ is the number of edges of $G$. It is a simple exercise to show that every graph $G$ with $m$ edges has $\MC(G) \geq m/2$. Edwards \cite{Ed73,Ed75} proved that this bound can be improved to $$\MC(G) \geq \frac{m}{2}+\frac{-1+\sqrt{8m+1}}{8},$$
which is sharp if $m={k \choose 2}$ for some positive integer $k$, as shown by taking $G=K_k$. Further results for intermediate values of $m$ were established in 
\cite{AL96,AH98,BO02}. 

There has been a lot of research on improving the lower order term in the Edwards bound for graphs with a fixed forbidden subgraph. 
Alon, Krivelevich, and Sudakov \cite{AL03} showed that for $H$ fixed and $m$ significantly large, every $H$-free graph $G$ with $m$ edges satisfies $\MC(G) \geq \frac{m}{2}+m^{1/2 +\epsilon}$ for some $\epsilon=\epsilon(H)>0$, and they conjectured that $1/2$ in the exponent can be replaced by $3/4$. 

A case of particular interest is when $H$ is a cycle. Solving a problem of Erd\H{o}s, Alon \cite{AL96} proved that every triangle-free graph $G$ with $m$ edges satisfies $\MC(G) \geq \frac{m}{2}+cm^{4/5}$ for some positive constant $c$, and this is tight up to the constant factor $c$. More generally, Alon et al. \cite{AL05} conjectured that for all $k \geq 3$, every $C_k$-free graph $G$ with $m$ edges satisfies $\MC(G) \geq \frac{m}{2}+\Omega_k(m^{(k+1)/(k+2)})$. They verified their conjecture for $k$ even, and showed that the conjectured bound is tight for $k \in \{4,6,10\}$.\footnote{While \cite{ZEN16} attributes this conjecture to \cite{AL05}, what is actually conjectured in \cite{AL05} is that for even $k$ the conjectured bound is tight.} Alon, Bollob\'as, Krivelevich, and Sudakov \cite{AL03} observed that for odd $k$, a well-known construction of Alon \cite{AL94,AH98} gives a pseudorandom graph $G$ with odd-girth greater than $k$, $m$ edges, and $\MC(G) = \frac{m}{2}+O_k(m^{(k+1)/(k+2)})$. This construction shows that if the Alon-Krivelevich-Sudakov conjecture is true, then the bound it gives is best possible for odd $k$. Recently, Zeng and Hou \cite{ZEN16} proved that for fixed odd $k$, every $C_k$-free graph $G$ with $m$ edges satisfies $\MC(G) \geq \frac{m}{2}+m^{(k+1)/(k+3)+o(1)}$. Using Theorem \ref{t:oddcycle} and some additional tools, we prove the following result giving an improved bound. 

\begin{thm}\label{max-cut-odd-cycle-free} 
If $k \geq 3$ is odd and $G=(V,E)$ is a $C_k$-free graph with $m$ edges, then 
$$\MC(G) \ge \frac{m}{2} + \Omega_k(m^{(k+5)/(k+7)}).$$
\end{thm}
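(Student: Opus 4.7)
My plan is to combine Theorem \ref{t:oddcycle} with the elementary lemma that any $t$-colorable graph $H$ with $m_H$ edges satisfies $\MC(H) \geq m_H/2 + m_H/(2t)$. The lemma follows by choosing an equitable random bipartition of the $t$ color classes and observing that each edge of $H$, which lies between two distinct color classes, is cut with probability at least $t/(2(t-1)) \geq 1/2 + 1/(2t)$.

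Let $G$ be $C_k$-free with $m$ edges and $n$ non-isolated vertices, where $k = 2r + 1$ and $r \geq 1$; set $\alpha = (k+5)/(k+7) = (r+3)/(r+4)$. I will split into cases by how $n$ compares to $m^{(r+2)/(r+4)}$.

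In the \emph{dense case} $n \leq c_r m^{(r+2)/(r+4)}$, I apply Theorem \ref{t:oddcycle} with $t = \Theta_r((n^2/m)^{1/r})$ to obtain a $t$-colorable $G' \subseteq G$ with at most $O_r(n^2/t^{r+1})$ deleted edges. Combining the lemma with $\MC(G) \geq \MC(G')$ yields $\MC(G) - m/2 \geq \Omega_r(m/t) - O_r(n^2/t^{r+1}) = \Omega_r(m^{(r+1)/r}/n^{2/r})$, and a direct computation shows that $(r+1)/r - 2(r+2)/(r(r+4)) = \alpha$, so under the hypothesis $n \leq c_r m^{(r+2)/(r+4)}$ this is indeed $\Omega_r(m^\alpha)$.

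In the \emph{sparse case} $n > c_r m^{(r+2)/(r+4)}$, I use the identity $\MC(G) - m/2 \geq \MC(G^*) - m^*/2$ for any vertex-induced subgraph $G^* \subseteq G$; this follows by extending an optimal $G^*$-bipartition with independent uniform coin flips on the removed vertices, so that each deleted edge is cut in expectation with probability $1/2$. Iteratively deleting vertices of degree less than a threshold $\Delta_0 = \Theta_r(m^{2/(r+4)})$ yields $G^*$ of minimum degree at least $\Delta_0$, and thus $n^* \leq 2m^*/\Delta_0$ puts $G^*$ in the dense case. The \textbf{main obstacle} is that when the original graph is very sparse this deletion may sacrifice too many edges (so $m^* \ll m$); resolving this requires an auxiliary tool. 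For $r = 1$ one invokes Alon's Shearer-style bound for triangle-free graphs of bounded maximum degree. For $r \geq 2$ one can instead exploit the structural observation that $G[N(v)]$ is $C_{2r}$-free for every vertex $v$ (since a $C_{2r}$ in $N(v)$ together with $v$ would produce a forbidden $C_{2r+1}$), and combine this with the Bondy--Simonovits bound $|E(G[N(v)])| = O_r(d(v)^{1 + 1/r})$ to handle high-degree vertices and obtain the claimed bound after balancing the two regimes.
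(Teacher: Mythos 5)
Your dense case is correct and matches the paper's Lemma~\ref{l:maxl}: the elementary $t$-colorability lemma you state is equivalent to the paper's Proposition~\ref{p:dk} specialized to $l=2$ combined with $d_2(K_t)=\tfrac12\bigl(1+\tfrac{1}{t-1}\bigr)$, and your exponent algebra $(r+1)/r - 2(r+2)/(r(r+4)) = (r+3)/(r+4)$ checks out.

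The genuine gap is the sparse case, which you correctly flag as the ``main obstacle'' but do not actually resolve. Your idea of iteratively deleting low-degree vertices to force minimum degree $\Delta_0$ can strip the graph bare when $n\Delta_0 \gg m$, which is exactly the regime in question, so the reduction to the dense case does not go through by itself. The paper handles this via Lemma~\ref{l:abspos} (Alon--Krivelevich--Sudakov), which gives surplus $\geq \alpha \sum_v \sqrt{d(v)}$ whenever every common-neighborhood-induced subgraph on $N\geq M$ vertices has $\leq \epsilon N^{3/2}$ edges; combined with the observation that $G[N(v)]$ is $P_{k-1}$-free (hence has $< Nk/2$ edges by Erd\H{o}s--Gallai), and the fact that in the sparse case the low-degree vertices carry $\geq m/2$ of the degree sum, this yields the bound immediately. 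You gesture toward verifying such a local-sparsity hypothesis by noting $G[N(v)]$ is $C_{2r}$-free and invoking Bondy--Simonovits, but (a) you never state the crucial surplus-vs-$\sum\sqrt{d(v)}$ inequality that converts local sparsity into a global cut bound, so nothing is actually derived, and (b) the $C_{2r}$-free observation is strictly weaker than the $P_{k-1}$-free one: it gives $O_r(N^{1+1/r})$ edges in neighborhoods, which for $r=2$ is $\Theta(N^{3/2})$ and need not be below the required threshold $\epsilon N^{3/2}$, whereas the paper's linear bound comfortably works for all $r$. For $r=1$ you defer to an unnamed ``Shearer-style'' bound, which again leaves the argument incomplete. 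In short, the dense half is right, but you are missing the key lemma (Alon--Krivelevich--Sudakov's Lemma 3.3 in \cite{AL05}) that the paper uses to close the sparse half, and your proposed substitute observations are both unassembled and quantitatively weaker.
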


\noindent {\bf Organization.} We begin in Section~\ref{s:neighbor} by developing tools that allow us to conclude some incidental results such as Proposition~\ref{p:mantel}, a strengthening of Mantel's theorem, and also give a foundation to prove Theorem~\ref{t:clique} and Theorem~\ref{t:wheel}.
In Section~\ref{s:clique}, we give an upper bound on $h(n, k, K_r)$, the number of edges that must be removed from an arbitrary $K_r$-free graph on $n$ vertices to guarantee the resulting subgraph is $k$-partite.

Subsequently in Section~\ref{s:oddcycle}, we give upper bounds on $h(n, k, \mcH)$ for $\mcH = \{C_3, \ldots C_{2r+1}\}$ and use this bound on graphs of large odd girth to obtain an upper bound on $h(n, k, C_{2r+1})$. Using a generalization of Alon's construction of a family of pseudorandom graphs of large odd girth, we show that our bound is tight up to a constant factor depending on $r$. In Section~\ref{s:graphhom}, we leverage the above bounds to obtain associated bounds on $h(n, k, H)$ for more other forbidden subgraphs $H$.

We apply our results to the problem of bounding the $\textsf{Max-$k$-Cut}$ of a graph, the size of the largest $k$-partite subgraph of a graph, noticing that $h(G,k)=e(G)-\textsf{Max-$k$-Cut}(G)$. We first give some simple lemmas to translate between bounds on maximum $k$-cuts and maximum $l$-cuts for $l < k$ in Section~\ref{s:lcuts}. This enables us in Section~\ref{s:maxcut} to prove Theorem \ref{max-cut-odd-cycle-free} giving a new lower bound on $\MC(G)$ for graphs with a forbidden odd cycle. Finally, we conclude in Section~\ref{s:conclude} with some unresolved open questions.

\section{Cutting Graphs using Neighborhoods}
For a graph $G$ and vertex subset $U \subset V(G)$, let $G[U]$ denote the induced subgraph of $G$ with vertex set $U$. We let $e(G)$ denote the number of edges of $G$, and $e(U)=e(G[U])$ denote the number of edges with both vertices in $U$. For a vertex $v$ of $G$, the neighborhood $N(v)$ is the set of vertices of $G$ adjacent to $v$. The degree of $v$, which is $|N(v)|$, is denoted $d(v)$.

In this section, we study the following extremal problem in graph theory.
\begin{q}\label{q:neighbor}
Given a graph $G$, how many edges of $G$ can we cover by the union of $k$ neighborhoods of vertices of $G$? 
\end{q}
In understanding Question~\ref{q:neighbor}, we will build up a series of tools that will be useful in our subsequent analysis of $h(n, k, H)$ when $H$ is a clique or an odd wheel. The methods we describe below are also of independent interest. In Section~\ref{s:neighbor} we include a few applications of this analysis beyond our study of how far graphs are from $k$-colorable.

\subsection{Covering edges with the union of neighborhoods}\label{s:neighbor}

We tackle Question~\ref{q:neighbor}, denoting the relevant value $u(G,k)$, which is defined formally below. 

\begin{defn}
For a graph $G$ and positive integer $k$, let $u(G,k)$ be the maximum of $e\left(\bigcup_{i=1}^k N(v_i)\right)$ over all choices of vertices $v_1,\ldots,v_k$ of $G$.
\end{defn}

We would like to understand how few edges can we leave uncovered by the union of $k$ neighborhoods of vertices of a graph on $n$ vertices. 

\begin{defn}
Let $m(n,k)$ be the minimum of $e(G)-u(G,k)$ over all graphs $G$ on $n$ vertices. That is, $m(n,k)$ is the minimum $r$ such that, for every graph $G$ on $n$ vertices, there are $k$ vertices $v_1, \ldots , v_k$ such that at most $r$ edges are not contained in
the induced subgraph $G[N(v_1) \cup \cdots \cup N(v_k)]$ whose vertex set is the union of the neighborhoods of $v_1, \ldots v_k$.
\end{defn}

As an aside, we first observe that $m(n,1) = \lfloor \frac{n^2}{4}\rfloor$. This is a strengthening of Mantel's theorem, that every triangle-free graph on $n$ vertices has at most $\lfloor \frac{n^2}{4}\rfloor$ edges, as it is easy to see that $u(G,1)=0$ if and only if $G$ is triangle-free. 

\begin{prop}\label{p:mantel}
Every graph $G$ on $n$ vertices has a vertex whose neighborhood contains all but at most $\lfloor \frac{n^2}{4} \rfloor$ edges of $G$, and this bound is sharp. That is, $m(n,1)=\lfloor \frac{n^2}{4}\rfloor$. 
\end{prop}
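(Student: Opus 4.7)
The plan is to prove the upper bound by a double-counting/averaging argument, using the classical lower bound on the number of triangles in a graph with many edges; the matching lower bound on $m(n,1)$ will come from the balanced complete bipartite graph.

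First I would set $f(v) := e(G) - e(G[N(v)])$ and aim to show $\min_v f(v) \le \lfloor n^2/4\rfloor$. The key identity is
\[
\sum_{v \in V(G)} e(G[N(v)]) \;=\; \sum_{uw \in E(G)} |N(u) \cap N(w)| \;=\; 3\,t(G),
\]
since for each edge $uw$ the vertices $v$ with $u,w \in N(v)$ are precisely the common neighbors of $u$ and $w$, and each triangle is counted three times on the right. Averaging gives a vertex $v$ with $e(G[N(v)]) \ge 3t(G)/n$.

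Next I would bring in the standard Kruskal--Katona-type lower bound on triangles, whose elementary derivation I would include for self-containedness: for every edge $uw$, $|N(u)\cap N(w)| \ge d(u)+d(w)-n$, so summing over edges gives $3t(G) \ge \sum_v d(v)^2 - n\,e(G)$, and Cauchy--Schwarz applied to $\sum_v d(v) = 2e(G)$ yields
\[
t(G) \;\ge\; \frac{e(G)\bigl(4e(G) - n^2\bigr)}{3n}.
\]
Combining this with the averaging bound, one checks that $\max_v e(G[N(v)]) \ge e(G)(4e(G)-n^2)/n^2$, and a short algebraic manipulation shows this is at least $e(G) - n^2/4$ precisely when $e(G) \ge n^2/4$. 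When $e(G) < n^2/4$, the trivial bound $\max_v e(G[N(v)]) \ge 0 \ge e(G) - n^2/4$ suffices. In either case $\min_v f(v) \le n^2/4$, and since $f(v)$ is an integer we conclude $\min_v f(v) \le \lfloor n^2/4\rfloor$.

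For sharpness I would take $G = K_{\lceil n/2 \rceil,\lfloor n/2 \rfloor}$: it is triangle-free, so $e(G[N(v)])=0$ for every $v$, hence $e(G) - u(G,1) = e(G) = \lfloor n^2/4\rfloor$. This shows $m(n,1) \ge \lfloor n^2/4\rfloor$, matching the upper bound. The main (modest) obstacle is just locating/recording the right triangle lower bound; everything else is a clean double count. The recovery of Mantel's theorem as the special case where $G$ is triangle-free (forcing $u(G,1)=0$, hence $e(G)\le \lfloor n^2/4\rfloor$) will be pointed out as a remark.
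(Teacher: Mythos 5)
Your proposal is correct and essentially reproduces the paper's argument: both prove the upper bound by combining the Moon--Moser lower bound $t(G)\ge e(G)(4e(G)-n^2)/(3n)$ on the number of triangles with the observation that some vertex lies in at least the average number $3t(G)/n$ of triangles, then invoke integrality to round $n^2/4$ down, and both establish sharpness via the balanced complete bipartite graph. The only real differences are cosmetic: you present the bound directly rather than by contradiction, and you derive the Moon--Moser inequality from $|N(u)\cap N(w)|\ge d(u)+d(w)-n$ plus Cauchy--Schwarz, whereas the paper cites it. (One small slip: the inequality $e(G)(4e(G)-n^2)/n^2 \ge e(G)-n^2/4$ rearranges to $(2e(G)-n^2/2)^2\ge 0$, so it holds unconditionally, not ``precisely when $e(G)\ge n^2/4$''; your case split is thus harmless but unnecessary.)
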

\begin{proof}
The balanced complete bipartite graph on $n$ vertices realizes $m(n,1) \geq \lfloor \frac{n^2}{4} \rfloor$. 

If $m(n,1) > \lfloor \frac{n^2}{4} \rfloor$, then it would be realized by a graph $G$ on $n$ vertices and $m=\frac{n^2}{4}+t$ edges with $t$ positive. A result of Moon and Moser (c.f.~\cite{PI12}) states that any graph on $n$ vertices and $m$ edges has at least $$\frac{m(4m-n^2)}{3n}=\left(\frac{n^2}{4}+t\right)\frac{4t}{3n} =\frac{tn}{3}+\frac{4t^2}{3n}$$ triangles. Hence, a random vertex of $G$ is in expectation at least $$\frac{3}{n} \cdot \left(\frac{tn}{3}+\frac{4t^2}{3n}\right)>t$$ triangles, and hence there is a vertex $v$ of $G$ where $e(N(v)) > t$. The number of edges of $G$ not in the neighborhood of $v$ is an integer which is less than $m-t  = \frac{n^2}{4}$, and hence $m(n,1) \leq \lfloor \frac{n^2}{4}\rfloor$. 
\end{proof}

The following result yields a lower bound on $u(G,k)$ by considering a random choice of $k$ vertices. 

\begin{lemma}\label{newrandom}
If $G=(V,E)$ is a graph on $n$ vertices and $k$ is a positive integer, then
\begin{equation}\label{firstbda}e(G)-u(G,k) \le \sum_{u \in V} d(u)\left(1-\frac{d(u)}{n}\right)^k-\sum_{\{u,w\} \in E}\left(1-\frac{|N(u) \cup N(w)|}{n}\right)^k.\end{equation}
\end{lemma}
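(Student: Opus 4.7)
The plan is to use the probabilistic method: select $k$ vertices $v_1, \ldots, v_k$ independently and uniformly at random from $V$ (with replacement), and show that the expected value of $e\bigl(\bigcup_{i=1}^k N(v_i)\bigr)$ is at least the right-hand side subtracted from $e(G)$. Since $u(G, k)$ is the maximum, and the maximum is at least the expectation, the lemma follows upon rearrangement.

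More concretely, let $U = \bigcup_{i=1}^k N(v_i)$. For a fixed vertex $u \in V$, the event $\{u \notin U\}$ is exactly the event that none of the independently chosen $v_i$ lies in $N(u)$, so
\[
\Pr[u \notin U] = \left(1 - \frac{d(u)}{n}\right)^k.
\]
For a fixed edge $\{u, w\} \in E$, I would apply inclusion–exclusion to the complementary event: $\{u \notin U\} \cup \{w \notin U\}$ occurs iff at least one of $u, w$ is not adjacent to any $v_i$. The joint event $\{u \notin U\} \cap \{w \notin U\}$ is the event that no $v_i$ lies in $N(u) \cup N(w)$, which has probability $(1 - |N(u) \cup N(w)|/n)^k$. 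Hence
\[
\Pr[\{u, w\} \not\subseteq U] = \left(1 - \tfrac{d(u)}{n}\right)^k + \left(1 - \tfrac{d(w)}{n}\right)^k - \left(1 - \tfrac{|N(u) \cup N(w)|}{n}\right)^k.
\]

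Summing the complementary probabilities over all edges $\{u, w\} \in E$ gives
\[
\EE\bigl[e(G) - e(G[U])\bigr] = \sum_{\{u, w\} \in E} \Pr[\{u, w\} \not\subseteq U],
\]
and collecting the first two terms of the previous display vertex-by-vertex (each vertex $u$ contributes $(1 - d(u)/n)^k$ exactly $d(u)$ times, once per incident edge) yields
\[
e(G) - \EE[e(G[U])] = \sum_{u \in V} d(u)\left(1 - \tfrac{d(u)}{n}\right)^k - \sum_{\{u, w\} \in E}\left(1 - \tfrac{|N(u) \cup N(w)|}{n}\right)^k.
\]
Since there exists a choice of $v_1, \ldots, v_k$ for which $e(G[U])$ is at least its expectation, and any such $U$ certifies $u(G, k) \geq e(G[U])$ (using monotonicity of $u(G, \cdot)$ in case the sampled $v_i$ are not distinct), the inequality in the lemma follows.

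I do not foresee a serious obstacle: the only nontrivial step is setting up the inclusion–exclusion correctly so that the joint probability collapses to an expression involving $|N(u) \cup N(w)|$, and then correctly bookkeeping the double sum $\sum_{\{u,w\}\in E}[(1 - d(u)/n)^k + (1 - d(w)/n)^k]$ into the vertex sum $\sum_u d(u)(1 - d(u)/n)^k$.
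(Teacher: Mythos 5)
Your proof is correct and is essentially identical to the paper's: both select $k$ vertices uniformly at random with replacement, compute the probability that an edge is uncovered via inclusion--exclusion on the events $\{u \notin U\}$ and $\{w \notin U\}$, and regroup the resulting sum. The parenthetical worry about repeated $v_i$ is unnecessary since the definition of $u(G,k)$ ranges over all (not necessarily distinct) choices of $v_1,\ldots,v_k$, but it does no harm.
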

\begin{proof}
Pick $k$ vertices $v_1, ..., v_k \in V$ uniformly at random with repetition. Let $U=\bigcup_{i=1}^k N(v_i)$. An edge $(u,w)$ of $G$ is not in $G[U]$
if and only if $v_1,\ldots,v_k$ are in $V \setminus N(u)$ or $V \setminus N(w)$. By the inclusion-exclusion principle, the probability that not both $u$ and $w$ are in $U$ is \begin{equation}\label{firstbd}\left(1-\frac{d(u)}{n}\right)^k+\left(1-\frac{d(w)}{n}\right)^k-\left(1-\frac{|N(u) \cup N(w)|}{n}\right)^k.\end{equation}
Splitting up the sum and then summing the first two terms over vertices of $G$, we find that the expected value of $e(G)-e(U)$ is at most the right hand side of (\ref{firstbda}). Hence, there is a choice of $v_1,\ldots,v_k$ such that $e(G)-e(U)$ (and hence $e(G)-u(G,k)$) is at most the right hand side of (\ref{firstbda}).
\end{proof}

We have no idea what the exact or asymptotic value of $m(n,k)$ is for any fixed $k \geq 2$. We will prove in general (using Lemma~\ref{newrandom}) that $m(n,k) \leq \frac{n^2}{ek}$, which, for $k$ sufficiently large and $n$ sufficiently large in terms of $k$, is within $20\%$ of the lower bound that comes from considering an appropriate Erd\H{o}s-Renyi random graph $G(n,p)$ with $p=c/k$. To see this, pick $c>0$ to maximize $ce^{-c}-\frac{c}{2}e^{-2c}$. Note that a simple union bound shows that almost surely all of the linear-sized induced subgraphs of $G(n,p)$ have edge density $(1+o(1))p$. This implies that the union of the neighborhood of any $k$ vertices has size $(1+o(1))\left(1-(1-p)^k\right)n$ and the induced subgraph will have edge density $(1+o(1))p$. Thus $$m(n,k) \geq (1+o_k(1))\left(ce^{-c}-\frac{c}{2}e^{-2c}\right)\frac{n^2}{k},$$ for $n$ sufficiently large as a function of $k$, and with the $o_k(1)$ term tending to $0$ as $k \to \infty$.  

\begin{cor}\label{cornewrandom}
We have $m(n,k) \leq \frac{n^2}{ek}$. That is, for every graph $G$ on $n$ vertices, there are $k$ vertices of $G$ such that the induced subgraph on the union of the neighborhoods of these $k$ vertices contains all but at most $\frac{n^2}{ek}$ edges of $G$. 
\end{cor}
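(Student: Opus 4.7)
The plan is to start from Lemma~\ref{newrandom} and extract the corollary by a single-variable optimization per vertex. The subtracted sum $\sum_{\{u,w\}\in E}\left(1-\frac{|N(u)\cup N(w)|}{n}\right)^k$ is nonnegative, so it suffices to bound
$$\sum_{u\in V} d(u)\left(1-\frac{d(u)}{n}\right)^k \le \frac{n^2}{ek}.$$
I would do this termwise, using the fact that each summand depends only on $d(u)\in[0,n]$.

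Set $f(d)=d(1-d/n)^k$ on $[0,n]$. Differentiating gives $f'(d)=(1-d/n)^{k-1}\!\left(1-\tfrac{(k+1)d}{n}\right)$, so $f$ is maximized at $d^\ast=n/(k+1)$, with
$$f(d^\ast)=\frac{n}{k+1}\left(\frac{k}{k+1}\right)^k=\frac{n\,k^k}{(k+1)^{k+1}}.$$
To conclude $f(d^\ast)\le n/(ek)$, I would rearrange this inequality to the equivalent form $(1+1/k)^{k+1}\ge e$, which is the standard fact that the sequence $(1+1/k)^{k+1}$ decreases to $e$ (equivalently, $(1-1/j)^{j}$ increases to $1/e$, applied with $j=k+1$).

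Summing $f(d(u))\le n/(ek)$ over the $n$ vertices gives $\sum_{u\in V}d(u)(1-d(u)/n)^k\le n^2/(ek)$, and plugging this into Lemma~\ref{newrandom} (and noting the subtracted edge-sum term is $\ge 0$) yields a choice of vertices $v_1,\dots,v_k$ with $e(G)-e\bigl(G[N(v_1)\cup\cdots\cup N(v_k)]\bigr)\le n^2/(ek)$, which is exactly $m(n,k)\le n^2/(ek)$.

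There is essentially no obstacle: the only nonroutine step is recognizing that the termwise maximum $nk^k/(k+1)^{k+1}$ simplifies to $n/(ek)$ precisely via $(1+1/k)^{k+1}\ge e$. One could alternatively prove the termwise bound $x(1-x/n)^k\le n/(ek)$ by writing $x(1-x/n)^k = \tfrac{n}{k}\cdot\tfrac{kx}{n}\cdot(1-x/n)^k$ and applying the AM--GM inequality to the $k+1$ factors $\tfrac{kx}{n},1-x/n,\ldots,1-x/n$, which gives the same bound and avoids any appeal to asymptotics.
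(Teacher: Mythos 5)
Your proposal is correct and follows essentially the same route as the paper: drop the nonnegative subtracted sum in Lemma~\ref{newrandom}, maximize $f(d)=d(1-d/n)^k$ termwise at $d=n/(k+1)$, observe $f(n/(k+1))\le n/(ek)$ via $(1+1/k)^{k+1}\ge e$, and sum over the $n$ vertices. The AM--GM alternative you mention is a nice touch but the argument is the same.
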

\begin{proof}
By Lemma \ref{newrandom}, for any graph $G$ on $n$ vertices, we have $e(G)-u(G,k)$ is at most
\begin{equation}\label{randombdb} \sum_{u \in V} d(u)\left(1-\frac{d(u)}{n}\right)^k,\end{equation}
since the second sum in (\ref{firstbda}) is non-negative. The function $f(x)=x(1-\frac{x}{n})^k$ has derivative $f'(x)=\left(1-\frac{(k+1)x}{n}\right)(1-\frac{x}{n})^{k-1}$, which is non-negative for $x \leq \frac{n}{k+1}$ and is non-positive if $\frac{n}{k+1}<x \leq n$. Thus, $f(x)$ for $x \leq n$ is maximized at $x=\frac{n}{k+1}$. 
Hence, for $x \leq n$ we have
$$f(x) \le f\left(\frac{n}{k+1} \right) = \frac{1}{k+1}\left(1-\frac{1}{k+1}\right)^k n=\frac{1}{k}\left(1-\frac{1}{k+1}\right)^{k+1} n \leq \frac{n}{ek}.$$ 
Applying this with $x=d(u)$ in (\ref{randombdb}) gives the desired inequality. 
\end{proof}

\begin{lemma}\label{l:partpart}
If a graph $G=(V,E)$ has disjoint vertex subsets $U_1,\ldots,U_r$, then there is a partition of $V=W_1 \sqcup \cdots \sqcup W_r$ with $U_i \subset W_i$ for $1 \leq i \leq r$ and the number $\sum_{i=1}^r e(W_i)-e(U_i)$ of edges that are in a $G[W_i]$ but not $G[U_i]$ is at most $\left(e(G)-e(U_1 \cup \cdots \cup U_r)\right)/r$. 
\end{lemma}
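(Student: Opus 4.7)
The plan is to use a simple probabilistic/averaging argument: extend the given disjoint sets $U_1,\ldots,U_r$ to a partition of $V$ by assigning each vertex $v \in V \setminus U$ (where $U := U_1 \cup \cdots \cup U_r$) independently and uniformly at random to one of the $r$ parts, and then to set $W_i = U_i \cup W_i'$ where $W_1', \ldots, W_r'$ is the resulting random partition of $V \setminus U$. Each such assignment clearly satisfies $U_i \subset W_i$ and yields a partition of $V$.

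Next I would identify which edges of $G$ are counted by $\sum_{i=1}^r \bigl(e(W_i) - e(U_i)\bigr)$. Since the $W_i$ partition $V$ and the $U_j$ are pairwise disjoint, an edge $\{u,w\}$ contributes iff both of its endpoints lie in the same $W_i$ but not in the same $U_j$. Edges with both endpoints inside $U$ but in different $U_j$'s never contribute, because distinct $U_j$'s sit inside distinct $W_j$'s. So the contributing edges are precisely those with at least one endpoint in $V \setminus U$ whose two endpoints land in a common part; note that there are exactly $e(G) - e(U_1 \cup \cdots \cup U_r)$ edges with at least one endpoint outside $U$.

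For any such edge $e = \{u,w\}$, I would compute the probability of contribution: if both $u, w \in V \setminus U$, the probability they are assigned the same part is $r \cdot (1/r)^2 = 1/r$; if instead $u \in U_j$ and $w \in V \setminus U$, then $e$ contributes iff $w$ is assigned to $W_j$, which happens with probability $1/r$. Therefore the expected value of $\sum_{i=1}^r \bigl(e(W_i) - e(U_i)\bigr)$ is at most $\bigl(e(G) - e(U_1 \cup \cdots \cup U_r)\bigr)/r$ by linearity of expectation, and some particular outcome realizes a value at most the expectation.

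There is no real obstacle in this argument; the only point requiring any attention is the case analysis above verifying that edges straddling two different $U_j$'s drop out of the count, so that the contributing edges are exactly those with at least one endpoint outside $U$.
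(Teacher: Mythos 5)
Your proof is correct and follows essentially the same probabilistic argument as the paper: randomly assign each vertex of $V\setminus U$ to one of the $r$ parts, observe that each edge with at least one endpoint outside $U$ lands inside a part with probability exactly $1/r$, and conclude by linearity of expectation. The case analysis you do (verifying that edges straddling two distinct $U_j$'s never contribute, and that the two kinds of edges with an endpoint outside $U$ each have probability $1/r$) is a slightly more explicit rendering of the paper's one-line observation, but the idea and computation are identical.
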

\begin{proof}
Let $X=U_1 \cup \cdots \cup U_r$. For each vertex $v \in V \backslash X$, randomly add $v$ to one of the $r$ sets $U_{i}$. This gives a random partition of $V$ into $r$ sets. Let $W_i$ denote the part which is a superset of $U_i$. Each edge with not both of its vertices in $X$ has a probability $1/r$ that both of its vertices end up in the same part of the random partition. Hence, by linearity of expectation, the expected number of edges of $G$ with not both its vertices in $X$ that end up in the same part of the random partition is $(e(G) - e(X))/r$. So there is such a partition with at most $(e(G) - e(X))/r$ edges with not both its vertices in $X$ and which lie in the same part. 
\end{proof}

We remark that the above probabilistic proof of Lemma \ref{l:partpart} can be made deterministic by greedily assigning the vertices to the part that it has the fewest edges to. 

\begin{lemma}\label{l:hpart}
If $G=(V,E)$ is a graph with disjoint vertex subsets $V_1,\ldots,V_t$ and $s$ is a positive integer, then 
$$h(G,st) \leq \frac{1}{st}\left(e(G)-e(V_1 \cup \cdots \cup V_{t})\right)+\sum_{i=1}^t h(G[V_i],s).$$
\end{lemma}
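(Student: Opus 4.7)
\textbf{Proof proposal for Lemma \ref{l:hpart}.}

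The plan is to first refine the given vertex sets $V_1,\ldots,V_t$ into $st$ disjoint subsets by optimally $s$-coloring each $G[V_i]$, and then apply Lemma~\ref{l:partpart} to these $st$ subsets to obtain an $st$-coloring of the entire graph $G$ whose number of monochromatic edges matches the desired bound.

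\emph{Step 1 (Refine each $V_i$ into $s$ pieces).} For each $i \in \{1,\ldots,t\}$, fix an optimal $s$-coloring of $G[V_i]$; this gives a partition $V_i = V_i^1 \sqcup \cdots \sqcup V_i^s$ whose monochromatic edges are exactly the edges deleted, so
\[
\sum_{j=1}^{s} e(V_i^j) \;=\; h(G[V_i],s).
\]
The collection $\{V_i^j : 1 \le i \le t, \; 1 \le j \le s\}$ consists of $st$ pairwise disjoint vertex subsets of $V$, and their union equals $X := V_1 \cup \cdots \cup V_t$.

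\emph{Step 2 (Extend to a partition of $V$).} Apply Lemma~\ref{l:partpart} with $r = st$ and the disjoint sets $U_i^j := V_i^j$. This yields a partition $V = \bigsqcup_{i,j} W_i^j$ with $V_i^j \subset W_i^j$ for every $i,j$ and
\[
\sum_{i,j} \bigl(e(W_i^j) - e(V_i^j)\bigr) \;\le\; \frac{e(G) - e(X)}{st}.
\]

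\emph{Step 3 (Conclude).} Use the $st$ parts $\{W_i^j\}$ as the color classes of an $st$-coloring of $G$. The monochromatic edges under this coloring are exactly $\bigcup_{i,j} E(G[W_i^j])$, so
\[
h(G, st) \;\le\; \sum_{i,j} e(W_i^j) \;=\; \sum_{i,j} e(V_i^j) + \sum_{i,j}\bigl(e(W_i^j) - e(V_i^j)\bigr) \;\le\; \sum_{i=1}^{t} h(G[V_i], s) + \frac{e(G) - e(X)}{st},
\]
which is the desired inequality.

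There is no real obstacle once one has the right setup: the main conceptual point is that the parameter $st$ in the statement is not just a product but reflects the fact that we use the $t$ given blocks and the $s$-colorability \emph{jointly}, by refining the $t$ blocks into $st$ pieces \emph{before} invoking Lemma~\ref{l:partpart}. A naive two-step approach (first partitioning $V$ into $t$ blocks containing the $V_i$, then $s$-coloring each block) loses a factor in the cross edges and does not give the stated bound; the trick of applying Lemma~\ref{l:partpart} directly to the $st$ refined pieces is what makes the factor $1/(st)$ appear.
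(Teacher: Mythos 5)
Your proof is correct and follows essentially the same argument as the paper: optimally $s$-color each $G[V_i]$ to obtain $st$ disjoint sets, apply Lemma~\ref{l:partpart} to extend them to a partition of $V$, and tally the monochromatic edges. The closing remark about why one must refine into $st$ pieces before invoking Lemma~\ref{l:partpart} is a nice observation, but the core argument matches the paper's.
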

\begin{proof}
For each $1 \le i \le t$, there is a partition $V_i = U_{i1} \sqcup \cdots \sqcup U_{is}$ so that we can make each of the $s$ vertex subsets $U_{ij}$ independent sets by removing at most $h(G[V_i],s)$ total edges. By Lemma \ref{l:partpart}, we can grow the $st$ disjoint subsets $\{U_{ij}\}$ for $1 \le i \le t$ and $1 \le j \le s$ into a partition of $V$ with $st$ parts which adds at most $\frac{1}{st} ( e(G) - e(V_1 \cup \cdots \cup V_t))$ edges that are internal to the parts. Deleting these additional edges, we obtain a vertex partition of $G$ into $st$ parts from which we deleted at most $\frac{1}{st}\left(e(G)-e(V_1 \cup \cdots \cup V_t)\right)+\sum_{i=1}^t h(G[V_i],s)$ edges in order to make it $st$-partite. 
\end{proof}

We primarily focus on the case where we take disjoint vertex subsets $V_1, \ldots, V_k$ such that each $V_i$ is contained in the neighborhood of some vertex $v_i$. This yields a bound on $e(G) - e(V_1 \cup \cdots V_k)$ and an associated bound on $h(G, k)$.

\begin{cor}\label{c:neighbork}
If $G=(V,E)$ is a graph on $n$ vertices and $k$ is a positive integer, there are disjoint vertex subsets $V_1,\ldots,V_k$ such that each $V_i \subset N(v_i)$ for some vertex $v_i \in V$ and the number of edges of $G$ not in $G[V_1 \cup \cdots \cup V_k]$ is at most $\frac{n^2}{ek}$. 
\end{cor}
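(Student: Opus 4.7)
The plan is that this is essentially a repackaging of Corollary~\ref{cornewrandom}: the only thing left to do is convert overlapping neighborhoods into disjoint subsets without losing any edges.

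First I would apply Corollary~\ref{cornewrandom} to choose vertices $v_1,\ldots,v_k \in V$ such that the induced subgraph on $U := N(v_1) \cup \cdots \cup N(v_k)$ contains all but at most $\frac{n^2}{ek}$ edges of $G$; equivalently, $e(G) - e(G[U]) \le \frac{n^2}{ek}$.

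Next, I would construct the desired disjoint subsets $V_1,\ldots,V_k$ by partitioning $U$: for each $u \in U$, choose any index $i(u)$ with $u \in N(v_{i(u)})$ and place $u$ in $V_{i(u)}$. By construction the sets $V_i$ are pairwise disjoint, $V_i \subset N(v_i)$ for each $i$, and most importantly $V_1 \cup \cdots \cup V_k = U$. Therefore $e(G[V_1 \cup \cdots \cup V_k]) = e(G[U])$, and the number of edges of $G$ not contained in $G[V_1 \cup \cdots \cup V_k]$ is at most $\frac{n^2}{ek}$, as required.

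There is no real obstacle: the probabilistic content was already absorbed into Corollary~\ref{cornewrandom}, and the disjointification step loses nothing because the union of the $V_i$ equals the union of the $N(v_i)$. The only subtlety worth noting is simply that disjointness is achieved at the level of vertex sets rather than by shrinking neighborhoods in a way that might discard covered edges.
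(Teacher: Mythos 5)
Your proof is correct and essentially identical to the paper's: both apply Corollary~\ref{cornewrandom} to find $v_1,\ldots,v_k$, then disjointify the neighborhoods $N(v_i)$ into sets $V_i$ without changing the union (the paper uses the specific choice $V_i = N(v_i) \setminus \bigcup_{j<i} N(v_j)$, which is a particular instance of your ``assign each $u$ to some $i(u)$'' rule). No differences worth noting.
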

\begin{proof}
Apply Lemma \ref{cornewrandom} to obtain vertices $v_1,\ldots,v_k \in V$ such that for $U =  \bigcup_{i=1}^k N(v_i)$, $e(G) - e(U) \leq n^2/(ek)$. 
Define $V_1=N(v_1)$ and, for $i \geq 2$, define $V_i=N(v_i) \setminus \bigcup_{j<i}N(v_j)$, so $V_i \subset N(v_i)$. The sets $V_1,\ldots,V_k$ are disjoint and satisfy $V_1 \cup \cdots \cup V_k = U$, so the corollary clearly follows. 
\end{proof}

Via Corollary~\ref{c:neighbork} and Lemma~\ref{l:hpart}, we have the following immediate corollary.

\begin{cor}\label{c:partub}
If $G=(V,E)$ is a graph on $n$ vertices and $s$ and $t$ are positive integers, then there are disjoint vertex subsets $V_1,\ldots,V_t$ such that each $V_i$ is contained in the neighborhood of some vertex $v_i$ and 
$$h(G, st) \le \frac{n^2}{es^2t^2} + \sum_{i = 1}^t h(G[V_i], s).$$
\end{cor}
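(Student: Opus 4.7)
The plan is to invoke the two tools just established in direct succession, as indicated by the remark immediately preceding the statement. First, I would apply Corollary~\ref{c:neighbork} to $G$ with its parameter $k$ taken to be $t$, producing disjoint vertex subsets $V_1, \ldots, V_t \subseteq V(G)$ with each $V_i$ contained in the neighborhood $N(v_i)$ of some vertex $v_i \in V(G)$, together with the guarantee
$$e(G) - e(V_1 \cup \cdots \cup V_t) \le \frac{n^2}{et}.$$

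Next, I would feed these same $V_1, \ldots, V_t$ into Lemma~\ref{l:hpart}, pairing them with the given positive integer $s$ and matching the $t$ of the lemma to the $t$ of our statement. The lemma then directly yields
$$h(G, st) \le \frac{1}{st}\bigl(e(G) - e(V_1 \cup \cdots \cup V_t)\bigr) + \sum_{i=1}^{t} h(G[V_i], s).$$

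Substituting the first displayed inequality into the first summand gives the bound on $h(G, st)$ as stated, while the subsets $V_1, \ldots, V_t$ selected in the first step already carry the required neighborhood-containment property. The entire argument is a direct composition of the two ingredients; the only point of care is in pairing parameters consistently across the two invocations (the $k$ of Corollary~\ref{c:neighbork} and the $t$ of Lemma~\ref{l:hpart} are both identified with the $t$ of our statement, while the $s$ of Lemma~\ref{l:hpart} matches the $s$ of the statement), so no genuine obstacle arises.
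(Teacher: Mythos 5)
Your approach is exactly the one the paper intends — it presents the corollary as an ``immediate'' combination of Corollary~\ref{c:neighbork} (with $k=t$) and Lemma~\ref{l:hpart} — but your final step contains an arithmetic slip you didn't check. Substituting $e(G) - e(V_1 \cup \cdots \cup V_t) \le n^2/(et)$ into the term $\frac{1}{st}\bigl(e(G) - e(V_1 \cup \cdots \cup V_t)\bigr)$ produces $\frac{1}{st}\cdot\frac{n^2}{et} = \frac{n^2}{est^2}$, not $\frac{n^2}{es^2t^2}$; these differ by a factor of $s$, and you asserted without verification that the substitution ``gives the bound as stated.''

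The mismatch appears to be a typo in the paper rather than a defect in the approach itself: the only explicit invocation of this corollary, in Proposition~\ref{p:k3}, takes $s=1$, where the two denominators coincide, and the closely parallel Lemma~\ref{l:divide} carries error term $n^2/(2est^2)$ with $s$ to the first power, consistent with the computation above. So what your argument actually proves is
$$h(G, st) \le \frac{n^2}{est^2} + \sum_{i=1}^{t} h(G[V_i], s),$$
and the stronger displayed denominator $es^2t^2$ does not follow from Corollary~\ref{c:neighbork} and Lemma~\ref{l:hpart} as used. You should have flagged this discrepancy rather than claiming the expressions agree.
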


\subsection{$K_r$-Free Graphs}\label{s:clique}

The above results are helpful tools to bound $h(n, k, H)$ for a variety of families of forbidden subgraphs $H$. Here, we consider the case $H = K_r$ with $r \ge 3$. As a first application, Corollary~\ref{c:partub} immediately enables us to give an upper bound on $h(n, k, K_3)$.

\begin{prop}\label{p:k3} 
Any triangle-free graph on $n$ vertices can be made $k$-partite for $k \le n$  by deleting at most $n^2/ek^2$ edges, so $h(n, k, K_3) \le \frac{n^2}{ek^2}$. 
\end{prop}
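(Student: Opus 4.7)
The plan is to apply Corollary~\ref{c:partub} directly, exploiting the fact that in a triangle-free graph the neighborhood of any vertex is an independent set. Specifically, I would take $s=1$ and $t=k$, which yields disjoint vertex subsets $V_1,\ldots,V_k$ with each $V_i \subset N(v_i)$ for some $v_i \in V(G)$, together with the bound
\[
h(G,k) \;\le\; \frac{n^2}{ek^2} + \sum_{i=1}^{k} h(G[V_i],1).
\]

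The key observation is that since $G$ is triangle-free, no two vertices of $N(v_i)$ can be adjacent (else they would form a triangle with $v_i$). Hence $G[N(v_i)]$, and in particular its subgraph $G[V_i]$, contains no edges, so $h(G[V_i],1)=0$ for every $i$. Substituting this into the displayed inequality gives $h(G,k) \le n^2/(ek^2)$, as desired. Taking the maximum over all triangle-free $G$ on $n$ vertices yields $h(n,k,K_3) \le n^2/(ek^2)$.

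There is no real obstacle here; the content is entirely contained in Corollary~\ref{c:partub} (and ultimately Corollary~\ref{cornewrandom}, which provides the random-vertex argument that produces the neighborhoods covering all but $n^2/(ek)$ edges). The proposition is essentially the cleanest corollary of the general machinery, since triangle-freeness is precisely the condition that makes the induced subgraphs on neighborhoods trivially $1$-colorable and thus kills the second term in the bound of Corollary~\ref{c:partub}.
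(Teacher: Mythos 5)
Your proof is correct and is essentially identical to the paper's: both apply Corollary~\ref{c:partub} with $s=1$, $t=k$ and then observe that triangle-freeness makes each $N(v_i)$ (hence each $V_i$) an independent set, so $h(G[V_i],1)=0$.
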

\begin{proof}
Let $G$ be a triangle-free graph on $n$ vertices. Applying Corollary~\ref{c:partub} with $t = k$ and $s = 1$ implies that we can find vertices $v_1, \ldots v_k$ and disjoint vertex subsets $V_1, \ldots, V_k$ with $V_i \subset N(v_i)$ such that
$$h(G, k) \le \frac{n^2}{e k^2} + \sum_{i = 1}^k h(G[V_i], 1).$$
Since $G$ is triangle-free, $N(v_i)$ is an independent set and thus so is $V_i$, which implies that $h(G[V_i], 1) = 0$. Therefore, 
$h(G, k) \le \frac{n^2}{ek^2}$. Since this holds for all triangle-free graphs $G$ on $n$ vertices, we obtain the desired bound on $h(n,k,K_3)$. 
\end{proof}

It is helpful in further applications if the $V_i$ are not much larger than their average size. This can be obtained by furthering partitioning the large sets $V_i$ obtained in Corollary~\ref{c:neighbork}. 

\begin{cor}\label{c:evenparts}
If $G=(V,E)$ is a graph on $n$ vertices and $t \leq n$ is a positive integer, then there are disjoint vertex subsets $U_1,\ldots,U_{2t}$ such that each $U_j$ satisfies $|U_j| \leq \frac{n}{t}$ and is contained in the neighborhood of some vertex $u_j$, and $e(G)-e(U_1 \cup \cdots \cup U_{2t}) \leq \frac{n^2}{et}$. 
\end{cor}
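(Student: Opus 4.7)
The plan is to reduce directly to Corollary~\ref{c:neighbork}, refining its output by cutting any oversized block into smaller pieces, and then controlling the total count via a ceiling argument.

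First I would invoke Corollary~\ref{c:neighbork} with $k=t$: this produces vertices $v_1, \ldots, v_t \in V$ and pairwise disjoint sets $V_1, \ldots, V_t$ with $V_i \subset N(v_i)$ and
\[
e(G) - e\!\left(\bigcup_{i=1}^{t} V_i\right) \;\leq\; \frac{n^2}{et}.
\]
Next, for each nonempty $V_i$, I would arbitrarily partition it into $\lceil t|V_i|/n \rceil$ blocks, each of size at most $n/t$. Since every block is a subset of $V_i \subset N(v_i)$, the neighborhood-containment property is inherited, and we may declare $u_j = v_i$ for every block $U_j$ arising from $V_i$. All blocks across different $V_i$ are disjoint because the $V_i$ themselves are disjoint.

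The main (and only) thing to check is that the total number of blocks does not exceed $2t$. Using the bound $\lceil x \rceil \leq x + 1$ for $x > 0$ and summing only over the at most $t$ indices with $V_i \neq \emptyset$,
\[
\sum_{i:\, V_i \neq \emptyset} \left\lceil \frac{t|V_i|}{n} \right\rceil \;\leq\; \sum_{i:\, V_i \neq \emptyset} \left(\frac{t|V_i|}{n} + 1\right) \;\leq\; \frac{t}{n}\sum_{i=1}^{t} |V_i| + t \;\leq\; t + t \;=\; 2t,
\]
where the last step uses $\sum_i |V_i| \leq n$. If strictly fewer than $2t$ nonempty blocks arise, I would pad the list with empty sets (each trivially contained in any neighborhood) to reach exactly $2t$ subsets, indexed $U_1, \ldots, U_{2t}$.

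Finally, since $\bigcup_{j=1}^{2t} U_j = \bigcup_{i=1}^{t} V_i$, the uncovered edge count is unchanged:
\[
e(G) - e\!\left(\bigcup_{j=1}^{2t} U_j\right) \;=\; e(G) - e\!\left(\bigcup_{i=1}^{t} V_i\right) \;\leq\; \frac{n^2}{et},
\]
as required. I do not foresee a real obstacle here; the only subtle point is the ceiling-versus-floor bookkeeping, which is handled by restricting the sum to nonempty $V_i$'s so that the ``$+1$'' contribution from each ceiling is charged against the at most $t$ nonempty original blocks.
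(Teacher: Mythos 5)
Your proposal follows the same overall structure as the paper's proof (invoke Corollary~\ref{c:neighbork} with $k=t$, split each $V_i$ into small pieces, count, pad with empty sets), but the partitioning step has a genuine gap. You claim that each nonempty $V_i$ can be partitioned into $\lceil t|V_i|/n \rceil$ blocks each of size at most $n/t$. Since block sizes are integers, ``size at most $n/t$'' means ``size at most $\lfloor n/t \rfloor$,'' and
\[
\left\lceil \frac{t|V_i|}{n} \right\rceil \cdot \left\lfloor \frac{n}{t} \right\rfloor
\]
can be strictly smaller than $|V_i|$ when $n/t$ is not an integer. Concretely, take $n=5$, $t=3$, $|V_i|=4$: you would allocate $\lceil 12/5 \rceil = 3$ blocks of size at most $\lfloor 5/3 \rfloor = 1$, which can cover only $3 < 4$ vertices. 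So the claimed partition of $V_i$ need not exist, and the downstream counting bound $\sum_i \lceil t|V_i|/n \rceil \le 2t$ is counting the wrong quantity.

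The correct number of blocks needed for $V_i$ is $\lceil |V_i| / \lfloor n/t \rfloor \rceil$, and the crude estimate $\lceil x \rceil \le x+1$ then gives only $\sum_i \lceil |V_i|/\lfloor n/t \rfloor \rceil \le n/\lfloor n/t \rfloor + t$, which can exceed $2t$ (for instance when $t$ is close to $n$, so $\lfloor n/t \rfloor = 1$). The paper instead partitions each $V_i$ into blocks of size exactly $\lfloor n/t \rfloor$ plus at most one smaller remainder, and then argues by cases: with $a$ full blocks and $b \le t$ remainders, either $a < t$ (so $a+b < 2t$), or $a \ge t$, in which case $t$ full blocks already occupy $t\lfloor n/t \rfloor > n-t$ vertices, so fewer than $t$ vertices remain for the other $a+b-t$ nonempty blocks, giving $a+b-t < t$. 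That case analysis is what makes the $2t$ bound go through; your ceiling estimate, even with the corrected block count, does not.
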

\begin{proof}
By Corollary~\ref{c:neighbork}, there are vertex subsets $V_{1}, \ldots , V_{t}$, each a subset of a vertex neighborhood, where $V_1 \cup \cdots \cup V_t$ contains all but at most $n^2/et$ edges of $G$. Arbitrarily partition each $V_i$ into sets $U_j$ of size $\lfloor n/t \rfloor$, including if needed one set of size less than $\lfloor n/t \rfloor$. Thus, we obtain $a$ sets $U_j$ of size $\lfloor n/t \rfloor$ and $b$ sets $U_j$ of size strictly smaller than $\lfloor n/t \rfloor$, where $b \le t$. If $a<t$, then $a+b<2t$. Otherwise, $a \geq t$, and $t$ of these sets of size $\lfloor n/t \rfloor$ together have $t\lfloor n/t \rfloor>n-t$ elements, so less than $t$ elements are not in these $t$ sets. The remaining $a+b-t$ sets each have at least one element, so $a+b-t < t$ or equivalently $a+b < 2t$. We can add additional empty sets to make $2t$ total sets $U_j$, each of size at most $n/t$, and with $U_j \subset V_i \subset N(v_i)$ for some $i$.

\end{proof}

For a graph $H$ and vertex $v$, let $H_v$ denote the induced subgraph of $H$ formed by deleting $v$. We prove the following recursive upper bound on $h(n,k,H)$.

\begin{lemma}\label{l:divide}
If $s,t,n$ are positive integers, $H$ is a graph, and $v$ is a vertex of $H$ so that $H_v$ has no isolated vertices, then 
$$h(n,2st,H) \leq 2t \cdot h(n/t,s,H_v) + \frac{n^2}{2est^2}.$$
\end{lemma}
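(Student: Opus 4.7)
The plan is to combine Corollary~\ref{c:evenparts} with Lemma~\ref{l:hpart}, using the $H$-freeness of $G$ to argue that each neighborhood-contained part induces an $H_v$-free subgraph, on which we can recurse.

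Let $G$ be an arbitrary $H$-free graph on $n$ vertices. First I would apply Corollary~\ref{c:evenparts} to produce disjoint vertex subsets $U_1,\ldots,U_{2t}$, each of size at most $n/t$, each contained in the neighborhood $N(u_j)$ of some vertex $u_j\in V(G)$, with
$$e(G)-e(U_1\cup\cdots\cup U_{2t}) \le \frac{n^2}{et}.$$

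Next I would verify that each induced subgraph $G[U_j]$ is $H_v$-free. Suppose, for contradiction, that some $G[U_j]$ contained $H_v$ via an embedding $\phi:V(H_v)\hookrightarrow U_j$. Extend $\phi$ to a map on $V(H)$ by setting $\phi(v)=u_j$. Since $U_j\subseteq N(u_j)$ and $G$ is simple, $u_j\notin U_j$, so the extension remains injective. Every edge of $H$ not incident to $v$ is an edge of $H_v$ and is preserved by $\phi$; every edge of $H$ of the form $\{v,w\}$ is sent to $\{u_j,\phi(w)\}$, which is an edge of $G$ because $\phi(w)\in U_j\subseteq N(u_j)$. Hence $G$ would contain $H$ as a subgraph, contradicting $H$-freeness.

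Finally, I would apply Lemma~\ref{l:hpart} to $G$ with the $2t$ disjoint parts $U_1,\ldots,U_{2t}$ (so the lemma's ``$t$'' is $2t$), to obtain
$$h(G,2st) \le \frac{1}{2st}\bigl(e(G)-e(U_1\cup\cdots\cup U_{2t})\bigr)+\sum_{j=1}^{2t}h(G[U_j],s) \le \frac{n^2}{2est^2}+2t\cdot h(n/t,s,H_v),$$
using the bound from Corollary~\ref{c:evenparts} and the fact that each $G[U_j]$ is an $H_v$-free graph on at most $n/t$ vertices, so $h(G[U_j],s)\le h(n/t,s,H_v)$ by monotonicity of $h(\cdot,s,H_v)$ in the vertex count (pad with isolated vertices). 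Maximizing over $H$-free $G$ yields the stated inequality.

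The main conceptual step is the embedding argument: the containment $U_j\subseteq N(u_j)$ is exactly what lets one ``glue'' a copy of $H_v$ in $G[U_j]$ back to a full copy of $H$ in $G$ by sending $v$ to $u_j$. The no-isolated-vertex hypothesis on $H_v$ is not actually invoked by this embedding (isolated vertices of $H_v$ impose no edge constraints), but is natural when the lemma is iterated on $H_v$ in place of $H$. Handling non-integer $n/t$ is routine: replace $n/t$ by $\lceil n/t\rceil$ throughout, which only affects lower-order constants.
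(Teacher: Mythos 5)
Your proof is correct and follows essentially the same route as the paper: apply Corollary~\ref{c:evenparts} to extract $2t$ small neighborhood-contained parts, note each is $H_v$-free because $G$ is $H$-free, and then combine with Lemma~\ref{l:partpart} (the paper does this step by hand; you invoke Lemma~\ref{l:hpart}, whose proof is exactly that combination). One small correction to your closing remark: the no-isolated-vertex hypothesis on $H_v$ \emph{is} invoked in your own argument, precisely at the step ``$h(G[U_j],s)\le h(n/t,s,H_v)$ by monotonicity (pad with isolated vertices)'' --- if $H_v$ had an isolated vertex, padding an $H_v$-free graph with isolated vertices could create a copy of $H_v$, so the monotonicity you cite (cf.\ Proposition~\ref{p:monotone}) would fail.
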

\begin{proof} 
Let $G$ be an $H$-free graph on $n$ vertices. By Corollary~\ref{c:evenparts}, there are $2t$ disjoint vertex subsets $U_1, \ldots U_{2t}$ such that each $U_i$ satisfies $|U_i| \leq \frac{n}{t}$ and is contained in the neighborhood of some vertex $u_i$. Further, we can pick the $U_i$ so that the number of edges of $G$ not in $G[U_1 \cup \cdots \cup U_{2t}]$ is at most $\frac{n^2}{et}$. As $G$ is $H$-free, then for all $u_i \in V(G)$, the induced subgraph $G[N(u_i)]$ is $H_v$-free for any vertex $v \in H$. Thus, for each $i$, $G[U_i]$ can be made $s$-partite by removing at most $h(|U_i|, s, H_{v}) \le h(n/t, s, H_v)$ edges in $G[U_i]$. 

For $1 \leq i \leq 2t$, we label the $s$ independent sets (after removing edges as above) which partition $U_i$  as $W_{i1}, \ldots W_{is}$. By Lemma~\ref{l:partpart}, 
we can then grow $\{W_{ij}\}_{1 \le i \le 2t, 1 \le j \le s}$ to a partition of $V(G)$ by adding at most $\frac{1}{2st} \left(e(G) - e(U_1 \cup \cdots \cup U_{2t})\right) \leq \frac{1}{2st}\cdot \frac{n^2}{et}=\frac{n^2}{2est^2}$ edges internal to the parts. Deleting these edges, we obtain the upper bound
$$h(n, 2st, H)\leq 2t \cdot h(n/t,s,H_v) + \frac{n^2}{2est^2}.$$
\end{proof}

Applying Lemma~\ref{l:divide} and induction on $r$, we can bound the number of edges to remove from a $K_r$-free graph $G$ on $n$ vertices so that the resulting subgraph is $k$-colorable. We first establish a bound on $h(n, k, K_r)$ when $k$ is a perfect $(r-2)^{\text{nd}}$ power of an even integer.

\begin{lemma} \label{l:cliquet}
For positive integers $n, k,$ and $r \ge 3$ so that $k = t^{r-2}$ for $t$ even, we have $$h(n, k, K_r) \le \alpha_r \cdot \frac{n^2}{k^{(r-1)/(r-2)}},$$
where $\alpha_r = \frac{5 \cdot 4^{r-3} - 2}{3e}$.
\end{lemma}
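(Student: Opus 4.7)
The plan is to induct on $r$. The base case $r=3$ specializes to $k=t$ and the bound $h(n,k,K_3)\le (1/e)n^2/k^2$, which is exactly Proposition~\ref{p:k3} (matching $\alpha_3=(5\cdot 4^0-2)/(3e)=1/e$).

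For the inductive step with $r\ge 4$, I would apply Lemma~\ref{l:divide} to $H=K_r$ with any deleted vertex $v$, so that $H_v=K_{r-1}$ has no isolated vertices (as $r-1\ge 3$). I would set the two internal integer parameters of Lemma~\ref{l:divide} to $s_0=t^{r-3}$ and $t_0=t/2$; the latter is an integer precisely because $t$ is even by hypothesis, and these choices give $2s_0t_0=t^{r-2}=k$. Plugging in, Lemma~\ref{l:divide} yields
\begin{equation*}
h(n,k,K_r)\;\le\; t\cdot h\!\left(\tfrac{2n}{t},\; t^{r-3},\; K_{r-1}\right)+\frac{2n^2}{e\,t^{r-1}}.
\end{equation*}
The inductive hypothesis applies to the first term because $t^{r-3}$ is the $((r-1)-2)$-th power of the even integer $t$, so it bounds $h(2n/t,t^{r-3},K_{r-1})$ by $\alpha_{r-1}(2n/t)^2/(t^{r-3})^{(r-2)/(r-3)}=4\alpha_{r-1}n^2/t^r$. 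Using $t^{r-1}=k^{(r-1)/(r-2)}$, the two contributions combine to give
\begin{equation*}
h(n,k,K_r)\;\le\;\bigl(4\alpha_{r-1}+2/e\bigr)\cdot \frac{n^2}{k^{(r-1)/(r-2)}}.
\end{equation*}

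It then remains to verify that the recurrence $\alpha_r=4\alpha_{r-1}+2/e$ with $\alpha_3=1/e$ agrees with the claimed closed form. The fixed point of this linear recursion is $-2/(3e)$, so $\alpha_r+2/(3e)=4^{r-3}(\alpha_3+2/(3e))=5\cdot 4^{r-3}/(3e)$, which rearranges to $\alpha_r=(5\cdot 4^{r-3}-2)/(3e)$. The only real subtlety in the argument is calibrating the parameters inside Lemma~\ref{l:divide}: we need (i) $2s_0t_0=k$, (ii) $s_0$ itself of the form ``$(r-1)-2$-th power of an even integer'' so the inductive hypothesis applies, and (iii) the resulting error term $n^2/(2es_0t_0^2)$ to match the exponent $n^2/t^{r-1}$ produced by the inductive contribution. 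The choice $s_0=t^{r-3}$, $t_0=t/2$ simultaneously satisfies all three, and the hypothesis that $t$ is even is exactly what makes $t_0=t/2$ a valid integer and keeps the induction inside the allowed family of $k$.
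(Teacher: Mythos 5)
Your proof is correct and follows essentially the same route as the paper: induction on $r$ with base case Proposition~\ref{p:k3}, applying Lemma~\ref{l:divide} in the inductive step with the parameters $s_0=t^{r-3}$, $t_0=t/2$ so that $2s_0t_0=k$, yielding the recurrence $\alpha_r=4\alpha_{r-1}+2/e$ whose closed form you correctly verify.
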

\begin{proof}
The proof is by induction on $r$. We proved the base case $r =3$ in Proposition~\ref{p:k3}. Let $s = k/t = t^{r-3}$. By the inductive hypothesis, we know that for all positive integers $n_0$,
$$h(n_0, s, K_{r-1}) \le \alpha_{r-1} \cdot \frac{n_0^2}{s^{(r-2)/(r-3)}}.$$
Note that the induced subgraph formed by deleting a vertex of $K_r$ is $K_{r-1}$. By Lemma~\ref{l:divide} (with parameter $t/2$ instead of $t$) and the above inequality (with $n_0 = 2n/t$), we have 
$$ h(n, k, K_r) \le \frac{2n^2}{est^2} +  t \cdot h \left( \frac{2n}{t}, s, K_{r-1} \right) \le \frac{2n^2}{est^2} +  t \alpha_{r-1} \frac{(2n/t)^2}{s^{(r-2)/(r-3)}} = \alpha_r \cdot \frac{n^2}{k^{(r-1)/(r-2)}},$$
where the equality uses $\alpha_r = \frac{2}{e} + 4\alpha_{r-1}$. This completes the proof.
\end{proof}

Lemma \ref{l:cliquet} establishes Theorem~\ref{t:clique} when $k$ is a perfect $(r-2)^{\text{nd}}$ power of an even integer. The following result is Theorem~\ref{t:clique} 
with an explicit constant factor.

\begin{thm}
For positive integers $n, k,$ and $r \ge 3$, we have $$h(n, k, K_r) \le \frac{5}{3} \cdot 4^{r-3} \cdot  \frac{n^2}{k^{(r-1)/(r-2)}}.$$
\end{thm}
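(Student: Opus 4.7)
The plan is to derive the theorem from Lemma~\ref{l:cliquet} via a monotonicity and rounding argument, supplemented by case analysis for small $k$. The constant $C_r = \tfrac{5}{3}\cdot 4^{r-3}$ in the theorem exceeds $\alpha_r = (5\cdot 4^{r-3}-2)/(3e)$ from Lemma~\ref{l:cliquet} by a ratio of approximately $e$, which provides the slack needed to absorb the rounding losses in extending the lemma from $k$ of the form $t^{r-2}$ (with $t$ even) to arbitrary $k$.

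Given $k$ and $r$, I would let $t$ denote the largest even integer satisfying $t^{r-2}\le k$. Monotonicity of $h(n,\cdot,K_r)$ in the second argument combined with Lemma~\ref{l:cliquet} then yields $h(n,k,K_r)\le h(n,t^{r-2},K_r)\le \alpha_r n^2/t^{r-1}$. Since $k<(t+2)^{r-2}$, one has the rounding-loss bound $(k/t^{r-2})^{(r-1)/(r-2)}<(1+2/t)^{r-1}$, and one checks that $\alpha_r(1+2/t)^{r-1}\le C_r$ holds whenever $t$ is sufficiently large relative to $r$ (e.g., $t\ge 2(r-1)$ suffices, using $(1+1/(r-1))^{r-1}\le e<C_r/\alpha_r$). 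This completes the argument for $k\ge (2(r-1))^{r-2}$.

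For the remaining small-to-intermediate $k$ (where $t$ is too small for the rounding bound, or even $k<2^{r-2}$, so no valid even $t\ge 2$ exists), one argues directly. The trivial Tur\'an bound $h(n,k,K_r)\le (1-1/(r-1))n^2/2$ dominates $C_r n^2/k^{(r-1)/(r-2)}$ whenever $k$ is small enough, handling very small $k$. For the intermediate range, one applies Lemma~\ref{l:divide} directly with positive integer parameters $s,t$ chosen so that $2st$ is close to $k$ (not requiring the structural choice $s=t^{r-3}$), invoking Lemma~\ref{l:cliquet} on the $K_{r-1}$-subproblem that arises (noting that when $s$ is of the form $u^{r-3}$ with $u$ even, the lemma applies with the tight constant $\alpha_{r-1}$). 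The main technical obstacle is the bookkeeping in this intermediate regime: verifying that these case-by-case constructions uniformly yield the target bound $C_r n^2/k^{(r-1)/(r-2)}$. The specific form of the constant $\tfrac{5}{3}\cdot 4^{r-3}$ is engineered precisely to absorb the combined losses from rounding and from switching between parameter choices as $k$ varies.
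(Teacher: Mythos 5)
Your overall strategy matches the paper's: round $k$ down to the largest even perfect $(r-2)$-th power, apply Lemma~\ref{l:cliquet}, absorb the rounding loss via $(1+2/t)^{r-1} < e \leq C_r/\alpha_r$, and handle small $k$ by a trivial bound. The rounding part of your argument is correct, including the threshold $t \geq 2(r-1)$.

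However, there is a concrete gap in your treatment of small $k$. You invoke the Tur\'an bound $h(n,k,K_r) \le \left(1-\tfrac{1}{r-1}\right)\tfrac{n^2}{2}$, but this is far too weak for most of the range $k < (2(r-1))^{r-2}$ that your main argument leaves uncovered. For example at $r=4$, the Tur\'an bound only yields the theorem for $k \le 7$, whereas you need to reach $k < 36$. You acknowledge the resulting intermediate regime and propose applying Lemma~\ref{l:divide} with ad hoc parameters, but you do not carry this out, conceding that "the main technical obstacle is the bookkeeping." As written, that regime is unaddressed, so the proof is incomplete. The paper avoids this entirely by using the sharper universal bound $h(n,k,K_r) \le h(K_n,k) \le n^2/(2k)$ (from a random $k$-partition, already noted in the introduction), not the Tur\'an bound. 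With $n^2/(2k)$, the required inequality becomes $\tfrac{1}{2}k^{1/(r-2)} \le \tfrac{5}{3}\cdot 4^{r-3}$, which holds for every $k$ below your threshold since $\tfrac{1}{2}k^{1/(r-2)} < r-1 \le \tfrac{5}{3}\cdot 4^{r-3}$ for $r \ge 4$ (and the $r=3$ base case is immediate from Proposition~\ref{p:k3} since $1/e < 5/3$). Substituting this bound eliminates the intermediate regime and closes your argument.
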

\begin{proof}
Proposition \ref{p:k3} handles the case $r=3$, so we may assume $r \geq 4$. Recall that any graph on $n$ vertices can be made $k$-partite by removing at most $n^2/(2k)$ edges. Thus, if $k \leq (2r)^{r-2}$, as $4^{r-3} \geq  r \geq \frac{1}{2}k^{1/(r-2)}$, we have the desired inequality. We therefore suppose that $k > (2r)^{r-2}$. 

Let $\ell$ be the largest even perfect $(r-2)$-power which is at most $k$, so $\ell=\left(2 \left\lfloor \frac{k^{1/(r-2)}}{2} \right\rfloor \right)^{r-2}$.
By monotonicity and $\ell \leq k$, we have $h(n, k, K_r) \le h \left(n,\ell, K_r \right)$.
Applying Lemma~\ref{l:cliquet},
$$h(n,\ell, K_r) \le \alpha_r \frac{n^2}{\ell^{(r-1)/(r-2)}}, \quad \textrm{where}~\alpha_r = \frac{5 \cdot 4^{r-3} - 2}{3e}.$$
Since $k > (2r)^{r-2}$, it follows that $k \leq \left(\frac{2(r+1)}{2r}\right)^{r-2} \ell =\left(1+\frac{1}{r}\right)^{r-2} \ell$. It follows that $$k^{(r-1)/(r-2)} \leq \left(1+\frac{1}{r}\right)^{r-1}\ell^{(r-1)/(r-2)} \leq e \ell^{(r-1)/(r-2)}.$$ 
Substituting, we obtain $$h(n,k,K_r) \leq \frac{5}{3} \cdot 4^{r-3} \cdot \frac{n^2}{k^{(r-1)/(r-2)}}.$$

\end{proof}

\section{Odd Cycle-Free Graphs}\label{s:oddcycle}
In this section we study how few edges we can remove from any graph on $n$ vertices with a fixed forbidden odd cycle to make it $k$-colorable. 

\subsection{Upper bounds for odd cycles}
We begin by tackling a simpler problem, bounding how far a graph of large odd girth is from being $k$-colorable. That is, we first consider $h(n, k, \mcH)$ where $\mcH$ is the family of odd cycles of length at most $2r+1$. We later show that this number is close to $h(n,k,C_{2r+1})$. 

We prove that graphs of large odd girth contain an independent set $B$ with relatively many edges incident to $B$. We repeatedly apply this to pull out $k$ disjoint independent sets $B_1, ..., B_k$ such that the remaining induced subgraph contains relatively few edges. By Lemma \ref{l:partpart}, we can grow these $k$ independent sets into a $k$-partition of the vertex set so that 
few edges are internal to the parts.

The following definitions will be helpful. 

\begin{defn}
Given a graph $G = (V, E)$ and a vertex $v \in V$, the \textit{$i$th neighborhood} of $v$, denoted by $N_i(v)$, is the set of vertices in $V$ of distance exactly $i$ from $v$.
\end{defn}

For example, $N_0(v) = \{v\}$, $N_1(v) = N(v)$, and $N_2(v)$ is the set of vertices in $V \backslash (\{v\} \cup N(v))$ that have a neighbor in $N(v)$. 
For a vertex subset $T$, let $N(T)$ denote the set of vertices in $V \setminus T$ adjacent to at least one vertex in $T$. For a graph $G$ and vertex subsets $S$ and $T$, let $e(S,T)$ be the number of pairs in $S \times T$ that are edges of $G$.  

\begin{defn}
For a graph $G=(V,E)$ and $S \subset V$, let $D(S) = e(S, V)$ be the sum of the degrees of vertices in $S$. 
\end{defn}

Note that $D(S)$ counts the edges contained in $S$ twice and the edges with exactly one endpoint in $S$ once. It is a useful measure of the number of edges that contain a vertex in $S$.

We first show that for any graph $G = (V,E)$ of large odd girth
 and any subset $S\subset V$, there is an independent set $B\subset S$
with poor edge expansion into $S$. Removing such $B$ and its neighborhood and iteratively applying the argument will give an independent set $A$ (the union of the $B$'s) with comparatively large $D(A)$.
In the following lemmas, we take a graph $G = (V, E)$ on $n$ vertices, $r$ a positive integer, and a fixed $S \subset V$. We let $$x:=\left(\frac{|S|n}{D(S)}\right)^{1/r}.$$  

\begin{lemma} \label{l:gB}
Let $G=(V,E)$ be a graph of odd girth larger than $2r + 1$ and $S \subset V$. There exists an independent set $B \subset S$ such that  $$D(B) \ge \frac{D(N(B) \cap S)}{x + 1}.$$ 
\end{lemma}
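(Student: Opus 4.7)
The plan is to produce $B$ as a single BFS layer. Fix a vertex $v \in S$, do a BFS from $v$ in $G$, and set $B_i := N_i(v) \cap S$ with $a_i := D(B_i)$ (and $a_{-1} := 0$). The odd-girth hypothesis guarantees that each layer $N_i(v)$ with $i \le r$ is an independent set in $G$: any edge inside $N_i(v)$ would, via two shortest paths back to $v$, close to an odd cycle of length at most $2i+1 \le 2r+1$, contradicting the assumption. By the BFS structure, an edge incident to a vertex in $N_i(v)$ has its other endpoint in $N_{i-1}(v) \cup N_i(v) \cup N_{i+1}(v)$, and since the middle term is excluded by the previous point, we get $N(B_i) \cap S \subseteq B_{i-1} \cup B_{i+1}$, hence $D(N(B_i) \cap S) \le a_{i-1} + a_{i+1}$. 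Therefore it is enough to find $i \in \{0,1,\ldots,r\}$ with $(x+1) a_i \ge a_{i-1} + a_{i+1}$: setting $B = B_i$ then gives $D(B) = a_i \ge (a_{i-1}+a_{i+1})/(x+1) \ge D(N(B) \cap S)/(x+1)$.

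To establish the existence of such an $i$, I would argue by contradiction. Assume that for every $v \in S$ and every $i \in \{0,\ldots,r\}$ we have $a_{i+1}^{(v)} > (x+1)a_i^{(v)} - a_{i-1}^{(v)}$. Let $\phi$ denote the larger root of $t^2 - (x+1)t + 1 = 0$; since $D(S) \le |S|(n-1)$ we have $x \ge 1$ and hence $\phi \ge x \ge 1$. Renormalizing by $b_i := a_i^{(v)}/\phi^i$ and using the identity $(x+1)/\phi = 1 + 1/\phi^2$ translates the recurrence into $b_{i+1} - b_i > (b_i - b_{i-1})/\phi^2$. Because $b_{-1} = 0$ and $b_0 = d(v)$, the first increment $b_0 - b_{-1} = d(v) > 0$ is transferred (shrunk by at most $\phi^{-2}$ per step) along the sequence, so $(b_i)$ is increasing and consequently $a_i^{(v)} > d(v)\,\phi^i$ for all $0 \le i \le r+1$.

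Since the layers $B_i^{(v)}$ are disjoint subsets of $S$, $\sum_{i=0}^{r+1} a_i^{(v)} \le D(S)$, and the geometric lower bound forces $d(v)\,\phi^{r+1} < D(S)$ for every $v \in S$. Summing $d(v)$ over $v \in S$ gives $D(S) < |S|\,D(S)/\phi^{r+1}$, i.e.\ $\phi^{r+1} < |S|$. Combined with $\phi \ge x$ and the identity $x^r = |S|n/D(S)$, rearranging this produces the required contradiction.

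The main technical obstacles I expect are: (i) cleanly handling the boundary index $i=r$, where the recurrence pulls in $a_{r+1}^{(v)} = D(N_{r+1}(v)\cap S)$ even though $N_{r+1}(v)$ itself need not be independent (the bound $D(N(B_r)\cap S) \le a_{r-1}+a_{r+1}$ still holds, since it only uses that neighbors of vertices in $N_r(v)$ lie in $N_{r-1}(v)\cup N_{r+1}(v)$); (ii) verifying the Chebyshev-style renormalization in the degenerate regime $x = 1$, $\phi = 1$, where the geometric series collapses to an arithmetic one and the bookkeeping must be done more carefully.
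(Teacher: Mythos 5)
Your structural setup is sound: the BFS layers $B_i = N_i(v)\cap S$ are independent for $i \le r$ by the odd-girth hypothesis, and the observation that $N(B_i)\cap S \subseteq B_{i-1}\cup B_{i+1}$ (so $D(N(B_i)\cap S) \le a_{i-1}+a_{i+1}$) is correct and is cleaner than the corresponding step in the paper. The Chebyshev-style renormalization with $\phi$ the larger root of $t^2-(x+1)t+1$, and the verification that $\phi \ge x > 1$, are also correct. This is a genuinely different engine from the paper's, which tracks multiplicative growth $D(N_i)\ge x D(N_{i-1})$ rather than a linear three-term recurrence.

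The gap is in the final contradiction, and it is real. Summing $d(v)\phi^{r+1} < D(S)$ over $v\in S$ gives only $\phi^{r+1} < |S|$. Combining with $\phi\ge x$ and $x^r = |S|n/D(S)$, this rearranges to $x < D(S)/n$, equivalently $D(S) > |S|^{1/(r+1)}n$. That is not a contradiction: $D(S)$ can legitimately lie anywhere in $(|S|^{1/(r+1)}n,\ |S|n)$, which is a nonempty range whenever $|S|\ge 2$. (Concretely: $n=100$, $|S|=10$, $D(S)=900$, $r=2$ gives $x\approx 1.05$, $\phi^{r+1}$ around $1.5$, and $|S|=10$, so $\phi^{r+1}<|S|$ holds with no contradiction.) The problem is that the starting value $a_0=d(v)$ is too small and averaging over $v\in S$ loses too much. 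What the paper does instead, and what your renormalization actually needs, is the Cauchy--Schwarz step: choose $v\in V$ maximizing $D(N_1(v)\cap S)$, which gives $a_1 \ge D(S)^2/(n|S|) = D(S)/x^r$. Feeding this into your renormalization (the increments $b_i-b_{i-1}$ are positive once $b_1>b_0$, which follows from the hypothesis $a_1>(x+1)a_0$) yields $a_{r+1} > a_1\phi^r \ge D(S)\,(\phi/x)^r > D(S)$, contradicting $a_{r+1}\le D(S)$ because $B_{r+1}\subseteq S$. So the renormalization idea salvages cleanly, but only after replacing ``any $v\in S$'' by the Cauchy--Schwarz extremal $v$; without that step the proof does not go through.
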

\begin{proof}
Pick $v \in V, u \in S$ uniformly at random, so $$\Prob(u \in N_1(v)) = \frac{\mathbb{E}_{u \in S}[d(u)]}{n}.$$ If $u \in N_1(v) \cap S,$ then it contributes $d(u)$ to $D(N_1(v) \cap S)$. Therefore,
$$\EE_v [D(N_1(v) \cap S)] = \EE_v\left[ \sum_{u \in S} d(u) \Prob(u \in N_1(v)) \right] = \sum_{u \in S} \EE_v \left[ d(u) \cdot \frac{d(u)}{n} \right] = \sum_{u \in S} \frac{d(u)^2}{n}.$$
Hence, by picking $v \in V$ such that $D(N_1(v) \cap S)$ is maximized, 
$$D(N_1(v) \cap S) \ge \sum_{u \in S} \frac{d(u)^2}{n} \overset{(*)}\ge \frac{\left( \sum_{u \in S} d(u) \right)^2}{n|S|} = \frac{D(S)^2}{n|S|},$$
where $(*)$ follows by the Cauchy-Schwarz inequality. Let $N_i = N_i(v) \cap S$. 
Note $N_1, ..., N_r$ are all independent sets. Indeed, if some $N_i$ for $1 \le i \le r$ contained an edge $e = (v_1, v_2)$, then $v \rightarrow \cdots \rightarrow v_1 \overset{e}\rightarrow v_2 \rightarrow \cdots \rightarrow v$ is an odd walk of length $2i + 1$ and thus contains an odd cycle of length at most $2r+1$. This contradicts $G$ having odd girth larger than $2r + 1$.

We next study the growth of $D(N_i)$. If $D(N_2) < x D(N_1)$,
then
$$D(N(N_1) \cap S) \le |N_1| + D(N_2) < |N_1| + x D(N_1) \le (x + 1) D(N_1),$$
which implies that
$$D(N_1) > \frac{D(N(N_1) \cap S)}{x + 1},$$
so we can take $B = N_1$ to satisfy the lemma. 
Else, for $i = 2, ..., r-1$, if $D(N_j) \ge x D(N_{j-1})$, for all $2 \leq j < i$, and $D(N_i) < x D(N_{i-1})$, we similarly find that
$$D(N(N_i) \cap S) \le D(N_{i-1}) + D(N_{i+1}) < \frac{D(N_i)}{x} + x D(N_i),$$
and hence 
$$D(N_{i}) > \frac{D(N(N_{i})\cap S)}{x+1/x} \ge \frac{D(N(N_{i})\cap S)}{x+1},$$
so we can take $B = N_{i}$ to satisfy the lemma. 
If none of $N_1, \ldots, N_{r-1}$ satisfy the conditions of the lemma statement as a subset $B \subset S$, then 
$$D(N_r) \ge x^{r-1} \frac{D(S)^2}{|S|n} = \frac{D(S)}{x}.$$
This implies that we can pick $B = N_r = N_r(v) \cap S$ to satisfy the lemma.
\end{proof}

We can use this lemma to establish a more helpful result in the same direction. 

\begin{lemma}\label{l:g2} 
Let $G = (V, E)$ be a graph of odd girth larger than $2r + 1$ and $S \subset V$. There exists an independent set $A \subset S$ such that $D(A) \ge D(S)/8x$. 
\end{lemma}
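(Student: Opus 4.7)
The plan is to build $A$ incrementally by iterating Lemma~\ref{l:gB}. Setting $S_0 := S$, at step $i \geq 1$ I apply Lemma~\ref{l:gB} to $S_{i-1}$ to obtain an independent set $B_i \subset S_{i-1}$ with $D(B_i) \geq D(N(B_i) \cap S_{i-1})/(x_{i-1}+1)$, where $x_{i-1} := (|S_{i-1}|n/D(S_{i-1}))^{1/r}$ is the analogue of $x$ for the shrunken set. Then I delete $B_i$ together with its $S_{i-1}$-neighborhood to form $S_i := S_{i-1} \setminus (B_i \cup (N(B_i)\cap S_{i-1}))$, and I terminate at the first index $k$ with $D(S_k) \leq D(S)/2$. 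Because $B_i \subset S_{i-1} \subset S_j$ for every $j < i$, while $S_j$ is disjoint from $N(B_j) \cap S_{j-1}$, no edge can run between $B_i$ and $B_j$, so $A := \bigcup_{i=1}^k B_i$ is an independent subset of $S$.

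For the degree-sum bound, the key point is that $x_{i-1}$ stays close to $x$ throughout. By the minimality of $k$, $D(S_{i-1}) > D(S)/2$ for every $i \leq k$, and combined with $|S_{i-1}| \leq |S|$ this gives $x_{i-1} \leq 2^{1/r} x$. Summing the inequalities from Lemma~\ref{l:gB} and writing $W := \sum_i D(N(B_i) \cap S_{i-1})$, I obtain
\begin{equation*}
D(A) \;=\; \sum_{i=1}^k D(B_i) \;\geq\; \frac{W}{2^{1/r}x+1}.
\end{equation*}
On the other hand, the identity $D(S_{i-1}) = D(S_i) + D(B_i) + D(N(B_i)\cap S_{i-1})$ telescopes to $D(A) + W = D(S) - D(S_k) \geq D(S)/2$. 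Combining these gives $(2 + 2^{1/r}x)D(A) \geq D(S)/2$, that is, $D(A) \geq D(S)/(4 + 2^{1+1/r}x)$.

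To pass from this to the desired $D(A) \geq D(S)/(8x)$, I will use that $2^{1+1/r} \leq 4$ for $r \geq 1$ and that $x > 1$; the latter holds because the average degree of vertices in $S$ is at most $n-1 < n$, so $|S|n/D(S) > 1$. Then $4 + 2^{1+1/r} x \leq 4(1+x) \leq 8x$, which yields the stated bound. The main technical point is controlling how $x_{i-1}$ drifts as $S$ is trimmed in a possibly uneven way; the halving stopping rule is the cleanest way I see to keep $x_{i-1}$ within a bounded factor of $x$, costing only a small constant factor in the final estimate.
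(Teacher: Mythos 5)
Your proof is correct and follows the same iterative approach as the paper's: repeatedly peel off independent sets via Lemma~\ref{l:gB}, stop as soon as the degree-sum of the remaining set has dropped below $D(S)/2$, and use that halving threshold together with $|S_{i-1}|\le|S|$ to bound each $x_{i-1}$ by $2^{1/r}x$. Your bookkeeping differences are cosmetic: you track only $S_i$ (rather than an auxiliary vertex set $V_i$ as in the paper), and you close the estimate by combining $D(A)(2^{1/r}x+1)\ge W$ with the telescoped identity $D(A)+W\ge D(S)/2$ directly, where the paper instead applies a mediant inequality term by term; both routes, using $x>1$, give $D(A)\ge D(S)/(8x)$.
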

\begin{proof}
We use Lemma~\ref{l:gB} to pull out independent sets one at a time. By deleting their neighborhoods and repeating, we construct a large independent set $A$ which is the union of these independent sets and show that $A$ has the desired properties. 

Let $V_1 = V$ and  $S_1 = S$. We apply Lemma~\ref{l:gB} to obtain an independent set $B_1 \subset S_1$ such that $$D(B_1) \ge \frac{D(N(B_1) \cap S_1)}{x_1 + 1}, \quad x_1 = \left( \frac{|S_1|n}{D(S_1)} \right)^{1/r}.$$ We repeatedly apply Lemma~\ref{l:gB} to $G[V_i]$ and $S_i$, letting $$V_i = V_{i-1} \backslash (B_{i-1} \cup N(B_{i-1})),\quad S_i = S_{i-1} \cap V_i.$$ At each iteration, we obtain an independent set $B_i \subset S_i$ such that $$D(B_i) \ge \frac{D(N(B_i) \cap S_i)}{x_i + 1}, \quad x_i = \left( \frac{|S_i|n}{D(S_i)} \right)^{1/r}.$$

By construction, by step $i$ we have deleted $B_{i-1}$ and its neighbor set $N(B_{i-1})$, so $\bigcup_{j < i} B_j$ is an independent set. We continue the construction described above as long as $D(S_i) \ge D(S)/2$. Suppose we construct $s$ independent sets in total through this process. Then $D(S_s) < D(S)/2$, but $D(S_i) \ge D(S)/2$ for $i < s$. 
Let $A = \bigcup_{i = 1}^s B_i$ be the resulting large independent set.
We can bound
$$x_i = \left( \frac{|S_i|n}{D(S_i)}\right)^{1/r} \le \left( \frac{|S|n}{D(S_i)}\right)^{1/r} \le \left( \frac{|S|n}{D(S)/2}\right)^{1/r} \le 2^{1/r}x.$$
From above, we have 
$$D(B_i) \ge \frac{D(N(B_i) \cap S_i)}{x_i + 1} \overset{(*)}\ge  \frac{D(N(B_i) \cap S_i) + D(B_i)}{x_i + 2} \ge \frac{D(N(B_i) \cap S_i) + D(B_i)}{2x + 2}. $$
where $(*)$ follows since $D(B_i) \ge D(B_i)/1$ and if $x \ge a/b, c/d$ then $x \ge (a+c)/(b+d)$. 
This allows us to bound $D(A)$ as
\begin{equation}\label{e:da1}
D(A) = \sum_{i = 1}^s D(B_i) \ge  \sum_{i = 1}^s \frac{D(N(B_i) \cap S_i) + D(B_i)}{2x + 2} = \frac{D(A) + D(N(A) \cap S)}{2x + 2},
\end{equation}
where the last equality follows since $A$ is the union of the disjoint sets $B_1,\ldots,B_s$, $N(A) \cap S$ is the union of the disjoint sets $N(B_1) \cap S_1, \ldots, N(B_s) \cap S_s$, and 
$D$ is additive on the union of disjoint sets. 
Further,
\begin{align*}
D(S) &= D(A) + D(N(A) \cap S) + D(S \backslash (A \cup N(A))) \\
&= D(A) + D(N(A) \cap S) + D(S_s),
\end{align*}
and, as $D(S_s) < D(S)/2$, 
\begin{equation}\label{e:da2}
D(A) + D(N(A) \cap S) \ge \frac{D(S)}{2}.
\end{equation}
Combining~\eqref{e:da1} and~\eqref{e:da2} gives the desired bound:
$D(A) \ge D(S)/(4x+4) \ge D(S)/8x $.
\end{proof}

We next use this lemma to obtain an upper bound on $h(G,k)$ for graphs $G$ with no short odd cycles.

\begin{proof}[Proof of Theorem~\ref{t:cyclegirth}]
We iteratively apply Lemma~\ref{l:g2} to obtain $k$ disjoint independent sets which are each incident to many edges. Let $S_1 = V$, and let $A_1$ be a subset of $S_1$ with the properties guaranteed by Lemma~\ref{l:g2}. Proceed for $k$ iterations, letting $S_i = V \backslash \bigcup_{j = 1}^{i-1} A_j$, to obtain $A_i \subset S_i$ per Lemma~\ref{l:g2} with large $D(A_i)$. By construction, $S_{k+1} = V \backslash \bigcup_{i = 1}^{k} A_i$, the sets $A_1, ..., A_k$ are independent, and for each $A_i$, 
 $$D(A_{i}) \geq \frac{D(S_i)}{8x_i}, \quad x_i=\left(\frac{n |S_i|}{D(S_i)}\right)^{\frac{1}{r}}.$$

By Lemma \ref{l:partpart}, we can assign each $v \in S_{k+1}$ to one of the $A_i$ and can make the graph $k$-partite by deleting the at most $D(S_{k+1})/k$ edges in parts. 

By construction, we have the recursive upper bound 
$$D(S_{i+1}) = D(S_i) - D(A_i) \le \left(1 - \frac{1}{8x_i} \right) D(S_i).$$
Let 
$\delta_i := \frac{D(S_i)}{n^2}.$
Since $|S_i| \le n$, the above relation yields the recursive inequality
\begin{equation}\label{e:recur}
\delta_{i+1} \le \delta_i \left( 1 - \frac18 \delta_i^{1/r} \right).
\end{equation}
Inequality~\eqref{e:recur} implies that if $\delta_l > \eps/2$ for some $\eps > 0$, then $$\delta_{l+1} - \delta_l < \frac{-\delta_l}{8} \left(\frac{\eps}{2} \right)^{1/r}.$$ 
If $\delta_i \le \eps \le 1$ and $\delta_j > \eps/2$ for $j > i$, then
$$\delta_j - \delta_i = \sum_{l = i}^j (\delta_{l + 1} - \delta_{l} ) < \sum_{l = i}^j \frac{- \delta_l}{8} \left( \frac{\eps}{2} \right)^{1/r} < \frac{(i-j)\delta_{i}}{16} \left( \frac{\eps}{2} \right)^{1/r}.$$
Thus, for $j - i \ge  8 \left( 2/\eps \right)^{1/r},$ 
$$\delta_j - \delta_i \le \frac{-8 \left(2/\eps\right)^{1/r}\delta_{i}}{16} \left( \frac{\eps}{2} \right)^{1/r} = \frac{-\delta_i}{2},$$
which yields
$\delta_j \le \delta_i/2 \le \eps/2.$
\newline \newline 
Note that $\delta_1 \le 1$. Let
$$u = \left\lfloor r \log_2 \left( \frac{k \ln 2}{32r} \right) \right \rfloor - 1.$$
We show that $\delta_{k+1} \le 1/2^u$. If $u < 0$, then we have $\delta_{k+1} \le \delta_1 \le 1 \le 1/2^u$. So we can suppose that $u \ge 0$. 
Using the above bound on the decay of $\delta_i$, letting $\eps = 2^{-i}$ for $i = 0, 1, ..., u-1$, we note that $\delta_{j} \le 1/2^u$ for $j := 8 \sum_{i = 0}^{u - 1} \left\lceil \left( \frac{2}{2^{-i}} \right)^{1/r} \right\rceil $, since
\footnotesize
$$j = 8 \sum_{i = 0}^{u - 1} \left\lceil \left( \frac{2}{2^{-i}} \right)^{1/r} \right\rceil = 16 \sum_{i = 0}^{u - 1} \left(2^{1/r}\right)^{i+1} = 16 \cdot 2^{1/r} \cdot \left(\frac{2^{(u+1)/r}-1}{2^{1/r}-1}\right) \overset{(*)}<  \frac{32(2^{(u+1)/r})}{(\ln 2)/r} \le  \frac{32r \left( \frac{\ln 2}{32r} \cdot k \right)}{\ln 2} < k + 1,$$
\normalsize
where $(*)$ follows since $e^x \ge 1 + x$ for all $x$ (so $2^{1/r} - 1 = e^{\ln 2/r} - 1 > \ln 2/r$). 
For this choice of $u$ we have that
$$\delta_{k+1} \le \frac{1}{2^u} \le 2^{1 - \lfloor r \log_2 \left( \frac{\ln 2}{8r} k \right) \rfloor} \le 4 \left( \frac{k \ln 2}{8r} \right)^{-r} = \frac{ 4\left( \frac{8r}{\ln 2} \right)^r }{k^r}.$$
This gives the desired bound on $h(G, k)$ with $c_r= 4 \left( 8r/\ln 2 \right)^r$:
$$h(G, k) \le \frac{D(S_{k+1})}{k} \le \frac{\delta_{k+1}n^2}{k} \le \frac{c_rn^2}{k^{r+1}} < \frac{4\left(12r\right)^{r}n^2}{k^{r+1}}.$$
\end{proof}
Theorem~\ref{t:cyclegirth} gives a bound on $h(G, k)$ when $G$ has odd girth larger than $2r+1$, which, as we show in the next subsection, is tight up to a factor depending only on $r$.
Our goal is to understand a less constrained family of graphs, those with a single fixed forbidden odd cycle. To do this, the following lemma shows that if a graph has a forbidden 
odd cycle $C_{2r+1}$, then we can delete a small number of edges to get rid of the next shorter odd cycles.  

\begin{lemma}\label{l:removecycle}
If a graph $G = (V, E)$ on $n$ vertices is $C_{2r+1}$-free, then $G$ can be made to have odd girth larger than $2r+1$ by removing $O_r(n^{3/2})$ edges.
\end{lemma}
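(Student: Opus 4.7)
The plan is to iteratively destroy short odd cycles, processing them in decreasing order of length. Setting $G_0 = G$, at step $i$ (for $i = 1, \ldots, r-1$) the current graph $G_{i-1}$ will be free of all odd cycles of length in $\{2r+3-2i, 2r+5-2i, \ldots, 2r+1\}$, and I wish to remove $O_r(n^{3/2})$ additional edges to destroy every $C_\ell$ with $\ell = 2r+1-2i$. Summing over the $r-1$ steps gives the desired $O_r(n^{3/2})$ total (and the base case $r=1$ is vacuous since $C_3$-free already means odd girth $>3$). To carry out step $i$, I take a maximum collection $\mathcal{C}_\ell$ of edge-disjoint $C_\ell$'s in $G_{i-1}$ and remove all $\ell|\mathcal{C}_\ell|$ edges in their union; by maximality, every $C_\ell$ of $G_{i-1}$ shares an edge with some member of $\mathcal{C}_\ell$ and so is destroyed.

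The core claim is that the subgraph $H_\ell$ spanned by these edges is $C_4$-free. Granting this, the Bondy--Simonovits theorem gives $e(H_\ell) = O(n^{3/2})$, hence $|\mathcal{C}_\ell| = O(n^{3/2})$, which is exactly what step $i$ requires. To prove the claim, I argue by contradiction: suppose $H_\ell$ contains a $C_4 = abcd$. Each of its four edges belongs to some cycle in $\mathcal{C}_\ell$, and the heart of the argument is the standard observation that two cycles sharing a single edge can be spliced into a new cycle of length (sum of lengths) $-2$; combining the member $C \in \mathcal{C}_\ell$ containing the edge $ab$ with the $C_4$ thus yields a cycle of length $\ell+2$. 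Since $\ell + 2 = 2r+3-2i$ is already a forbidden odd cycle length in $G_{i-1}$ (by the previous iterations, or by hypothesis when $i=1$), a non-degenerate splicing yields a contradiction.

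The main obstacle is handling degenerate configurations in which the cycle $C$ through $ab$ shares further vertices (and hence potentially further edges) with the $C_4$. The strategy is to exploit the edge-disjointness of $\mathcal{C}_\ell$: if two edges of the $C_4$ lie in the same member of $\mathcal{C}_\ell$, a short case analysis on how that $C_\ell$ meets $\{a,b,c,d\}$ forces another pair of members of $\mathcal{C}_\ell$ to share an edge, contradicting edge-disjointness (this is exactly how the $r=2$, $\ell=3$ case collapses: if $C$ passes through $c$ via edge $bc$, then the member of $\mathcal{C}_\ell$ assigned to $bc$ must equal $C$). When the cycles through different edges of the $C_4$ are distinct but still pass through extra vertices of the $C_4$, one can either redo the splicing using a different edge of the $C_4$ as the shared one, or use chords arising from the forced extra incidences to produce a $C_{\ell+2}$ directly inside the theta-graph-like union. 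Verifying that one of these outs is always available, uniformly in $\ell \in \{3, 5, \ldots, 2r-1\}$, is the delicate part of the proof.
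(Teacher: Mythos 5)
Your high-level strategy---bounding a maximal edge-disjoint family of $C_\ell$'s via the extremal number of short even cycles, by splicing a member $C_\ell$ to a short even cycle at a shared edge to produce a forbidden $C_{2r+1}$---is exactly the paper's strategy. But the core claim that $H_\ell$ is $C_4$-free is \emph{false}, and the ``delicate part'' you defer is precisely where the argument breaks.

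A concrete counterexample at $\ell=5$, $r=3$: let $G$ have vertex set $\{a,b,c,d,q,u,w\}$ and edge set the union of the two $5$-cycles $C_1 = a\text{-}b\text{-}c\text{-}q\text{-}d\text{-}a$ and $C_2 = c\text{-}d\text{-}w\text{-}a\text{-}u\text{-}c$. These $10$ edges are all distinct, so $\{C_1,C_2\}$ is an edge-disjoint (indeed maximum) family of $C_5$'s whose edge set is all of $G$, and $H_5 = G$ contains the $4$-cycle $abcd$ (edges $ab,bc,da\in C_1$ and $cd\in C_2$). Yet $G$ is $C_7$-free: since $|V(G)|=7$, a $C_7$ would be Hamiltonian and would have to use both edges at each of the four degree-$2$ vertices $b,q,u,w$, forcing $8$ distinct edges onto a cycle that has only $7$. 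Every attempted splice of $C_1$ or $C_2$ with the $C_4$ revisits one of $a,c,d$, so no $C_7$ ever materializes; your suggested repairs (re-splice on another $C_4$-edge, or exhibit a forced chord) cannot work because the conclusion they are meant to reach is simply false in this graph. So you cannot deduce $e(H_\ell)=O(n^{3/2})$ from $C_4$-freeness.

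The obstruction---that the $C_\ell$ attached to an edge of the short even cycle may collide with it in extra vertices, wrecking the splice---is exactly what the paper's proof is engineered to avoid, and that device is the missing ingredient here. The paper does not iterate over $\ell$; for each odd $\ell < 2r+1$ it fixes a maximal edge-disjoint family of $C_\ell$'s of size $t_\ell$ and samples a random vertex subset $U$ with probability $1/\ell$ per vertex, calling an edge of a family cycle \emph{special} when both its endpoints lie in $U$ but no other vertex of its $C_\ell$ does. Special edges are plentiful in expectation (at least $t_\ell/(e\ell)$), and crucially, if a $C_{2d}$ with $2d = 2r+3-\ell$ were built from special edges, every vertex of that $C_{2d}$ would be in $U$ while the $C_\ell$ attached to any one special edge would meet $U$ only at that edge's two endpoints---so the gluing is automatically a \emph{simple} $C_{2r+1}$, a contradiction. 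Hence the special edges form a $C_{2d}$-free graph, giving $t_\ell = O_r(\mathrm{ex}(n,C_{2d})) = O_r(n^{1+1/d})$, which summed over $\ell$ is $O_r(n^{3/2})$. The random subset $U$ is what enforces the vertex-disjointness that your direct splicing cannot guarantee.
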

\begin{proof}
For each odd integer $1<\ell<2r+1$, fix a maximal collection $\mathcal{C}_{\ell}$ of edge-disjoint copies of $C_{\ell}$ in $G$. Suppose we have $t_{\ell}$ such copies. To remove all 
copies of $C_{\ell}$ from $G$, we must delete at least $t_{\ell}$ edges (at least one edge in each edge-disjoint $C_{\ell}$). If we delete all $\ell t_{\ell}$ edges in these cycles, the resulting graph is $C_{\ell}$-free. Thus, the minimum number of edges to delete to make the graph $C_{\ell}$-free is within a factor $\ell$ of the size of any maximal collection of edge-disjoint copies of $C_{\ell}$. 

Consider a random subset $U \subset V$ formed by including each element with probability $p=1/\ell$ independently of the other vertices. Call an edge of a cycle in $\mathcal{C}_{\ell}$ {\it special} if both of its vertices are in $U$ and no other vertex of the cycle is in $U$. The probability that a given edge of a cycle in $\mathcal{C}_{\ell}$ is special is $p^2 (1-p)^{\ell-2}>1/(e\ell^2)$. Hence, by linearity of expectation, the expected number of special edges is at least $\ell t_{\ell}/(e\ell^2)=t_{\ell}/(e\ell)$. So we can fix a subset $U$ with at least $t_{\ell}/(e\ell)$ special edges. 

Let $2d:=2r+3-\ell$, so $d \in [2,r]$ is an integer as $\ell$ is odd. As the graph $G$ is $C_{2r+1}$-free, there is no cycle of length $2d$ of special edges. Indeed, suppose there is a cycle $C'$ of length $2d$ of special edges, and let $e$ be an edge of this cycle. Edge $e$ is by definition in a cycle $C''$ of length $\ell$ with none of its other vertices in $U$. So gluing together $C'$ and $C''$ and deleting the common edge $e$, we obtain a cycle of length $(2r+3-\ell)+\ell-2=2r+1$, contradicting the assumption that $G$ is $C_{2r+1}$-free. 

Recall that the \textit{extremal number} $\textrm{ex}(n,H)$ is the maximum number of edges an $H$-free graph on $n$ vertices can have. So the number of special edges is at most $$ \textrm{ex}(|U|,C_{2d}) \leq \textrm{ex}(n,C_{2d}) \leq 8(d-1)n^{1+1/d},$$
where the last bound is due to Verstra\"ete \cite{VE00}\footnote{There is a long history of bounding the extremal number of even cycles, including by Erd\H{o}s \cite{ER63}, Bondy and Simonovits \cite{BO74}, and most recently improvements by Pikhurkho \cite{PI12} and further by Bukh and Jiang \cite{BU17}.}. Hence, $t_{\ell}/(e\ell) \leq 8(d-1)n^{1+1/d}$, or equivalently,  
$\ell t_{\ell} \leq \ell^2 e 8(d-1)n^{1+1/d}$. So the number of edges we can delete 
from $G$ to make the resulting subgraph have odd girth larger than $2r+1$ is at most 
$$\sum_{3 \leq \ell \leq 2r-1,~\ell~\textrm{odd}} \ell t_{\ell} \leq \sum_{d=2}^{r} \ell^2 e 8(d-1)n^{1+1/d} \leq \sum_{d=2}^{r} 32e r^2d n^{1+1/d} \leq 100r^4n^{3/2}.$$ 

\end{proof}

From Lemma~\ref{l:removecycle} and Theorem~\ref{t:cyclegirth}, we immediately obtain Theorem~\ref{t:oddcycle}. 

We remark that Conlon, Fox, Sudakov, and Zhao \cite{CFSZ} recently improved the bound in Lemma \ref{l:removecycle} for $r=2$ to $o(n^{3/2})$.

\subsection{Lower bounds for odd cycles}  
In this subsection, we give a construction which shows that Theorem~\ref{t:oddcycle} is tight for sufficiently large $n$ to within a factor only depending on the length of the forbidden odd cycle. The construction is based on a construction of Alon~\cite{AL94} (see the discussions in \cite{AL98} and \cite{KRI06}) of a rather dense pseudorandom graph of large odd girth. 

\begin{defn}
An $(n, d, \lambda)$-\textit{graph} is a $d$-regular graph $G$ on $n$ vertices such that the second largest in absolute value eigenvalue has magnitude at most $\lambda$.
\end{defn}

To discuss the properties of this construction, we first recall the expander mixing lemma, a classical result in spectral graph theory. An early version is due to Alon and Chung \cite{AL88}. 

\begin{lemma}[Expander Mixing Lemma] \label{t:expander}
If $G=(V,E)$ is an $(n,d,\lambda)$-graph and $A,B \subset V$, then 
$$ \left|e(A, B) - \frac{d}{n}|A||B| \right| \le \lambda \sqrt{|A||B|}.$$
\end{lemma}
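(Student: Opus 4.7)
The plan is a standard spectral argument: since $(n,d,\lambda)$-graphs are defined through the spectrum of their adjacency matrix, I would translate $e(A,B)$ into a matrix bilinear form and then separate the contribution of the top eigenvector from the rest.

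First I would let $M$ denote the $n \times n$ adjacency matrix of $G$ and observe that, because $G$ is $d$-regular, the all-ones vector $\mathbf{1} \in \RR^n$ is an eigenvector of $M$ with eigenvalue $d$. Since $M$ is real symmetric, I may fix an orthonormal eigenbasis $v_1,\ldots,v_n$ with $v_1 = \mathbf{1}/\sqrt{n}$ and corresponding eigenvalues $d = \mu_1, \mu_2, \ldots, \mu_n$, where $|\mu_i| \le \lambda$ for every $i \ge 2$ by the hypothesis on the second largest (in absolute value) eigenvalue.

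Next I would express the quantity of interest as a bilinear form. Writing $\mathbf{1}_A, \mathbf{1}_B$ for the indicator vectors, the paper's convention $e(A,B) = |\{(a,b)\in A\times B : ab \in E\}|$ gives $e(A,B) = \mathbf{1}_A^{T} M \mathbf{1}_B$. Decomposing $\mathbf{1}_A = \sum_i \alpha_i v_i$ and $\mathbf{1}_B = \sum_i \beta_i v_i$ in the eigenbasis, the coefficients on the top eigenvector are $\alpha_1 = \langle \mathbf{1}_A, v_1 \rangle = |A|/\sqrt{n}$ and $\beta_1 = |B|/\sqrt{n}$. Consequently,
$$e(A,B) \;=\; \sum_{i=1}^n \mu_i \alpha_i \beta_i \;=\; \frac{d}{n}|A||B| + \sum_{i \ge 2} \mu_i \alpha_i \beta_i.$$

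The final step is to bound the error term. Using $|\mu_i| \le \lambda$ for $i \ge 2$, the Cauchy--Schwarz inequality, and Parseval's identity $\sum_i \alpha_i^2 = \|\mathbf{1}_A\|^2 = |A|$ (and likewise for $B$), I would estimate
$$\left|\sum_{i \ge 2} \mu_i \alpha_i \beta_i\right| \;\le\; \lambda \sum_{i \ge 2} |\alpha_i||\beta_i| \;\le\; \lambda \Bigl(\sum_{i \ge 2}\alpha_i^2\Bigr)^{1/2}\Bigl(\sum_{i \ge 2}\beta_i^2\Bigr)^{1/2} \;\le\; \lambda\sqrt{|A||B|}.$$
There is no substantive obstacle here: once the spectral decomposition is in place the proof is mechanical, and the only point requiring a line of care is verifying that the paper's counting convention for $e(A,B)$ matches the bilinear form $\mathbf{1}_A^{T} M \mathbf{1}_B$, which it does because $M_{uv}$ records precisely whether $uv$ is an edge.
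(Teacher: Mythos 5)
The paper does not prove this lemma; it states it as a classical result and cites Alon and Chung \cite{AL88}. Your spectral argument (decompose $\mathbf{1}_A,\mathbf{1}_B$ in an orthonormal eigenbasis of the adjacency matrix, peel off the top eigenvector, bound the tail via $|\mu_i|\le\lambda$, Cauchy--Schwarz, and Parseval) is exactly the standard proof, and it is correct, including the check that the paper's convention $e(A,B)=|\{(a,b)\in A\times B: ab\in E\}|$ matches the bilinear form $\mathbf{1}_A^{T}M\mathbf{1}_B$.
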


Note that if a graph $G$ on $n \le n'$ vertices is $\mcH$-free and no graph in $\mcH$ has isolated vertices, by adding $n' - n$ dummy vertices we can obtain a graph $G'$ on $n'$ vertices, where $h(n', k, \mcH)  \ge h(G, k)$. Since this holds for all such $G$ on $n$ vertices, we have the following observation, which will be relevant to our future discussion of blow-ups of a fixed graph.
\begin{prop} \label{p:monotone} 
Given a family of graphs $\mcH$, if no graph in $\mcH$ has isolated vertices and $n' \ge n$, then
$$h(n', k, \mcH) \ge h(n, k, \mcH).$$
\end{prop}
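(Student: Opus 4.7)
The plan is to use a simple padding argument with isolated vertices, which the paragraph immediately preceding the proposition already sketches. First I would fix any $\mathcal{H}$-free graph $G$ on $n$ vertices, and form $G'$ on $n'$ vertices by adding $n' - n$ isolated vertices to $G$. The key observation is that because no graph in $\mathcal{H}$ has an isolated vertex, a copy of any $H \in \mathcal{H}$ inside $G'$ cannot use any of the newly added vertices (each such vertex has degree $0$ in $G'$, but every vertex of $H$ has positive degree). Hence any copy of $H$ in $G'$ would already sit inside $G$, contradicting $\mathcal{H}$-freeness of $G$; so $G'$ is also $\mathcal{H}$-free.

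Next I would observe that $h(G', k) = h(G, k)$. Indeed, $G'$ has exactly the same edge set as $G$, so any set of edges whose removal makes $G$ into a $k$-colorable graph likewise makes $G'$ into a $k$-colorable graph (we can assign each of the extra isolated vertices to any color class we like), and conversely, any edge set witnessing $h(G', k)$ gives an edge set of the same size in $G$ that makes $G$ $k$-colorable. Therefore $h(n', k, \mathcal{H}) \geq h(G', k) = h(G, k)$.

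Taking the maximum over all $\mathcal{H}$-free graphs $G$ on $n$ vertices on the right-hand side then yields $h(n', k, \mathcal{H}) \geq h(n, k, \mathcal{H})$, as desired.

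I do not expect any obstacle here; the only thing to be careful about is the hypothesis that graphs in $\mathcal{H}$ have no isolated vertices, which is precisely what ensures the padding does not introduce new forbidden subgraphs. Without this hypothesis, the monotonicity could fail (e.g., if $\mathcal{H}$ contained $K_1$, then $G'$ would no longer be $\mathcal{H}$-free once $n' > n$, and in fact no graph on a positive number of vertices would be $\mathcal{H}$-free).
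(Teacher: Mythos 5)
Your proof is correct and matches the paper's approach exactly: the paper also pads $G$ with $n' - n$ isolated (``dummy'') vertices, relying on the absence of isolated vertices in $\mathcal{H}$ to preserve $\mathcal{H}$-freeness, and then takes the maximum over all such $G$. You spell out the reasoning slightly more explicitly (in particular that $h(G', k) = h(G, k)$), but there is no substantive difference.
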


The results which follow proceed towards the goal of showing that $h(n,k,C_{2r+1})=\Omega_r(n^{2}/k^{r+1})$.

We first note that for arbitrary $G = (V, E)$, we can compute $h(G[t], k)$ in terms of $h(G, k)$, where $G[t]$ is the \textit{$t$-blow-up of $G$}, the graph on $t|V|$ vertices given by the lexicographic product of $G$ with an empty graph on $t$ vertices. 

\begin{lemma}\label{l:blowup}
Let $G[t]$ be the $t$-blow-up of $G = (V, E)$. Then,
$$h(G[t], k) = t^2 h(G, k).$$
\end{lemma}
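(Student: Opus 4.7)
The plan is to prove the two inequalities $h(G[t],k)\le t^2h(G,k)$ and $h(G[t],k)\ge t^2h(G,k)$ separately; the first is straightforward while the second requires a short probabilistic averaging argument.

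For the upper bound, I would start from an optimal $k$-partition $V(G)=V_1\sqcup\cdots\sqcup V_k$ realizing $h(G,k)$, so that exactly $h(G,k)$ edges of $G$ have both endpoints in the same part. Writing $V(G[t])=V(G)\times[t]$, I would then define the lifted partition $W_i=V_i\times[t]$ for each $i$. By definition of the lexicographic blow-up, two vertices $(v,a)$ and $(w,b)$ with $v\neq w$ are adjacent in $G[t]$ iff $\{v,w\}\in E(G)$, so an edge $\{v,w\}$ of $G$ that lies inside some $V_i$ produces exactly $t^2$ edges inside $W_i$, while edges across parts of the original partition go across parts of the lifted partition. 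Summing over the $h(G,k)$ monochromatic edges of $G$ gives at most $t^2h(G,k)$ monochromatic edges in $G[t]$, hence the upper bound.

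For the lower bound I would go in the opposite direction by a randomized rounding. Fix an optimal $k$-coloring $\phi\colon V(G[t])\to[k]$ of $G[t]$ achieving $h(G[t],k)$ monochromatic edges, and for each $v\in V(G)$ let $p_v(i):=\frac{1}{t}\,\bigl|\{j\in[t]:\phi(v,j)=i\}\bigr|$, so $\sum_i p_v(i)=1$. The number of monochromatic edges of $\phi$ equals
\[
\sum_{\{v,w\}\in E(G)}\sum_{i=1}^{k} t\,p_v(i)\cdot t\,p_w(i)=t^2\sum_{\{v,w\}\in E(G)}\sum_{i=1}^{k}p_v(i)p_w(i).
\]
Now I would define a random coloring $\psi\colon V(G)\to[k]$ by independently giving each $v$ color $i$ with probability $p_v(i)$. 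The expected number of monochromatic edges of $\psi$ is exactly $\sum_{\{v,w\}\in E(G)}\sum_i p_v(i)p_w(i)$, which by the displayed identity equals $t^{-2}$ times the number of monochromatic edges of $\phi$. Hence some coloring $\psi$ achieves at most $h(G[t],k)/t^2$ monochromatic edges, giving $h(G,k)\le h(G[t],k)/t^2$.

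Combining the two bounds yields $h(G[t],k)=t^2h(G,k)$. The only place a subtlety might arise is in the lower bound, where one has to be careful that the random rounding is well-defined and that the probability calculation really pairs up correctly with the sum over the $t\times t$ edge-bundle in $G[t]$; this is just the identity above, so no genuine obstacle remains.
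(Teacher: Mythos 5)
Your proof is correct, and the lower bound is essentially the same argument as the paper's, just phrased in the dual language of randomized rounding. The paper averages over the $t^{|V|}$ transversal copies of $G$ inside $G[t]$ (one vertex from each fibre): each such copy has at least $h(G,k)$ monochromatic edges, and each edge of $G[t]$ lies in $t^{|V|-2}$ copies, giving $h(G[t],k)\geq t^{|V|}\cdot h(G,k)/t^{|V|-2}=t^{2}h(G,k)$. Picking a uniformly random transversal copy induces precisely your random coloring $\psi$ with marginals $p_v(i)$ (independence across $v$ because the transversal choices in distinct fibres are independent), and your identity on the monochromatic edges of $\phi$ is exactly the double-count in the paper, so the two presentations differ only in whether the averaging is bookkept combinatorially or probabilistically.
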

\begin{proof}
By taking the blow-up of any vertex partition of $G$ into $k$ parts, we see that $$h(G[t], k) \le t^2 h(G, k).$$
To complete the proof, we next show the reverse inequality. Consider a vertex partition $P$ of $G[t]$ into $k$ parts. 
Consider a copy of $G$ in $G[t]$ with exactly one vertex in each of the $|V|$ parts of order $t$. Each such copy has at least $h(G, k)$ of its edges inside parts of $P$. The number of such copies of $G$ is $t^{|V|}$ and each edge of $G[t]$ is in exactly $t^{|V|-2}$ such copies of $G$. Thus, at least $h(G, k)t^{|V|}/t^{|V|-2}=t^2 h(G, k)$ edges of $G$ must be inside parts of $P$. Hence, $h(G[t], k) \geq t^2 h(G, k)$.
\end{proof}

It will be helpful to introduce the following definition. 
\begin{defn}
A family $\mcH$ of graphs is \textit{closed under homomorphism} if for any $H \in \mcH$ and graph homomorphism $\phi: H \rightarrow H'$, graph $H'$ is in $\mcH$.
\end{defn}

We want to establish a result similar to Lemma~\ref{p:monotone} for $t$-blow-ups.

\begin{lemma}
If $\mcH$ is closed under homomorphism and $t$ is a positive integer, then
$$h(tn, k, \mcH) \ge t^2 h(n, k, \mcH).$$
\end{lemma}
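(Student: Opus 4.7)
The plan is to take an extremal $\mcH$-free graph on $n$ vertices and show that its $t$-blow-up is still $\mcH$-free, then apply the previous lemma on blow-ups. Concretely, let $G$ be an $\mcH$-free graph on $n$ vertices with $h(G,k) = h(n,k,\mcH)$. I claim that $G[t]$ is $\mcH$-free, and then by Lemma~\ref{l:blowup} we immediately get
\[
h(tn,k,\mcH) \ge h(G[t],k) = t^2 h(G,k) = t^2 h(n,k,\mcH).
\]

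The only real content is the $\mcH$-freeness of $G[t]$. Consider the natural projection map $\pi: G[t] \to G$ sending each vertex of $G[t]$ to its ``parent'' in $G$. Because vertices in the same blown-up class are non-adjacent in $G[t]$, any edge $(u,w)$ of $G[t]$ satisfies $\pi(u) \neq \pi(w)$ and $(\pi(u),\pi(w)) \in E(G)$, so $\pi$ is a graph homomorphism. Now suppose for contradiction that $H \subseteq G[t]$ for some $H \in \mcH$. Restricting $\pi$ to $V(H)$ gives a graph homomorphism $\pi|_H: H \to G$; let $H'$ be its image, which is a subgraph of $G$, and view $\pi|_H$ as a surjective homomorphism $H \to H'$. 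Since $\mcH$ is closed under homomorphism and $H \in \mcH$, we conclude $H' \in \mcH$. But $H' \subseteq G$, contradicting that $G$ is $\mcH$-free.

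There is no real obstacle here: the argument hinges on exactly the hypothesis that $\mcH$ is closed under homomorphism, used to pass from a copy of some $H \in \mcH$ inside $G[t]$ to a copy of its projected image inside $G$. The only small point to verify carefully is that the blow-up projection is genuinely a homomorphism, which follows from the definition of the lexicographic product with an empty graph (no edges within a blown-up class).
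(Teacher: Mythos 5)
Your proof is correct and follows essentially the same route as the paper's: blow up an extremal $\mcH$-free graph and invoke Lemma~\ref{l:blowup}. The paper simply asserts that $G[t]$ is $\mcH$-free without justification, while you supply the (correct) reason via the projection homomorphism $\pi:G[t]\to G$ and the closure hypothesis; otherwise the two arguments are identical.
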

\begin{proof}
Consider a graph $G$ on $n$ vertices which is $\mathcal{H}$-free. Its $t$-blow-up $G[t]$ is also $\mathcal{H}$-free. Hence, $h(G[t],k)\leq h(tn,k,\mathcal{H})$. Furthermore, by Lemma~\ref{l:blowup}, we see that $h(G[t],k)=t^{2}h(G,k)$ and therefore $t^{2}h(G,k)\leq h(tn,k,\mathcal{H})$ holds for any $\mathcal{H}$-free graph $G$. 
By taking the maximum over the left hand side of this inequality, we obtain the desired inequality.
\end{proof}

We use the following result, extending Alon's construction of a pseudorandom triangle-free graph which is as dense as possible to one of large odd girth~\cite{KRI06, AL98}. 

\begin{lemma}[\S 3,~\cite{AL98}] \label{t:girth} 
For each positive integer $r$ there is $c_r$ such that the following holds. For every integer $a \geq 2$ and $N=2^{(2r+1)a}$, there is an $(N, d, \lambda)$-graph $G$ of odd girth larger than $2r+1$ such that $d \geq \frac{1}{8}N^{2/(2r+1)}$ and $\lambda \le c_r\sqrt{d}$.
\end{lemma}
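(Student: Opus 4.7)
The plan is to produce $G$ explicitly as a Cayley graph on the abelian group $\Gamma = \FF_q^{2r+1}$, where $q = 2^a$, so that $|V(G)| = q^{2r+1} = N$ automatically. The connection set $S \subset \Gamma$ will be taken to be the image of an algebraic parameterization $\phi : \FF_q^{2} \setminus Z \to \Gamma$ of a two-dimensional affine variety, for example a ``tensored moment curve'' in which successive coordinates are odd-power polynomials in two parameters tailored to the characteristic-$2$ Frobenius. Three things must then be verified: (i) $|S| \geq q^{2}/8 = N^{2/(2r+1)}/8$, giving the degree bound; (ii) no sum of an odd number $m \leq 2r+1$ of elements of $S$ equals $0$, giving the odd-girth bound (closed odd walks in a Cayley graph correspond to such relations); and (iii) $\bigl|\sum_{s \in S} \chi(s)\bigr| \leq c_r \sqrt{|S|}$ for every nontrivial additive character $\chi$ of $\Gamma$, giving the spectral bound, since the eigenvalues of a Cayley graph on an abelian group are precisely these character sums.

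For the degree count, the strategy is to show $\phi$ is generically injective: collisions $\phi(x_1,y_1) = \phi(x_2,y_2)$ lie on a variety cut out by $2r+1$ polynomial equations of bounded degree and so form a set of size $O_r(q)$, negligible compared with $q^2$ for $a \geq 2$ and $r$ fixed. For the girth, a closed odd walk of length $m \leq 2r+1$ translates into a system of polynomial equations $\sum_{i=1}^{m} \phi_j(x_i, y_i) = 0$ for $j = 0, 1, \ldots, 2r$. Because the coordinates of $\phi$ are chosen as odd-power monomials whose exponents are spaced to interact with the Frobenius $t \mapsto t^2$ (which is an additive field automorphism in characteristic $2$), these equations combine into a Vandermonde-type system, which, by an elementary symmetric-function argument, forces the multiset $\{(x_i,y_i)\}$ to pair off and hence contradicts odd $m \leq 2r+1$. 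For the spectral bound, pulling back a nontrivial character $\chi$ along $\phi$ expresses $\sum_{s \in S} \chi(s)$ as a two-variable exponential sum of a polynomial of degree $O_r(1)$, and then Deligne's theorem (or, after slicing in one parameter, classical Weil bounds for curves over finite fields) produces the required square-root cancellation.

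The main obstacle is arranging the parameterization $\phi$ so that the girth condition and the Weil-type spectral estimate hold \emph{simultaneously}: the monomial exponents must be matched to the characteristic-$2$ Frobenius tightly enough that no short odd Vandermonde relation exists, while the resulting variety must remain sufficiently generic (smooth, irreducible of the right dimension) for Deligne/Weil to apply with degree constants depending only on $r$. Once the correct $\phi$ is pinned down as in \S 3 of \cite{AL98}, the three required properties follow from separate but essentially elementary verifications about $\phi$, so the genuine work is concentrated in the algebraic design of the connection set; the rest of the argument is then bookkeeping in linear algebra and character theory.
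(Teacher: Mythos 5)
The first thing to note is that the paper does not prove this lemma: it is quoted as a known black box from \S 3 of Alon--Kahale \cite{AL98} (building on Alon's earlier pseudorandom triangle-free construction \cite{AL94}). So there is no "paper's own proof" to compare against; you are being asked to reconstruct a result from the literature. Your framework is the right one — a Cayley graph on $\FF_2^{(2r+1)a}$, eigenvalues computed as character sums $\sum_{s\in S}\chi(s)$, and odd girth governed by the absence of short odd-length relations $\sum_{i=1}^m s_i=0$ with $s_i\in S$ (which, since a closed odd walk contains an odd cycle and $S=-S$ automatically in characteristic $2$, does control the odd girth).

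But there is a genuine gap, and you flag it yourself: the connection set is never actually constructed. You describe it only as ``a tensored moment curve \ldots tailored to the characteristic-$2$ Frobenius'' and then write that the correct $\phi$ is ``pinned down \ldots in \S 3 of \cite{AL98}.'' That step is the entire content of the lemma; without an explicit $\phi$, none of your three verifications (degree count, Vandermonde/odd-girth, Weil/Deligne) can be carried out, and the three constraints pull against each other in a way that cannot be finessed generically. There is also a specific quantitative problem with the spectral step as you sketch it: slicing $\sum_{x,y}\chi(\phi(x,y))$ in one variable and applying the one-dimensional Weil bound gives only $O(q\cdot\sqrt{q})=O(q^{3/2})\approx d^{3/4}$ over the $\approx q^2$ terms, not the required $O(\sqrt{d})\approx q$. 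To get square-root cancellation in both variables you would need either the full strength of Deligne for a suitable smooth projective model (whose smoothness and geometric irreducibility would then have to be checked for the actual $\phi$, with constants depending only on $r$), or — as in the Alon and Alon--Kahale arguments — a much more specific multiplicative structure on $S$, roughly scalar multiples of a moment-type curve by a carefully chosen set $W_0\subset\FF_{2^a}$ of density about $1/2$ whose nontrivial Fourier coefficients are controlled directly, with an auxiliary power map (the ``$x\mapsto x^7$'' trick) used to engineer the sum-free and Fourier properties simultaneously. As written, the proposal correctly identifies the bookkeeping but leaves both the construction and the hardest estimate unaddressed.
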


We use this lemma to get a lower bound on $h(n, k, \mcH)$ with $\mcH = \{C_3, C_5, \ldots, C_{2r+1}\}$.

\begin{proof}[Proof of Theorem~\ref{c:lb}]

Let $a=\lceil \log_2(2 c_r k) \rceil$, where $c_r$ is chosen as in Lemma~\ref{t:girth}, and $N=2^{(2r+1)a}$. By Lemma~\ref{t:girth}, there is a $(N, d, \lambda)$-graph $H$ with $d \ge \frac18 N^{2/(2r+1)}$ and $\lambda \leq c_r\sqrt{d}$. 
For $n \geq N$, let $t = \lfloor n/N \rfloor$ and let $n_0 = N t$, so $n_0 \leq n$. Let $G = H[t]$ be the balanced $t$-blow up of $H$, so $|V(G)| = n_0$. 
Since $H$ has odd girth larger than $2r+1$, $G$ also has odd girth larger than $2r+1$. Let 
$\mcH = \{C_3, C_5, \ldots, C_{2r+1}\}$. By Proposition~\ref{p:monotone} and Lemma~\ref{l:blowup}, we have 
$$h(n, k, \mcH) \ge h(n_0, k, \mcH) \ge h(G, k) = h(H[t], k) = t^2 h(H, k).$$
We give a lower bound on $h(H, k)$ which implies the desired lower bound on $h(n, k, \mcH)$.
Consider a $k$-partition $V(H) = V_1 \sqcup \cdots \sqcup V_k$ that minimizes $\sum_{i = 1}^k e(V_i)$, which is the minimum number of edges to delete 
from $H$ to obtain a $k$-colorable subgraph. By Lemma~\ref{t:expander}, we have
\begin{eqnarray*}2\sum_{i = 1}^k e(V_i) & = & \sum_{i = 1}^k e(V_i, V_i) \ge \sum_{i = 1}^k \left( \frac{d}{N}|V_i |^2 - \lambda|V_i| \right) = \frac{d}{N} \sum_{i = 1}^k |V_i|^2 - \lambda N \overset{(*)}\ge \frac{dN}{k} - \lambda N \overset{(**)}\ge \frac{dN}{2k} \\ & \ge &  \Omega_r \left( N^2/k^{r+1} \right).\end{eqnarray*}
Here $(*)$ follows by convexity of $f(x) = x^2$ and $(**)$ follows from $\lambda \le c_r\sqrt{d} \leq \frac{d}{2k},$ 
which in turn follows from $N \geq (2c_rk)^{2r+1}$ and $d \ge \frac18 N^{2/(2r+1)}$. Hence, 
$h(n, k, \mcH) \ge t^2 h(H, k) = \Omega_r \left( n^2/k^{r+1} \right)$, which completes the proof. 
\end{proof}

We can strengthen Theorem~\ref{c:lb} when $H$ is a triangle. Guo and Warnke~\cite{GUO20} show the existence of triangle-free graphs with discrepancy like random graphs by using a semi-random variant of the triangle-free process (studied in~\cite{BOH99}).

\begin{thm}[Theorem 4,~\cite{GUO20}]\label{t:trianglefree}
There exist $\beta_0, D_0 > 0$ such that for all $\gamma, \delta \in (0, 1], \beta \in (0, \beta_0)$ and $C \ge D_0/(\delta^2\sqrt{B}\gamma)$, the following holds for all $n \ge n_0(\gamma, \delta, \beta, C)$ with $\rho := \sqrt{\beta\log n/n}$: for any $n$-vertex graph $G'$, there exists a triangle-free subgraph $G \subset G'$ on the same vertex set such that 
$$e_{G}(A, B) = (1 \pm \delta)\rho e_{G'}(A, B)$$
for all vertex-sets $A, B \subset V(G')$ with $|A| = |B| = \lceil C \sqrt{n\log n}\rceil$ and $e_G'(A, B) \ge \gamma|A||B|$.
\end{thm}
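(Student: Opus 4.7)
The plan is to analyze a \emph{semi-random triangle-free process}. Starting with $G = (V(G'), \emptyset)$, at each step $i$ I would select a uniformly random edge from the set of \emph{open} edges (edges of $G'$ whose addition to $G$ creates no triangle in $G$) and add it to $G$. I would run the process for a carefully chosen number of steps so that the resulting $G$ has the edge-density $\rho = \sqrt{\beta \log n/n}$ relative to $G'$, for $\beta$ small enough that open edges still exist with high probability.

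First, I would set up the heuristic trajectory. For any pair $A, B \subset V(G')$, let $Q_i(A,B)$ denote the number of open edges between $A$ and $B$ after step $i$. A mean-field analysis suggests that, so long as the process has not terminated, $Q_i(A,B)/e_{G'}(A,B)$ closely tracks a deterministic function $q(t)$ (with $t$ a suitably rescaled time) that coincides with the trajectory predicted by Bohman's analysis \cite{BOH99} of the triangle-free process: each added edge closes a typical open edge with probability proportional to the current codegree, and the codegree itself evolves predictably. At the terminal time $t_\ast$ one should have the added edges between $A$ and $B$ equal to $(1 \pm o(1))\rho\, e_{G'}(A,B)$.

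Second, the core of the argument is concentration. Since the number of candidate pairs $(A,B)$ with $|A|=|B| = \lceil C\sqrt{n \log n}\rceil$ is at most $\exp\bigl(O(\sqrt{n}\,\log^{3/2} n)\bigr)$, I need concentration for each $e_G(A,B)$ with much smaller failure probability. I would define a Doob martingale $M_i = Q_i(A,B) - \widetilde{Q}(i)$ around a smooth deterministic trajectory and simultaneously track (i) the discrepancy $M_i$ for each pair, (ii) pairwise codegree counts in $G$, and (iii) the auxiliary ``cherry'' counts that govern the edge-closure rate. Introducing a stopping time $\tau$ at the first step when any of these quantities leaves a small tube around its trajectory, I would show that on $\{i < \tau\}$ the one-step conditional mean of $M_i$ matches the predicted derivative up to $n^{-\varepsilon}$ and that the one-step increments satisfy $|M_{i+1}-M_i| \le O(\rho\, n \log n)$ (using $e_{G'}(A,B) \ge \gamma |A||B|$ to guarantee enough open edges to average over). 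Applying Freedman's martingale inequality to the stopped process then gives exponential concentration strong enough to union-bound over all such $(A,B)$.

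The main obstacle is the joint bootstrapping: the increment bound for the $(A,B)$-martingale relies on codegree control that is itself only martingale-controlled, and a single failure can cascade across levels. Following Guo--Warnke \cite{GUO20} (and ultimately Bohman \cite{BOH99}), I would organize the argument around a single omnibus stopping time that encodes ``every tracked quantity stays on trajectory,'' argue inductively in $i$ that this stopping time is $\infty$ with super-polynomially small failure probability, and then, on this event, directly translate the open-edge trajectory at the terminal time into the two-sided bound $e_G(A,B) = (1\pm\delta)\rho\, e_{G'}(A,B)$ for every dense pair.
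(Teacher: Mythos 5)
This statement is not proved in the paper at all; it is quoted as Theorem~4 of Guo--Warnke~\cite{GUO20} and invoked as a black box in the proof of Proposition~\ref{p:cyclefree}. There is therefore no ``paper's own proof'' to compare your attempt against, so what follows is a review of the attempt against the cited source.

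What you describe in your first paragraph is Bohman's \emph{sequential} triangle-free process (one uniformly random open edge per step), not the \emph{semi-random} variant that the theorem's name and \cite{GUO20} refer to. The semi-random process proceeds in rounds: in each round every currently open edge is independently activated with a small probability $p$, and the few activated edges that would close a triangle are then discarded. The within-round independence is what makes the per-pair failure probability $\exp(-\Omega(\sqrt{n}\log^{3/2}n))$ attainable by elementary Chernoff-type bounds, which is exactly what the union bound over the $\exp(O(\sqrt{n}\log^{3/2}n))$ pairs $(A,B)$ demands, and it is also what makes the argument transfer cleanly to an arbitrary host graph $G'$ in place of $K_n$. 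Your Freedman-plus-omnibus-stopping-time outline is a genuinely plausible alternative route --- and the variance accounting you gesture at does produce an exponent of order $C^2\delta^2\sqrt{\beta}\gamma^{-1}\sqrt{n}\log^{3/2}n$, consistent with the hypothesis on $C$ --- but as written the sketch substitutes a different process for the one \cite{GUO20} analyze without justifying that the same trajectory and concentration estimates carry over, and it leaves open precisely the steps where such proofs live or die: pinning down $q(t)$, choosing the full family of tracked variables (open-pair counts relative to $G'$, codegrees, cherry counts), establishing the self-correcting drift estimates, and verifying the increment and variance bounds uniformly up to the stopping time. So this is a plan in the right family of ideas rather than a proof, and it does not reproduce the semi-random structure that is the mechanism behind the cited theorem.
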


This yields an improved lower bound on $h(n, k, C_3)$.

\begin{prop}\label{p:cyclefree}
There exists an absolute constant $c$ such that for all $n \ge ck^2 \log k$, there exists a triangle-free graph $\Gamma$ on $n$ vertices with $h(\Gamma, k) = \Omega(n^2/k^2)$.
\end{prop}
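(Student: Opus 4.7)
The plan is to apply Theorem~\ref{t:trianglefree} with $G' = K_{n_0}$ for $n_0 \asymp k^2\log k$ to obtain a triangle-free ``pseudorandom'' graph $\Gamma_0$ on $n_0$ vertices, use its discrepancy property to show directly that $h(\Gamma_0,k) = \Omega(n_0^2/k^2)$, and then handle all larger $n$ by taking a blow-up (Lemma~\ref{l:blowup}) and padding with isolated vertices (as in Proposition~\ref{p:monotone}, noting that isolated vertices affect neither triangle-freeness nor $h(\cdot,k)$).

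To execute the core step, I would fix $\delta = \gamma = 1/2$, then pick $\beta$ and $C$ as in Theorem~\ref{t:trianglefree} and set $n_0 = \lceil c k^2 \log k\rceil$, where $c$ is the absolute constant we are free to choose. Theorem~\ref{t:trianglefree} applied to $K_{n_0}$ then yields a triangle-free $\Gamma_0$ on $n_0$ vertices with $e_{\Gamma_0}(A,B) \ge \tfrac{1}{2}\rho\, e_{K_{n_0}}(A,B)$ for all pairs $A,B$ of size exactly $m := \lceil C\sqrt{n_0\log n_0}\rceil$, where $\rho = \sqrt{\beta\log n_0/n_0}$. Given any $k$-partition $V(\Gamma_0) = V_1 \sqcup \cdots \sqcup V_k$, I would call $V_i$ \emph{big} if $|V_i| \ge 2m$, and argue that, provided $c$ is chosen large compared to $C^2$, the small parts cover in total at most $2mk < n_0/2$ vertices, so big parts cover at least $n_0/2$ vertices. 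For each big $V_i$, partition it into $q_i = \lfloor |V_i|/m\rfloor \ge 2$ blocks of size exactly $m$ (plus a remainder) and apply the discrepancy bound to all ordered pairs of distinct blocks; this gives $e(V_i) \gtrsim \rho |V_i|^2$. Summing over big parts and using convexity, $\sum_i e(V_i) \ge \Omega(\rho) \sum_{\text{big}} |V_i|^2 \ge \Omega(\rho\, n_0^2/k)$, so $h(\Gamma_0,k) = \Omega(\rho\, n_0^2/k)$.

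The last numerical step is to check that $\rho\, n_0^2/k = \Omega(n_0^2/k^2)$ in the regime $n_0 \asymp k^2\log k$. This reduces to $\rho = \Omega(1/k)$, i.e.\ $\sqrt{\log n_0/n_0} = \Omega(1/k)$, which becomes $n_0 = O(k^2\log n_0)$, and this is indeed satisfied when $n_0 \asymp k^2\log k$. For a general $n \ge ck^2\log k$, set $t = \lfloor n/n_0\rfloor \ge 1$, form $\Gamma_0[t]$, which is triangle-free on $n_0 t$ vertices with $h(\Gamma_0[t],k) = t^2 h(\Gamma_0,k) = \Omega((n_0 t)^2/k^2) = \Omega(n^2/k^2)$ (since $n_0 t \ge n/2$), and pad with $n - n_0 t < n_0$ isolated vertices to land on exactly $n$ vertices.

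The main delicacy is the calibration of $c$: it must simultaneously ensure (i) $n_0 \ge n_0(\gamma,\delta,\beta,C)$ so Theorem~\ref{t:trianglefree} applies, (ii) $2mk < n_0/2$ so that small parts are negligible (which, since $\log n_0 = O(\log k)$, forces $c \gg C^2$), and (iii) that the resulting lower bound $\Omega(\rho\, n_0^2/k)$ is genuinely of order $n_0^2/k^2$ rather than smaller. Beyond this bookkeeping, the only subtlety in the partition argument is that Theorem~\ref{t:trianglefree} provides discrepancy only for sets of the exact size $m$, which is why each big $V_i$ must be tiled by equal-sized blocks before the bound can be applied.
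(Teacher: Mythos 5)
Your proposal is correct and follows essentially the same route as the paper: apply Theorem~\ref{t:trianglefree} to a complete graph on $\asymp k^2\log k$ vertices to produce a pseudorandom triangle-free graph, use the discrepancy property on a $k$-partition to obtain $h(\Gamma_0,k)=\Omega(\rho\,n_0^2/k)=\Omega(n_0^2/k^2)$, and then blow up to reach general $n$. You supply more explicit bookkeeping than the paper's terse ``any $k$-partition has relatively dense parts'' (your big/small split, the tiling of each large part into blocks of size exactly $m$ so that the fixed-size discrepancy condition applies, and the final convexity step), and your choice of equal blow-up $\Gamma_0[t]$ plus isolated-vertex padding is a cleaner way to invoke Lemma~\ref{l:blowup} than the paper's balanced blow-up. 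These are expository refinements rather than a different argument.
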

\begin{proof}
We apply Theorem~\ref{t:trianglefree} with $G' = K_N$, $\gamma = 1$, and $\delta = 1/2$. This yields a triangle-free graph $G \subset K_N$ such that for $A \subset V$ with $|A| = \lceil C \sqrt{N\log N}\rceil$, we have that $$e_{G}(A) \ge \frac12 \sqrt{\beta \log n / N} \cdot |A|^2,$$
i.e. $G$ resembles a random graph with edge probability $\Theta(\sqrt{\log N/ N})$ and $\alpha(G) \le 2 \lceil C \sqrt{N\log N}\rceil$.
We choose $N = c k^2 \log k$ for sufficiently large absolute constant $c$. Any $k$-partition of $G$ as above has relatively dense parts. Precisely, we have at least $\frac14 \sqrt{\beta \log N / N} \cdot \frac{N^2}{k}$ edges internal to any $k$-partition. Then, taking a balanced blowup of $G$ (in which each part has either $\lfloor n/N \rfloor$  or $\lceil n/N \rceil$ vertices) gives by Lemma~\ref{l:blowup} a triangle-free graph $\Gamma$ on $n$ vertices with $h(\Gamma, k) = \Omega(n^2/k^2)$.
\end{proof}

Note that Proposition~\ref{p:cyclefree} is essentially best possible. This follows from the construction of Kim~\cite{KIM95}, refined by Fiz Pontiveros, Griffiths and Morris~\cite{FIZP} of triangle-free graphs on at most $(4 + o(1))k^2 \log k$ vertices with chromatic number at most $k$. 

\section{Applications to Other Forbidden Subgraphs}\label{s:graphhom}

So far, we have only proved bounds on $h(n, k, H)$ when $H$ is an odd cycle or clique. In this section, we obtain bounds for a broader class of graphs $H$.  In  particular, we prove that if $H'$ is a subgraph of a fixed graph $H$, and $H$ has a homomorphism to $H'$, then $h(n,k,H)$ is within $o(n^2)$ of $h(n,k,H')$. 

\subsection{Graph Homomorphisms}\label{s:homgen}

To obtain these results, we use the graph removal lemma, which first appeared in 
\cite{ADLRY,Fur94}. It extends the triangle removal lemma of Ruzsa and Szemer\'edi (see the survey \cite{CON13} for details). 

\begin{thm}[Graph removal lemma]
\label{t:grl}
For any graph $H$ on $h$ vertices and any $\varepsilon > 0$, there exists $\delta > 0$ such that any graph on $n$ vertices that contains at most $\delta n^{h}$ copies of $H$ can be made $H$-free by removing at most $\varepsilon n^2$ edges.
\end{thm}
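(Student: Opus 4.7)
The plan is to prove the graph removal lemma by the classical strategy of Szemerédi regularity plus a counting (embedding) argument. Given $H$ on $h$ vertices and $\varepsilon>0$, I would fix auxiliary parameters $d$ and $\eta$ depending on $\varepsilon$ and $h$ (with $\eta \ll d \ll \varepsilon$, chosen precisely at the end), apply Szemerédi's regularity lemma to $G$ with regularity parameter $\eta$ to get an equitable partition $V(G)=V_1\sqcup\cdots\sqcup V_M$ in which all but at most $\eta\binom{M}{2}$ pairs $(V_i,V_j)$ are $\eta$-regular, and then pass to a cleaned subgraph $G'\subset G$.

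The cleaning step discards the following edges from $G$: (a) all edges with both endpoints in the same $V_i$; (b) all edges between pairs $(V_i,V_j)$ that fail to be $\eta$-regular; and (c) all edges between $\eta$-regular pairs whose density is less than $d$. Standard counting shows these three contributions to $|E(G)\setminus E(G')|$ are bounded by $n^2/M$, $\eta n^2$, and $d\binom{n}{2}$ respectively, which together are at most $\varepsilon n^2$ once $M$ is large, $\eta$ is small, and $d<\varepsilon/3$.

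The crux is the counting lemma: if $G'$ still contains a copy of $H$, then because every edge of $H$ is witnessed by an $\eta$-regular pair of density at least $d$, a standard greedy embedding argument (embed the vertices of $H$ one at a time, at each stage using $\eta$-regularity to show that a positive fraction of candidate extensions preserve large common neighborhoods in the parts of remaining vertices of $H$) produces at least $c\,n^h$ labeled copies of $H$ in $G$, where $c=c(d,\eta,h)>0$ can be taken of the form $(d/2)^{e(H)}(\eta')^{h}$ for some $\eta'$ depending on $d$ and $h$. Taking $\delta<c/2$ contradicts the assumption that $G$ has at most $\delta n^h$ copies of $H$. Hence $G'$ is $H$-free.

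The main obstacle is calibrating the parameters for the counting step: the embedding argument loses a factor roughly $\eta |V_i|$ at each of the $h$ steps, while requiring that common neighborhoods remain large enough for the next application of regularity, so one must choose $\eta$ much smaller than a polynomial in $d$ and a function of $h$ (e.g.\ $\eta\le d^h/(2h)$). The order of quantifier choice is therefore: $h$ is given, then $\varepsilon$ determines $d=\varepsilon/3$, then $d$ and $h$ determine $\eta$, then the regularity lemma outputs an upper bound $M_0=M_0(\eta)$ on the number of parts, and finally $\delta=\delta(h,M_0,c)$ is chosen small enough so that the counting lemma forces $G'$ to be $H$-free.
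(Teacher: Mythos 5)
The paper does not prove this statement at all: it quotes the graph removal lemma as a known black box, citing Alon--Duke--Lefmann--R\"odl--Yuster and F\"uredi for its origin and the Conlon--Fox survey for proofs. Your proposal is therefore not ``the paper's approach'' but rather the standard regularity-based proof from the literature, and as a proof plan it is correct: the three-step scheme (regularity partition with parameter $\eta$, cleaning of internal/irregular/sparse edges totalling at most $\varepsilon n^2$, and the counting lemma applied to any surviving copy of $H$ to produce $\Omega_{d,\eta,h,M}(n^h)$ copies) is exactly how the lemma is proved, and you have the quantifier order right, including the important point that $\delta$ may depend on the regularity bound $M_0$ and hence ends up tower-type in $1/\varepsilon$. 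Two small points a full writeup would need to address: (i) after cleaning, adjacent vertices of a surviving copy of $H$ necessarily lie in distinct parts, but non-adjacent vertices may share a part, so you should either invoke a counting lemma that permits repeated parts or note that degenerate copies (with coincident images) number only $O(n^{h-1})$ and are negligible; (ii) the constant $c$ in the counting step is really of the form $c(d,\eta,h)\cdot M^{-h}$, so it is cleaner to state the conclusion as at least $c(d,\eta,h)(n/M)^h$ copies and then fold $M_0^{-h}$ into $\delta$, which is consistent with what you wrote at the end. Neither point is a gap in the strategy; they are bookkeeping items in an otherwise standard and sound argument.
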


Recall that a homomorphism from a graph $H$ to a graph $H'$ is a (not necessarily injective) map $\rho: V(H) \rightarrow V(H')$ that maps edges of $H$ to edges of $H'$. 
We also use the following lemma of Erd\H{o}s. 

\begin{lemma}[\cite{Er64}]\label{Erdoshyper}
For $\delta > 0$, $r \ge 2$, $t \ge 1$, and sufficiently large $n$, every $r$-uniform hypergraph $\Gamma$ on $n$ vertices with at least $\delta n^r$ edges contains a complete $r$-partite, $r$-uniform subhypergraph with parts of order $t$.
\end{lemma}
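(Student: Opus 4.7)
The plan is to prove this by induction on the uniformity $r$. The base case $r=2$ is the classical K\H{o}v\'ari--S\'os--Tur\'an statement: any graph on $n$ vertices with at least $\delta n^{2}$ edges contains $K_{t,t}$ for $n$ sufficiently large in terms of $\delta$ and $t$. I would derive this by the standard double counting: the number of $(1,t)$-stars is $\sum_{v} \binom{d(v)}{t} \ge n \binom{2\delta n}{t}$ by convexity and $\sum_{v} d(v) \ge 2\delta n^{2}$, and averaging over the $\binom{n}{t}$ possible leaf-$t$-sets yields one that lies in at least $t$ common neighborhoods.

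For the inductive step, assume the statement holds in uniformity $r-1$, and let $\Gamma$ be an $r$-uniform hypergraph on $n$ vertices with $|E(\Gamma)| \ge \delta n^{r}$. For each ordered tuple $\mathbf{v} = (v_1, \ldots, v_t) \in V^{t}$, define the \emph{link hypergraph} $\Gamma_{\mathbf v}$ on $V$ whose edges are those $(r-1)$-sets $S$ with $S \cup \{v_i\} \in E(\Gamma)$ for every $i$. Writing $d(S) = |\{v : S \cup \{v\} \in E(\Gamma)\}|$, for $\mathbf v$ uniform in $V^{t}$ the probability that $S \in E(\Gamma_{\mathbf v})$ equals $(d(S)/n)^{t}$. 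Since $\sum_{S} d(S) = r|E(\Gamma)| \ge r \delta n^{r}$, Jensen applied to $x \mapsto x^{t}$ yields
$$\EE[|E(\Gamma_{\mathbf v})|] \;=\; \sum_{S} (d(S)/n)^{t} \;\ge\; \binom{n}{r-1}\left(\frac{r |E(\Gamma)|}{n \binom{n}{r-1}}\right)^{t} \;=\; \Omega_{\delta,r,t}\!\left(n^{r-1}\right).$$

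Fixing $\mathbf v$ attaining at least this expectation, I would observe that tuples with a repeated coordinate account for only $O(n^{t-1})$ of the $n^{t}$ choices and each contributes at most $\binom{n}{r-1}$ to the sum, so their averaged contribution is $o(n^{r-1})$ and we may assume $v_{1}, \ldots, v_{t}$ are distinct. Restricting $\Gamma_{\mathbf v}$ to $V \setminus \{v_{1}, \ldots, v_{t}\}$ discards at most $t \binom{n-1}{r-2} = O(n^{r-2})$ edges, so for $n$ large the restricted $(r-1)$-uniform hypergraph still has at least $\delta' n^{r-1}$ edges, where $\delta' > 0$ depends only on $\delta, r, t$. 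Applying the inductive hypothesis produces disjoint $t$-sets $W_{1}, \ldots, W_{r-1}$ in $V \setminus \{v_{1}, \ldots, v_{t}\}$ forming a complete $(r-1)$-partite $(r-1)$-uniform subhypergraph of $\Gamma_{\mathbf v}$; by the definition of the link, setting $W_{r} := \{v_{1}, \ldots, v_{t}\}$ then yields the desired complete $r$-partite $r$-uniform subhypergraph of $\Gamma$.

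The main obstacle is purely bookkeeping: one must track the cascade of constants $\delta \to \delta' \to \delta'' \to \cdots$ and the threshold on $n$ through $r-1$ iterations of the induction to confirm that the final threshold still depends only on $\delta, r, t$. The substantive step is the Jensen estimate above; disjointness of $W_{r}$ from $W_{1}, \ldots, W_{r-1}$ is obtained essentially for free by restricting to $V \setminus \{v_{1}, \ldots, v_{t}\}$ before invoking the inductive hypothesis.
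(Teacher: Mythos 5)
Your proof is correct. The paper only states this lemma with a citation to Erd\H{o}s's 1964 paper and gives no proof, so there is no in-paper argument to compare against; your induction on the uniformity via the common link of a random ordered $t$-tuple, with the convexity (Jensen) estimate driving the inductive step, is essentially Erd\H{o}s's original argument. One small observation that simplifies the bookkeeping: the restriction of $\Gamma_{\mathbf v}$ to $V\setminus\{v_1,\ldots,v_t\}$ discards no edges at all, since any $(r-1)$-set $S\in E(\Gamma_{\mathbf v})$ is automatically disjoint from $\{v_1,\ldots,v_t\}$ (if $v_i\in S$ then $S\cup\{v_i\}=S$ has only $r-1$ vertices and cannot be a hyperedge of $\Gamma$), so your $O(n^{r-2})$ overestimate is harmless but unnecessary.
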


\begin{thm}\label{t:hom}
Suppose $\mathcal{H}$ and $\mathcal{H}'$ are fixed finite families of graphs such that for each $H' \in \mathcal{H}'$, there is some $H \in \mathcal{H}$ such that $H$ has a homomorphism to $H'$. If $k$ is a fixed positive integer, then 
$$h(n, k, \mathcal{H}) \le h(n, k, \mathcal{H}')+o(n^2).$$
\end{thm}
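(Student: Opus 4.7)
The strategy is to reduce to the case of $\mathcal{H}'$-free graphs via the graph removal lemma. Given any $\mathcal{H}$-free graph $G$ on $n$ vertices, I would first show that for each $H' \in \mathcal{H}'$, $G$ contains $o(n^{|V(H')|})$ copies of $H'$. Then Theorem~\ref{t:grl}, applied once for each of the finitely many $H' \in \mathcal{H}'$, lets us delete $o(n^2)$ edges from $G$ to obtain an $\mathcal{H}'$-free subgraph $G^{\ast}$, which by the definition of $h(n,k,\mathcal{H}')$ can then be made $k$-colorable by removing at most $h(n,k,\mathcal{H}')$ further edges. Summing gives $h(G,k) \le h(n,k,\mathcal{H}') + o(n^2)$, and since this holds for every $\mathcal{H}$-free $G$, the theorem follows.

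The heart of the argument is the supersaturation step. Fix $H' \in \mathcal{H}'$, set $h' = |V(H')|$, and let $H \in \mathcal{H}$ be a graph with a homomorphism $\phi\colon H \to H'$ guaranteed by the hypothesis. I claim that for every $\delta > 0$, any $\mathcal{H}$-free graph $G$ on $n \ge n_0(\delta,H,H')$ vertices contains fewer than $\delta n^{h'}$ copies of $H'$. Suppose not; form the $h'$-uniform hypergraph $\Gamma$ on $V(G)$ whose hyperedges are the vertex sets of copies of $H'$ in $G$. Then $\Gamma$ has at least $\delta n^{h'}/h'!$ hyperedges, so Lemma~\ref{Erdoshyper} (applied with $r=h'$ and $t=|V(H)|$) produces disjoint vertex sets $A_1,\dots,A_{h'} \subset V(G)$ of size $t$ such that every transversal spans a copy of $H'$ in $G$. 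Pigeonholing over the at most $h'!$ ways a transversal can be labeled by $V(H')$, we may further assume all of these copies use the same labeling, so that $G[A_1 \cup \cdots \cup A_{h'}]$ contains the $t$-blow-up $H'[t]$ with $A_i$ playing the role of the $i$-th vertex of $H'$. Sending each $u \in V(H)$ to a distinct vertex of $A_{\phi(u)}$ (possible since $t = |V(H)|$) then embeds $H$ into $G$, contradicting $\mathcal{H}$-freeness.

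To finish, given $\varepsilon > 0$, apply Theorem~\ref{t:grl} with error tolerance $\varepsilon/|\mathcal{H}'|$ to obtain for each $H' \in \mathcal{H}'$ a constant $\delta_{H'} > 0$ such that any graph containing at most $\delta_{H'}n^{h'}$ copies of $H'$ becomes $H'$-free after removing at most $(\varepsilon/|\mathcal{H}'|)n^2$ edges. Choosing $n$ large enough that the supersaturation step delivers fewer than $\delta_{H'} n^{|V(H')|}$ copies of every such $H'$, and applying the removal lemma once per element of $\mathcal{H}'$, we delete at most $\varepsilon n^2$ edges in total and arrive at an $\mathcal{H}'$-free graph. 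Adding $h(n,k,\mathcal{H}')$ more deletions makes it $k$-colorable, and letting $\varepsilon \to 0$ gives the theorem. I expect the main obstacle to be the bookkeeping in the supersaturation step—extracting a properly \emph{labeled} blow-up of $H'$ from the unordered sub-hypergraph handed over by Lemma~\ref{Erdoshyper}—but once the labeled blow-up is in hand, the homomorphism $\phi$ immediately exhibits a copy of $H$.
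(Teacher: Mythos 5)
Your high-level strategy is the same as the paper's: show supersaturation (an $\mathcal{H}$-free graph has few copies of each $H'$) via Erd\H{o}s's hypergraph lemma, then apply the graph removal lemma for each $H'\in\mathcal{H}'$ to delete $o(n^2)$ edges and reduce to the $\mathcal{H}'$-free case. The difference is in how you set up the auxiliary hypergraph, and this is where you have a genuine gap.

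You form the \emph{unordered} $h'$-uniform hypergraph $\Gamma$ whose hyperedges are vertex-sets of copies of $H'$, apply Lemma~\ref{Erdoshyper} to get parts $A_1,\dots,A_{h'}$ of size $t$ with every transversal spanning some copy of $H'$, and then assert that after pigeonholing over the $h'!$ possible labelings, ``we may further assume all of these copies use the same labeling.'' That is not what pigeonhole gives you: it only guarantees that a fraction $\geq 1/h'!$ of the $t^{h'}$ transversals share a labeling $\sigma$, not all of them. Without all transversals using $\sigma$, you do not get that the bipartite graph between $A_i$ and $A_j$ is complete for each edge $\{\sigma(i),\sigma(j)\}\in E(H')$, so you cannot conclude that $G[A_1\cup\cdots\cup A_{h'}]$ contains the labeled blow-up $H'[t]$, and hence the embedding of $H$ via $\phi$ does not follow. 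Fixing this requires an extra step: e.g.\ start with a larger part size, pass to the majority labeling $\sigma$ to obtain a \emph{dense $h'$-partite} sub-hypergraph of $A_1\times\cdots\times A_{h'}$, and apply Erd\H{o}s's lemma (or K\H{o}v\'ari--S\'os--Tur\'an iteratively) a second time to that partite hypergraph to extract a genuine complete sub-blow-up in which every transversal is $\sigma$-labeled.

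The paper sidesteps the labeling issue entirely by building the auxiliary hypergraph as an $r$-partite one from the outset: parts $V_1,\dots,V_r$ are indexed by the vertices of $H'$, and a tuple $(v_1,\dots,v_r)$ is a hyperedge only when $v_i$ plays the role of vertex $i$ in a copy of $H'$. Applying Lemma~\ref{Erdoshyper} to this partite hypergraph directly yields parts that each sit inside a single role class, so the resulting complete $r$-partite sub-hypergraph is a labeled blow-up of $H'$ with no further work. I'd recommend adopting this ordered formulation; otherwise your argument as written does not close.
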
 
\begin{proof}
Let $H'$ be a graph in $\mathcal{H}'$ and $H$ be some graph in $\mathcal{H}$ for which $H'$ has a homomorphism to $H$. Let $r$ denote the number of vertices of $H'$, and $t$ denote the number of vertices of $H$. Label the vertices of $H'$ as $\{1,\ldots,r\}$. Fix any $\varepsilon>0$ and let $\delta>0$ be as in the graph removal lemma for $H'$. Let $G$ be an $\mathcal{H}$-free graph on $n$ vertices.  Consider the $r$-uniform $r$-partite hypergraph $X$ with parts $V_1,\ldots,V_r$ with each $V_i$ a copy of $V(G)$, and $(v_1,v_2,\ldots,v_r) \in V_1 \times V_2 \times \cdots \times V_r$ is an edge of $X$ if there is a copy of $H'$ with $v_i$ a copy of $i$ for $i \in \{1,\ldots,r\}$. 

If there are at least $\delta n^r$ copies of $H'$ in $G$, then $X$ contains at least $\delta n^r=\delta t^{-r}|V(X)|^r$ edges. As we may assume $n$ is sufficiently large, Lemma \ref{Erdoshyper} implies that $X$ contains a copy of the complete $r$-partite $r$-uniform hypergraph with parts of order $t$. As $H$ has a homomorphism to $H'$, we can then find a copy of $H$ with vertices among the vertices of the copy of the complete $r$-partite $r$-uniform hypergraph with parts of order $t$, contradicting that $G$ is $\mcH$-free. 

So we may suppose $G$ has less than $\delta n^r$ copies of $H'$. By the graph removal lemma applied to $H'$, we can remove $\varepsilon n^2$ edges from $G$ to make it $H'$-free. We can do this edge removal for each $H' \in \mathcal{H}'$ to make the graph $\mathcal{H}'$-free. We can then remove an additional $h(n,k,\mcH')$ edges to make it $k$-partite. We have thus obtained the desired upper bound on $h(n,k,\mcH)$. 
\end{proof}

We have the following immediate corollary by taking $\mathcal{H}$ and $\mathcal{H}'$ to each consist of a single graph. 

\begin{cor}\label{cor:homsingle}
If $H$ and $H'$ are fixed graphs for which $H$ has a homomorphism to $H'$ and $k$ is a fixed positive integer, then
$h(n, k, H) \le h(n, k, H')+o(n^2)$.
\end{cor}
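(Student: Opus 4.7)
The plan is to derive Corollary \ref{cor:homsingle} as the singleton specialization of Theorem \ref{t:hom}. I take $\mathcal{H} = \{H\}$ and $\mathcal{H}' = \{H'\}$. The hypothesis of Theorem \ref{t:hom}, namely that for every graph in $\mathcal{H}'$ some graph in $\mathcal{H}$ admits a homomorphism into it, collapses in this singleton situation to the single condition that $H$ has a homomorphism to $H'$, which is precisely the hypothesis of the corollary. No further compatibility checks are needed since both families have only one element.

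Under this specialization, the quantity $h(n, k, \mathcal{H})$ appearing in Theorem \ref{t:hom} is just $h(n, k, H)$, and $h(n, k, \mathcal{H}')$ is $h(n, k, H')$. Hence the conclusion $h(n, k, \mathcal{H}) \le h(n, k, \mathcal{H}') + o(n^2)$ reads $h(n, k, H) \le h(n, k, H') + o(n^2)$, which is exactly the statement of the corollary.

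There is no real obstacle: all the substantive work has already been done in Theorem \ref{t:hom}. Recall that the key idea there is that, given any $H$-free graph $G$ on $n$ vertices, one forms the $r$-partite $r$-uniform hypergraph $X$ whose edges record ordered copies of $H'$ (with $r = |V(H')|$) and argues in two cases: either $G$ has fewer than $\delta n^r$ copies of $H'$, in which case the graph removal lemma removes them all by deleting only $\varepsilon n^2$ edges, or $X$ is dense enough that Lemma \ref{Erdoshyper} produces a complete $r$-partite $r$-uniform subhypergraph with parts of order $|V(H)|$, into whose associated blow-up one can embed $H$ via its given homomorphism to $H'$, contradicting $H$-freeness. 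With Theorem \ref{t:hom} in hand, the corollary amounts only to unwrapping the family notation, so the proof is essentially one line.
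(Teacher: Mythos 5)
Your proof is correct and matches the paper's: the paper derives Corollary~\ref{cor:homsingle} exactly by specializing Theorem~\ref{t:hom} to the singleton families $\mathcal{H}=\{H\}$ and $\mathcal{H}'=\{H'\}$. Your recap of the underlying argument in Theorem~\ref{t:hom} is also accurate, though not needed for the deduction itself.
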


In the special case that $H$ is not bipartite, $H$ and $k$ are fixed, and $H'$ is a subgraph of $H$, we get that these parameters are asymptotically equal, as in Theorem~\ref{c:homomorphism}.

\begin{proof}[Proof of Theorem~\ref{c:homomorphism}]
The upper bound follows from Corollary~\ref{cor:homsingle}. As $H'$ is a subgraph of $H$, then any $H'$-free graph is also $H$-free, and hence $h(n, k, H) \le h(n, k, H')$.
\end{proof} 

\begin{rem}
Note that Theorem~\ref{c:homomorphism} immediately implies a weaker version of Theorem~\ref{t:oddcycle}, namely that for $\mcH' = \{C_3, C_5, \ldots C_{2r-1}, C_{2r+1}\}$,
$$h(n, k, C_{2r+1}) = h(n, k, \mcH') + o_r(n^2).$$
\end{rem}

As another example, let $K_{2,2,2}$ be the complete tripartite graph on six vertices with two vertices in each part. From Theorems~\ref{t:hom}~and~\ref{c:homomorphism}, we have $$h(n, k, K_3) \le h(n, k, K_{2,2,2}) \le h(n, k, K_3)+o(n^2).$$ Consequently, although we don't know for any fixed $k \geq 2$ the asymptotic value of $h(n,k,K_3)$ or $h(n,k,K_{2,2,2})$, we know that they are asymptotically the same.

\subsection{Forbidding a wheel}
Recall that the \textit{wheel} $W_{l}$ is the graph of $l + 1$ vertices consisting of an $l$-cycle and an additional vertex adjacent to all of the vertices of the $l$-cycle.
The above results on graph homomorphisms allow us to bound how many edges we need to delete to make a $W_{l}$-free graph on $n$ vertices $k$-colorable. We first get an asymptotic answer for even wheels $W_l$ (when $l$ is even). Since the wheel $W_l$ has a triangle, and with $l$ even is $3$-colorable, we have the following corollary of Corollary \ref{c:homomorphism} and Theorems~\ref{t:clique} and \ref{c:lb}. 

\begin{prop}
Fix integers $r \ge 2, k \ge 1$. Then, $h(n,k,K_3) \leq h(n,k,W_{2r})\leq h(n,k,K_3)+o(n^2)$.  In particular, for $n$ sufficiently large, we have $h(n,k,W_{2r})=\Theta(n^2/k^2)$.
\end{prop}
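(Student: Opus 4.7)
The plan is to deduce the proposition in two pieces: first the sandwich inequality $h(n,k,K_3) \le h(n,k,W_{2r}) \le h(n,k,K_3)+o(n^2)$, and then the $\Theta(n^2/k^2)$ conclusion by combining this with Theorems \ref{t:clique} and \ref{c:lb}.

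For the lower bound $h(n,k,K_3) \le h(n,k,W_{2r})$, I would simply note that $K_3$ is a subgraph of $W_{2r}$ (the hub together with any edge of the outer cycle forms a triangle), so every $K_3$-free graph is automatically $W_{2r}$-free, meaning the maximum defining $h(n,k,W_{2r})$ is taken over a superclass of the graphs defining $h(n,k,K_3)$. For the upper bound, I would invoke Corollary~\ref{cor:homsingle}, whose only hypothesis is the existence of a homomorphism $W_{2r}\to K_3$. Since $l=2r$ is even, the outer cycle $C_{2r}$ is bipartite, so we can properly $2$-color it; assigning the hub a third color gives a proper $3$-coloring of $W_{2r}$, which is precisely a homomorphism to $K_3$. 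Applying Corollary~\ref{cor:homsingle} yields $h(n,k,W_{2r}) \le h(n,k,K_3)+o(n^2)$, completing the sandwich.

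For the $\Theta$ conclusion, I would handle the two directions separately. The upper bound follows immediately from the sandwich together with Theorem~\ref{t:clique} in the case $r=3$, which gives $h(n,k,K_3)\le c_3 n^2/k^2$; hence $h(n,k,W_{2r}) \le c_3 n^2/k^2 + o(n^2)$, and since $k$ is fixed, the $o(n^2)$ term is also $o(n^2/k^2)$. For the lower bound, I would apply Theorem~\ref{c:lb} with its parameter equal to $1$: this produces, for $n$ sufficiently large, a graph $G$ on $n$ vertices with odd girth larger than $3$ (i.e.\ triangle-free) and $h(G,k)\ge \alpha_1 n^2/k^2$. Since any triangle-free graph is $W_{2r}$-free (as $W_{2r}\supset K_3$), this $G$ witnesses $h(n,k,W_{2r}) \ge \alpha_1 n^2/k^2$.

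There is essentially no obstacle here: every ingredient is a black-box invocation of an earlier result in the paper, and the only genuine content is the observation that even wheels are $3$-chromatic, which supplies the required homomorphism to $K_3$. The mild subtlety to watch out for is the interaction between the $o(n^2)$ error (implicit constants depending on $k$ and $r$) and the $n^2/k^2$ main term; but for $k$ and $r$ fixed and $n\to\infty$, the $o(n^2)$ term is genuinely negligible compared to $n^2/k^2$, so the $\Theta$ statement follows without further care.
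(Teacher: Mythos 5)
Your argument is correct and matches the paper's reasoning: the paper likewise observes that $K_3$ is a subgraph of $W_{2r}$, that $W_{2r}$ is $3$-colorable (hence admits a homomorphism to $K_3$), and then invokes Theorem~\ref{c:homomorphism} together with Theorems~\ref{t:clique} and~\ref{c:lb} for the $\Theta(n^2/k^2)$ conclusion. Your version merely unpacks Theorem~\ref{c:homomorphism} into its two constituent pieces (Corollary~\ref{cor:homsingle} for the upper bound and the subgraph-containment monotonicity for the lower bound), which is the same content.
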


We suspect $h(n, k, W_l)$ depends significantly on the parity of $l$, and this is related to odd wheels not being $3$-colorable. The following proposition makes an initial observation for odd wheels.   

\begin{prop}
There are positive constants $c_1,c_2>0$ such that the following holds. For fixed positive integers $r,k$ and every sufficiently large positive integer $n$, we have
$$c_1 \frac{n^2}{k^{2}} \le h(n, k, W_{2r+1}) \le c_2 \frac{n^2}{k^{3/2}}.$$
\end{prop}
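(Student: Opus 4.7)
The plan is to prove the two inequalities independently, each as a direct consequence of results already established earlier in the paper. For the lower bound, the key observation is that for every $r\ge 1$ the wheel $W_{2r+1}$ contains a triangle (the hub together with any edge of its rim cycle), so every triangle-free graph is automatically $W_{2r+1}$-free. The lower bound thus reduces to producing a triangle-free graph on $n$ vertices which is quantitatively far from being $k$-colorable, and Proposition~\ref{p:cyclefree} supplies exactly that: for all sufficiently large $n$ (as a function of $k$) it provides a triangle-free graph $\Gamma$ on $n$ vertices with $h(\Gamma,k)=\Omega(n^2/k^2)$. Taking $c_1$ to be the implicit constant yields $h(n,k,W_{2r+1}) \ge h(\Gamma,k) \ge c_1 n^2/k^2$.

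For the upper bound, I plan to apply the recursive partitioning Lemma~\ref{l:divide} to $H=W_{2r+1}$, taking $v$ to be the hub so that $H_v=C_{2r+1}$ has no isolated vertices and the hypothesis of the lemma is met. The lemma then gives
$$h(n, 2st, W_{2r+1}) \;\le\; 2t\cdot h\!\left(\tfrac{n}{t}, s, C_{2r+1}\right) + \frac{n^2}{2est^2}.$$
Substituting Theorem~\ref{t:oddcycle}'s bound $h(m, s, C_{2r+1}) = O_r(m^2/s^{r+1})$ into the first term yields
$$h(n,2st,W_{2r+1}) \;\le\; O_r\!\left(\frac{n^2}{t\,s^{r+1}}\right) + \frac{n^2}{2est^2}.$$
I will take the diagonal choice $s=t=\lfloor\sqrt{k/2}\,\rfloor$, so that $2st \le k$ and monotonicity of $h(n,\cdot,W_{2r+1})$ in the second argument gives $h(n,k,W_{2r+1}) \le h(n,2st,W_{2r+1})$. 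Under this choice the two terms become $O_r(n^2/s^{r+2})$ and $n^2/(2es^3)$ respectively; because $(r+2)/2 \ge 3/2$ for all $r\ge 1$ and $s=\Theta(\sqrt{k})$, both are $O_r(n^2/k^{3/2})$, which delivers the stated upper bound.

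No step presents a genuine obstacle; the proposition is best viewed as a warm-up motivating Theorem~\ref{t:wheel}, which uses the same scheme but with the non-diagonal balancing $s\sim k^{1/(r+1)}$, $t\sim k^{r/(r+1)}$ to sharpen the exponent to $2-1/(r+1)$. The only care points are the integrality of $s$ and $t$ (handled by the floor) and the regime of small $k$, where the trivial bound $h(n,k,W_{2r+1}) \le n^2/(2k)$ already suffices after absorbing the finite range into the constant $c_2$.
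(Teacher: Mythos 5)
Your lower bound argument is essentially the paper's: both exploit that $K_3\subset W_{2r+1}$, so a triangle-free graph far from $k$-colorable works; the paper cites Theorem~\ref{c:lb} with $r=1$ rather than Proposition~\ref{p:cyclefree}, but these yield the same $\Omega(n^2/k^2)$.

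Your upper bound, however, takes a genuinely different route. The paper's proof observes that $W_{2r+1}$ is $4$-chromatic and hence admits a homomorphism to $K_4$, then invokes Theorem~\ref{t:hom} (which rests on the graph removal lemma) to get $h(n,k,W_{2r+1}) \le h(n,k,K_4)+o(n^2)$, and finishes with the clique bound from Theorem~\ref{t:clique}. You instead apply Lemma~\ref{l:divide} directly with $v$ the hub, so $H_v=C_{2r+1}$, feed in Theorem~\ref{t:oddcycle}, and make the diagonal choice $s=t=\Theta(\sqrt{k})$. This is precisely the strategy the paper reserves for Theorem~\ref{t:wheel}, only with a suboptimal balance of $s$ and $t$; specializing that theorem to $r=1$ recovers the $k^{3/2}$ exponent, and your diagonal choice gives $k^{(r+2)/2}\ge k^{3/2}$ for all $r\ge 1$, so the computation is sound. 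The trade-offs: the paper's homomorphism route is conceptually cleaner and does not need to carry the $O_r(n^{3/2})$ error term from Lemma~\ref{l:removecycle} through a recursion, but it relies on the removal lemma and hence the constants are ineffective in $n$; your route is elementary and explicit, at the cost of essentially re-deriving (a weaker form of) Theorem~\ref{t:wheel}, which the paper presents separately as the sharper result. One small point you should note explicitly: the ``in particular'' bound $h(n,k,C_{2r+1})=O_r(n^2/k^{r+1})$ from Theorem~\ref{t:oddcycle} only absorbs the $O_r(n^{3/2})$ term when $k$ is small relative to $n$, but since the proposition fixes $k$ and takes $n$ large this is fine, and you do flag the small-$k$ regime. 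Overall your proof is correct and valid.
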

\begin{proof}

As $K_3$ is a subgraph of $W_{2r+1}$, which in turn is a subgraph of $K_{2r+2}$, we have $h(n, k, W_{2r+1}) \geq h(n, k, K_3)  \geq c_1 \frac{n^2}{k^{2}}$ for a positive constant $c_1$, 
where the last bound is from Theorem \ref{c:lb} when $r=1$. For the upper bound, we observe that $W_{2r+1}$ is $4$-colorable as $C_{2r+1}$ is $3$-colorable, and hence $W_{2r+1}$ has a homomorphism to $K_4$. It follows from Theorems~\ref{t:hom} and \ref{t:clique} and $n$ is sufficiently larger that $h(n,k,W_{2r+1}) \leq h(n,k,K_4)+o(n^2) \leq c_2\frac{n^2}{k^{3/2}}$.  
\end{proof} 

Note that the lower and upper bounds are rather far apart, and the above result does not give an indication of which of the two bounds $h(n, k, W_{2r+1})$ is closer to. We obtain Theorem~\ref{t:wheel} by a careful analysis that combines the methods used to give upper bounds on $h(n, k, K_r)$ and $h(n, k, C_{2r+1})$. This gives a much better upper bound on $h(n, k, W_{2r+1})$ which we conjecture to be tight up to a constant factor depending only on $r$. 

\begin{proof}[Proof of Theorem \ref{t:wheel}]
Let $\ell$ be the largest integer at most $k$ which is twice a perfect $(r+1)$-power. We have $k \leq 2^{r+2}\ell$. 
Let $s=(\ell/2)^{1/(r+1)}$ and $t=\ell/(2s)$, so $s$ and $t$ are integers and $\ell=2st$. Let $H=W_{2r+1}$ and $v$ be the vertex of the wheel $W_{2r+1}$ of degree $2r + 1$, so $H_v = C_{2r+1}$. By Lemma~\ref{l:divide} and Theorem \ref{t:oddcycle}, we have 
 \begin{align*}
h(n, k, W_{2r+1}) &\le \frac{2n^2}{est^2} +  t \cdot h \left( \frac{2n}{t}, s, C_{2r+1} \right) \\
&\le \frac{2n^2}{est^2} +  t \cdot \beta_r \cdot \frac{(2n/t)^2}{s^{r + 1}} \\
&= \left(\frac{2}{e}+4\beta_r\right)
\frac{n^2}{(\ell/2)^{2-1/(r+1)}} \\
&\le 2^{2r+6}\left(\frac{2}{e}+4\beta_r\right)
\frac{n^2}{k^{2-1/(r+1)}},\\
\end{align*}
where $\beta_r$ only depends on $r$. Letting  $c_r=2^{2r+6}\left(\frac{2}{e}+4\beta_r\right)$ completes the proof.
\end{proof}

\section{From Max-$k$-Cuts to Max-$l$-Cuts}\label{s:lcuts}
Note that $h(G, k)$ and $\textsf{Max-$k$-Cut}(G)$ are related through the identity $$h(G, k) = e(G) - \textsf{Max-$k$-Cut}(G).$$ Consequently, the results proved in the previous section on $h(n, k, C_{2r+1})$ yield bounds on $\MC(G)$ for $C_{2r+1}$-free graphs $G$, and on $\textsf{Max-$l$-Cut}(G)$ for $l > 2$. 
We can also relate $\textsf{Max-$l$-Cut}(G)$ to $h(G, k)$ for $l < k$. 

\begin{defn}
For $G = (V, E)$, let $$d_l(G) = \frac{\textsf{Max-$l$-Cut}(G)}{|E|},$$
be the fraction of edges of $G$ that can cross an optimal $l$-cut of $G$.
\end{defn}

In particular, for a graph $G$ on $m$ edges, we have that $d_2(G) m = \MC(G)$. We can bound $d_l(G)$ in terms of $d_k(G)$ for $k \ge l$.

\begin{prop}\label{p:dk}
For $G = (V, E)$ and positive integers $l \le k$,
$d_l(G) \ge d_l(K_k) d_k(G). $
\end{prop}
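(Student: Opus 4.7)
The plan is to compose an optimal $k$-partition of $G$ with a (randomly permuted) optimal $l$-partition of $K_k$, and then use linearity of expectation to extract an $l$-partition of $V(G)$ of the desired size.

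More concretely, let $\pi : V \to [k]$ be an optimal $k$-partition of $G$, so that the number of edges $\{u,v\} \in E$ with $\pi(u) \neq \pi(v)$ equals $\textsf{Max-}k\textsf{-Cut}(G) = d_k(G)\,|E|$. Let $\sigma^* : [k] \to [l]$ be an optimal $l$-partition of the complete graph $K_k$, so that the number of unordered pairs $\{i,j\} \subset [k]$ with $\sigma^*(i) \neq \sigma^*(j)$ equals $\textsf{Max-}l\textsf{-Cut}(K_k) = d_l(K_k)\binom{k}{2}$. Pick a uniformly random permutation $\tau$ of $[k]$ and define the (random) $l$-partition
\[
\Phi := \sigma^* \circ \tau \circ \pi : V \to [l].
\]

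The key calculation is for any fixed pair of distinct elements $a,b \in [k]$, the image $(\tau(a), \tau(b))$ is uniformly distributed over ordered pairs of distinct elements of $[k]$. Therefore
\[
\Pr[\sigma^*(\tau(a)) \neq \sigma^*(\tau(b))] \;=\; \frac{\#\{\{i,j\}\subset [k] : \sigma^*(i)\neq \sigma^*(j)\}}{\binom{k}{2}} \;=\; d_l(K_k).
\]
Hence for each edge $\{u,v\}\in E$ that is cut by $\pi$ (i.e.\ with $\pi(u)\neq \pi(v)$), the probability that it is cut by $\Phi$ is exactly $d_l(K_k)$. By linearity of expectation, the expected number of edges cut by $\Phi$ is at least $d_l(K_k)\cdot d_k(G)\,|E|$, so some realization of $\Phi$ achieves at least this many cut edges. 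Dividing by $|E|$ gives $d_l(G)\ge d_l(K_k)\,d_k(G)$, as desired.

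There is no real obstacle here; the only point that needs care is verifying that the randomization over $\tau\in S_k$ turns the event ``$\Phi$ cuts the edge $\{u,v\}$'' (conditional on $\pi$ cutting it) into exactly the edge density of the max $l$-cut of $K_k$, which is what yields the clean product bound.
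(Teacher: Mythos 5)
Your proof is correct and essentially the same argument as the paper's: you start from an optimal $k$-partition and coarsen it to an $l$-partition via a random choice, then use linearity of expectation with the observation that a cut edge survives with probability exactly $d_l(K_k)$. The only cosmetic difference is that you randomize by post-composing an optimal $l$-partition of $K_k$ with a random permutation of $[k]$, while the paper picks a uniformly random equitable partition of $[k]$ into $l$ parts directly; these induce the same distribution on ``same part or not'' for each pair.
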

\begin{proof}
For $G = (V, E)$ with $|V| = n, |E| = m$, fix a $k$-partition $V = V_1 \sqcup \cdots \sqcup V_k$ with $\textsf{Max-$k$-Cut}(G)$ edges between parts.
Choose a random, equitable partition of the set $\{1, \ldots , k\}$ into $l$ parts $S_1, \ldots ,S_l$ (so each part has size either $\lfloor k/l \rfloor$ or $\lceil k/l \rceil$). Let $W_i = \bigcup_{j \in S_i} V_j$ for $i = 1, \ldots, l$. Then $V = W_1 \sqcup \cdots \sqcup W_l$ is an $l$-partition of $V$.

We count the expected fraction of edges internal to the $l$-cut $V = W_1 \sqcup \cdots \sqcup W_l$. Any edge $e \in E$ internal to some $V_j$ will remain internal in the $l$-cut, and a fraction $1 - d_k(G)$ of the edges are of this form. All other edges $e \in E$ have endpoints in $V_{j_1}, V_{j_2}$ for $j_1 \neq j_2$. The probability that $e$ is internal in the $l$-cut is the probability that $V_{j_1}, V_{j_2} \in W_i$ for some single part of the $l$-partition, which is
$1 - d_l(K_k)$.
Thus, the expected fraction of edges internal to the $l$-cut $W_1, \ldots, W_l$ is 
\begin{equation*}\label{e:internaledge}
(1 - d_l(K_k))d_k(G) + (1 - d_k(G)) = 1 - d_l(K_k) d_k(G).
\end{equation*}
This gives the desired bound.
\end{proof}

By considering a uniformly random partition of a graph $G$, we have the bound 
$$\textsf{Max-$l$-Cut}(G) \geq \frac{l-1}{l} \cdot e(G).$$
\begin{defn} The \textit{surplus} of the \textsf{Max-$l$-Cut} of a graph $G$ is given by $$\pi(l,G):= \MLC(G)-\left(1-\frac{1}{l}\right)e(G).$$
The {\it surplus} of a graph $G$ is the \textit{surplus} of the \textsf{Max-$2$-Cut} of $G$.
\end{defn}
The surplus measures how much larger the \textsf{Max-$l$-Cut} is above the random bound. We are often interested in how large can we show the surplus is for graphs with certain properties.
We recall a standard method for giving a lower bound on the $\MLC(G)$ in terms of the $\MLC$ of smaller induced subgraphs of $G$.

\begin{lemma}\label{l:surplussubgraph}
Given $G = (V, E)$ and a vertex $k$-partition $V = V_1 \sqcup \cdots \sqcup V_k$, then 
$$\pi(l, G) \ge \sum_{i = 1}^k \pi(l, G[V_i])$$
\end{lemma}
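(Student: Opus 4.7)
The plan is to build a good $l$-cut of $G$ from optimal $l$-cuts of the induced subgraphs $G[V_i]$ by randomly ``shuffling'' the part labels on each piece. For each $i\in\{1,\ldots,k\}$, fix an $l$-partition $f_i \colon V_i \to \{1,\ldots,l\}$ of $V_i$ that achieves $\MLC(G[V_i])$. Independently for each $i$, choose a uniformly random permutation $\sigma_i$ of $\{1,\ldots,l\}$, and define $f \colon V \to \{1,\ldots,l\}$ by $f(v) = \sigma_i(f_i(v))$ whenever $v \in V_i$. This produces a random $l$-partition of $V$.

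Next I would compute the expected number of edges cut by $f$. For an edge $uv$ with both endpoints in the same $V_i$, the condition $f(u)\neq f(v)$ is equivalent to $f_i(u)\neq f_i(v)$ since $\sigma_i$ is a bijection, so such an edge is cut (deterministically) exactly when it is cut by $f_i$. Thus the contribution from edges internal to the pieces is exactly $\sum_{i=1}^k \MLC(G[V_i])$. For an edge $uv$ with $u \in V_i$ and $v \in V_{i'}$ with $i \neq i'$, the random variables $\sigma_i(f_i(u))$ and $\sigma_{i'}(f_{i'}(v))$ are independent and each uniform on $\{1,\ldots,l\}$ (since any fixed element gets mapped uniformly by a uniform permutation), so they disagree with probability $(l-1)/l$. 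Writing $E_{\mathrm{cross}}$ for the number of edges with endpoints in different $V_i$'s, the expected number of cut edges is
\[
\sum_{i=1}^k \MLC(G[V_i]) \;+\; \frac{l-1}{l}\,E_{\mathrm{cross}}.
\]

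Now I would substitute $\MLC(G[V_i]) = \pi(l,G[V_i]) + (1-1/l)\,e(G[V_i])$ and use the identity $e(G) = \sum_i e(G[V_i]) + E_{\mathrm{cross}}$ to rewrite the expected cut size as
\[
\sum_{i=1}^k \pi(l,G[V_i]) \;+\; \frac{l-1}{l}\,e(G).
\]
Since $\MLC(G)$ is at least the expected cut size of any random $l$-partition, we obtain $\MLC(G) - (1-1/l)\,e(G) \geq \sum_{i=1}^k \pi(l,G[V_i])$, which is exactly the claim.

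There is no real obstacle here: the only point requiring a moment's care is verifying that the shuffles make edges between distinct $V_i$'s be cut with probability $(l-1)/l$, which follows because $\sigma_i(a)$ is uniform on $\{1,\ldots,l\}$ for any fixed $a$, and the $\sigma_i$'s are independent across $i$. The proof can also be made deterministic by greedily fixing the $\sigma_i$'s one at a time, but the probabilistic formulation is cleaner.
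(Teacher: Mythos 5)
Your proof is correct and matches the paper's argument essentially verbatim: both fix an optimal $l$-partition of each $G[V_i]$, apply an independent uniformly random permutation of the $l$ labels on each piece, observe that internal edges are cut deterministically while cross edges are cut with probability $(l-1)/l$, and conclude by comparing the expected cut size to $\MLC(G)$. No differences worth noting.
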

\begin{proof}
For each $i$, fix an $l$-partition $V_i = W_{i1} \sqcup \cdots \sqcup W_{il}$ such that the number of edges between different $W_{ij}$ is $\MLC(G[V_i])$. From this, we construct an $l$-cut of $V = U_1 \sqcup \cdots \sqcup U_l$. For each $i$, fix a random permutation $\sigma \in S_l$ and assign $W_{ij}$ to $U_{\sigma(j)}$. Note that the $\MLC(G)$ is at least the expected size of this random $l$-cut. In this process, all edges not contained in $G[V_i]$ for some $i$ have endpoints randomly assigned and thus cross the resulting cut with probability $1- 1/l$. Therefore, 
$$\MLC(G) \ge \left(1 - \frac1l \right) \left( e(G) - \sum_{i = 1}^k e(G[V_i]) \right) + \sum_{i = 1}^k \MLC(G[V_i]).$$
Since $\MLC(G[V_i]) = (1 - 1/l) e(G[V_i]) + \pi(l, G[V_i])$, the above inequality implies the desired result.
\end{proof}

\section{Max-Cut in graphs with a forbidden odd cycle}\label{s:maxcut}

We leverage the previous results to prove the following lower bound on Max-Cut for $C_{2r+1}$-free graphs. This bound will be helpful to apply to graphs which are reasonably dense. 

\begin{lemma} \label{l:maxl} For any positive integer $r$, there exists $c = c(r) > 0$ such that the following holds. If $G$ is a $C_{2r+1}$-free graph with $n$ vertices and $m$ edges, then
$$\textsf{Max-Cut}(G) \ge \frac{m}{2}+\Omega_r((m/n^2)^{1/r}m).$$
\end{lemma}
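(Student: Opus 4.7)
My approach is to combine the Max-$k$-Cut bound that Theorem~\ref{t:oddcycle} gives for a $C_{2r+1}$-free graph with the Max-$k$-Cut to Max-Cut passage provided by Proposition~\ref{p:dk}, optimizing over $k$. From the identity $m\,d_k(G) = m - h(G,k)$ and Theorem~\ref{t:oddcycle}, I have for every integer $k \ge 1$ that
$$d_k(G) \ge 1 - \frac{c_r n^2}{m\,k^{r+1}}.$$
A direct count gives $d_2(K_k) = \lfloor k/2\rfloor \lceil k/2 \rceil/\binom{k}{2} \ge \frac12 + \frac{1}{2k}$ for every $k \ge 2$. Proposition~\ref{p:dk} with $l = 2$, together with $(1/2 + a)(1 - b) \ge 1/2 + a - b$ for $a,b \ge 0$, then yields
$$d_2(G) \ge d_2(K_k)\,d_k(G) \ge \frac12 + \frac{1}{2k} - \frac{c_r n^2}{m\,k^{r+1}}.$$

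The main step is to choose $k$ so that the gain $1/(2k)$ comfortably dominates the loss $c_r n^2/(m k^{r+1})$. Setting $k = \lceil(4 c_r n^2/m)^{1/r}\rceil$ forces $c_r n^2/(m k^r) \le 1/4$, so the loss is at most $1/(4k)$, giving $d_2(G) \ge \frac12 + \frac{1}{4k}$. Multiplying by $m$ and observing that $k = O_r((n^2/m)^{1/r})$ by construction, I obtain
$$\MC(G) \ge \frac{m}{2} + \frac{m}{4k} = \frac{m}{2} + \Omega_r\!\left(\left(\frac{m}{n^2}\right)^{1/r} m\right),$$
which is exactly what is claimed.

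The remaining work is routine edge-case bookkeeping to justify the choice of $k$. If $(4c_r n^2/m)^{1/r} < 2$ (the dense regime $m = \Omega_r(n^2)$), I instead apply Theorem~\ref{t:oddcycle} directly with $k = 2$, obtaining $\MC(G) \ge m - c_r n^2/4 = m/2 + \Omega_r(m)$, which matches the target because $(m/n^2)^{1/r} = \Omega_r(1)$ in this range. If the prescribed $k$ would exceed $n$ (the very sparse regime), then $m$ is small enough that the classical Edwards bound $\MC(G) \ge m/2 + \Omega(\sqrt m)$ already majorises the target surplus. The main obstacle I anticipate is not conceptual but rather the careful calibration of the two competing terms $\frac{1}{2k}$ and $\frac{c_r n^2}{m k^{r+1}}$ so that a positive improvement of order $1/k$ survives after multiplication; this is precisely what pins down the optimal choice $k \asymp (n^2/m)^{1/r}$ and in turn the exponent $1/r$ appearing in the final surplus.
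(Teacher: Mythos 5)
Your proof is correct and takes essentially the same approach as the paper: both combine the $h(G,k)$ bound from Theorem~\ref{t:oddcycle} with the $\MC$-to-$\textsf{Max-$k$-Cut}$ transfer of Proposition~\ref{p:dk} and optimize by choosing $k \asymp (n^2/m)^{1/r}$. The only cosmetic differences are that the paper fixes $k$ even so it can use the exact value $d_2(K_k) = \tfrac{1}{2}\bigl(1+\tfrac{1}{k-1}\bigr)$ while you use the universal bound $d_2(K_k) \ge \tfrac12 + \tfrac{1}{2k}$, and your edge-case bookkeeping is more explicit (though with $k=2$ Theorem~\ref{t:oddcycle} gives $c_r n^2/2^{r+1}$, not $c_r n^2/4$ --- a slip that does not affect the conclusion).
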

\begin{proof} Let $k$ be the smallest even integer that is at least $(2c_rn^2/m)^{1/r}$, where $c_r$ is the implicit constant in Theorem \ref{t:oddcycle}. In particular, $c_rn^2/k^{r+1} \leq m/(2k)$. Let $m_{0}$ be the number of edges we delete from $G$ to get a $k$-partite graph, so $d_{k}(G)m=m-m_0$.
By Theorem \ref{t:oddcycle}, we have $m_0\leq c_r n^2/k^{r+1}$. 
Note as $k$ is even, $$d_2(K_k) = \frac{\left(k/2\right)^{2}}{{k\choose 2}}=\frac{k}{2(k-1)}=\frac{1}{2}\left(1+\frac{1}{k-1}\right),$$
which is realized by assigning $k/2$ vertices to each of $2$ parts arbitrarily. By Proposition \ref{p:dk}, we obtain that
\begin{eqnarray*}\textsf{Max-Cut}(G) & = & d_{2}(G)m\geq d_{2}(K_{k})d_{k}(G)m=d_{2}(K_{k})d_{2}(K_{k})(m-m_0) \\ & \geq & \frac{1}{2}\left(1+\frac{1}{k-1}\right)\left(m-c_r\frac{n^2}{k^{r+1}}\right)
 \geq  \frac{1}{2}\left(1+\frac{1}{k-1}\right)\left(m-\frac{m}{2k}\right) = \frac{m}{2}+\frac{m}{4(k-1)}.\end{eqnarray*}   
This last inequality gives the desired bound. 
\end{proof}

We will be able to show another lower bound on $\MC(G)$ for graphs $G = (V, E)$, using the following result of Alon, Krivelevich, and Sudakov.
\begin{lemma}[Lemma 3.3~\cite{AL05}] \label{l:abspos}
There is an absolute positive constant $\epsilon$ such that for every positive constant $M$, there is a $\alpha=\alpha(M)>0$ with the following property. If $G=(V,E)$ is a graph with $m$ edges such that the induced subgraph on any set of $N\geq M$ vertices all of which have a common neighbor contains at most $\epsilon N^{3/2}$ edges, then $$\MC{(G)}\geq \frac{m}{2}+\alpha\sum_{v \in V}\sqrt{d(v)}.$$
\end{lemma}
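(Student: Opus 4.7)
The plan is to construct a random $\pm 1$ coloring $\chi$ of $V$ whose expected cut exceeds $m/2$ by $\alpha\sum_{v} \sqrt{d(v)}$, then derandomize. I would follow the Shearer--Alon scheme developed for triangle-free graphs, proceeding in two stages: first partition $V = T \sqcup T'$ by placing each vertex in $T$ independently with probability $1/2$ and coloring $T$ uniformly at random with $\pm 1$; then for each $v \in T'$ set $\chi(v) = -\mathrm{sign}\bigl(\sum_{u \in N(v)\cap T}\chi(u)\bigr)$, placing $v$ opposite the majority color among its already colored neighbors. The expected cut size of the resulting $\chi$ will be our lower bound on $\MC(G)$.

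For a vertex $v$ with $d(v)\ge M$, conditional on $v\in T'$ (a probability $1/2$ event), Chebyshev guarantees $d_T(v) := |N(v)\cap T|$ is of order $d(v)/2$ with constant probability; by the Paley--Zygmund inequality, the imbalance $\bigl|\sum_{u\in N(v)\cap T}\chi(u)\bigr|$ then has expectation $\Theta(\sqrt{d(v)})$. Each such $v$ therefore contributes an extra $\Omega(\sqrt{d(v)})$ over the random baseline for its edges into $T$, yielding a main term of $\Omega\bigl(\sum_v \sqrt{d(v)}\bigr)$ once the contribution of vertices of degree below $M$ is absorbed into the constant $\alpha = \alpha(M)$.

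The main obstacle is controlling the edges with both endpoints in $T'$, whose contribution to the cut depends on the joint distribution of $\chi(u)$ and $\chi(v)$, which are correlated through common $T$-neighbors. This is precisely where the sparsity hypothesis enters: for any vertex $w$, the set $N(w)$ is a set of $|N(w)|$ vertices with a common neighbor, so the hypothesis bounds its internal edges by $\epsilon |N(w)|^{3/2}$ whenever $|N(w)| \ge M$. A second-moment/covariance computation, summing over pairs $u,v \in T'$ with $uv \in E$ and using this sparsity to bound the cumulative effect of shared $T$-neighbors, should show the interference contributes only $o\bigl(\sum_v \sqrt{d(v)}\bigr)$ provided $\epsilon$ is a sufficiently small absolute constant. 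The delicate step is this covariance estimate; after it, taking $\alpha$ small in terms of $\epsilon$ and $M$ closes the argument, and a standard conditional-expectation derandomization converts the expected-cut bound into a deterministic one.
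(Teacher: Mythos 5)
This lemma is imported verbatim from Alon--Krivelevich--Sudakov \cite{AL05} (their Lemma 3.3); the present paper does not reprove it, so there is no in-paper argument to compare against. Your proposal does follow the same Shearer/Alon-style two-stage scheme that underlies the original proof: random bisection into $T\sqcup T'$, uniform coloring of $T$, majority-opposite placement of $T'$. The outline of the ``gain'' term is essentially right (for $v\in T'$, the number of cut edges into $T$ is $d_T(v)/2+\tfrac12|S_v|$ with $S_v=\sum_{u\in N(v)\cap T}\chi(u)$, and $\mathbb{E}|S_v|=\Theta(\sqrt{d_T(v)})$ by Khintchine --- Paley--Zygmund is an odd tool to invoke here, but it can be made to work), and you correctly locate the danger in the $T'$--$T'$ edges and correctly identify that the hypothesis controls $e(N(w))$ for every $w$.

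The genuine gap is that the ``delicate covariance estimate'' you defer is the entire lemma, and it is not filled by the sketch. Two specific problems. First, the claimed loss is not $o\bigl(\sum_v\sqrt{d(v)}\bigr)$: for an edge $uv$ with $u,v\in T'$, the excess probability of not cutting is $\tfrac12\,\mathbb{E}[\mathrm{sign}(S_u)\mathrm{sign}(S_v)]$, which (writing $S_u=X+Z$, $S_v=Y+Z$ with $Z$ supported on $N(u)\cap N(v)\cap T$) is of order $|N(u)\cap N(v)\cap T|/\sqrt{d_T(u)d_T(v)}$ when the shared part is a small fraction, and can saturate near $\tfrac12$ when it is not; the total loss is therefore $O(\epsilon\sum_v\sqrt{d(v)})$ at best, and one must \emph{compare constants} against the gain and choose the absolute $\epsilon$ accordingly, not invoke a little-$o$. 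Second, even granting the Gaussian heuristic, one has to show $\sum_{uv\in E}\frac{|N(u)\cap N(v)|}{\sqrt{d(u)d(v)}}\le C\epsilon\sum_v\sqrt{d(v)}$; this is not an immediate consequence of $e(N(w))\le\epsilon d(w)^{3/2}$. It does follow, but only after a triangle-counting/reweighting step (e.g., charge each triangle to its minimum-degree vertex $w$, use $\frac{1}{\sqrt{d(u)d(v)}}\le\frac{1}{d(w)}$ and $e(N(w))\le\epsilon d(w)^{3/2}$), plus separate handling of low-degree vertices (where the hypothesis gives nothing and you must absorb into $\alpha(M)$) and of the saturated regime where $|N(u)\cap N(v)|$ is comparable to $\min(d(u),d(v))$. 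You also need rigorous anti-concentration in place of the Gaussian heuristic for $\mathbb{E}[\mathrm{sign}(X+Z)\mathrm{sign}(Y+Z)]$. None of these steps is carried out, so as written this is a plausible plan rather than a proof.
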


In particular, this condition holds for graphs with a small forbidden cycle. 

\begin{cor}~\label{c:sqrtbd}
For every positive integer $k$ there is $\alpha = \alpha(k) > 0$ such that
if $G = (V, E)$ is a $C_{k}$-free graph, then $$\MC(G)\geq \frac{m}{2}+\alpha\sum_{v\in V}\sqrt{d(v)}.$$
\end{cor}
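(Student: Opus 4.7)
The plan is to deduce the corollary directly from Lemma~\ref{l:abspos}. Let $\epsilon > 0$ be the absolute constant provided by that lemma. It suffices to exhibit a constant $M = M(k)$ such that in every $C_k$-free graph $G$, whenever $S \subseteq V$ is a set of $N \geq M$ vertices sharing a common neighbor, the induced subgraph $G[S]$ contains at most $\epsilon N^{3/2}$ edges. Applying Lemma~\ref{l:abspos} with this $M$ then immediately produces the desired constant $\alpha = \alpha(k) > 0$ and bound on $\MC(G)$.

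The main step is the structural observation that if the vertices of $S$ lie in the common neighborhood of some $v \in V \setminus S$, then $G[S]$ cannot contain a path on $k - 1$ vertices. Suppose, for contradiction, that $u_1 u_2 \cdots u_{k-1}$ is such a path in $G[S]$; the $u_i$ are distinct from each other and from $v$, and $v$ is adjacent to every $u_i$, so $v u_1 u_2 \cdots u_{k-1} v$ is a cycle on $k$ distinct vertices in $G$, contradicting $C_k$-freeness. Hence $G[S]$ must be $P_{k-1}$-free.

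By the Erd\H{o}s-Gallai theorem on extremal numbers of paths, an $N$-vertex graph with no $P_{k-1}$ has at most $\frac{(k-3)N}{2}$ edges, which is linear in $N$ and thus bounded above by $\epsilon N^{3/2}$ once $N \geq \left( \frac{k-3}{2\epsilon} \right)^2$. Setting $M$ to be (say) the ceiling of this threshold completes the verification. I do not anticipate a genuine obstacle here: the only real content is the path-through-common-neighbor construction, after which the corollary falls out of Lemma~\ref{l:abspos} together with the Erd\H{o}s-Gallai bound.
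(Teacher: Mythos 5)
Your proof is correct and follows essentially the same route as the paper: apply Lemma~\ref{l:abspos} after observing that the common-neighborhood condition together with $C_k$-freeness forces $G[S]$ to be $P_{k-1}$-free, and then invoke the Erd\H{o}s--Gallai bound on paths to get the $O(N)$ edge count dominated by $\epsilon N^{3/2}$ for $N$ large. The only cosmetic differences are the choice of the threshold $M$ and the exact constant in the Erd\H{o}s--Gallai estimate (you use the sharp $(k-3)N/2$, the paper uses the looser $kN/2$), neither of which matters.
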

\begin{proof}
Let $M=M(k)=\epsilon^{-2}k^2$, where $\epsilon>0$ is the absolute constant from Lemma~\ref{l:abspos}. Let $G' = (V', E')$ be an induced subgraph of $G$ with $|V'| = N \ge M$ and $V' \subset N(v)$ for some $v \in V$. It suffices to show that $|E'| \le \epsilon N^{3/2}$. Since $G$ is $C_{k}$-free, $G'$ is $P_{k-1}$-free. Hence,  
$$|E'| \leq \text{ex}(N,P_{k-1}) < Nk/2 < \epsilon N^{3/2},$$
where the middle inequality is due to Erd\H{o}s and Gallai~\cite{ErGa59}.
\end{proof}

We leverage these results to give an improved upper bound on the $\MC$ of a $C_{2r+1}$-free graph on $m$ edges.

\begin{proof}[Proof of Theorem~\ref{max-cut-odd-cycle-free}]
Let $k=2r+1$ with $r$ a positive integer and observe that $(k+5)/(k+7)=1-1/(r+4)$. Let $G=(V,E)$ be a $C_{2r+1}$-free graph with $m$ edges. Let $U \subset V$ be the set of vertices of degree at least $m^{2/(r+4)}$. As the sum of the degrees of vertices in $G$ is $2m$, we have $|U| \leq (2m)/m^{2/(r+4)}=2m^{1-2/(r+4)}$.

If the induced subgraph $G[U]$ contains at least $m/2$ edges, then applying Lemma \ref{l:maxl} to $G[U]$, we obtain that the surplus of $G[U]$ is $\Omega_r((m/|U|^2)^{1/r}m)$, which is at least $\Omega_r(m^{1-1/(r+4)})$. Lemma~\ref{l:surplussubgraph} implies that the surplus of a graph as it least the surplus of any induced subgraph. Hence, the surplus of $G$ is also $\Omega_r(m^{1-1/(r+4)})$.

Otherwise, the induced subgraph $G[U]$ has less than $m/2$ edges, and so $\sum_{v \in V\setminus U} d(v) \geq m/2$. By Corollary \ref{c:sqrtbd}, the surplus of $G$ is, for some positive constant $c_r$, at least $$c_r\sum_{v \in V} \sqrt{d(v)} \geq c_r\sum_{v \in V \setminus U} \sqrt{d(v)} \geq c_r\frac{m/2}{m^{2/(r+4)}}\sqrt{m^{2/(r+4)}} =\Omega_r(m^{1-1/(r+4)}),$$
where the last inequality uses concavity of the function $f(x)=x^{1/2}$ and that $d(v) < m^{2/(r+4)}$ for all $v \in V \setminus U$. 
\end{proof}

\section{Concluding Remarks}\label{s:conclude}
Theorems~\ref{t:oddcycle} and \ref{c:lb} together determine  $h(n, k, C_{2r+1})$ up to a constant factor depending only on $r$ for sufficiently large $n$. 
The lower bound in Theorem~\ref{c:lb} relies on the construction of Alon of a relatively dense pseudorandom $C_{2r+1}$-free graph as in Theorem~\ref{t:girth}. A corresponding pseudorandom graph construction of $K_r$-free graphs does not exist apart from the case $r=3$ (as $K_3=C_3$). However, it is conjectured that such a graph exists. If so, the proof would carry over and we would obtain the following conjecture, which would show that Theorem~\ref{t:clique} is tight up to a constant function of $r$ for sufficiently large $n$.

\begin{conj}
If $r \geq 3$ and $n \gg k$, then $h(n, k, K_r) = \Omega_{r} \left(n^{2}/k^{(r-1)/(r-2)}\right)$. 
\end{conj}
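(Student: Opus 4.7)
The plan is to follow the proof of Theorem~\ref{c:lb} verbatim, with Alon's pseudorandom odd-girth graph (Lemma~\ref{t:girth}) replaced by a conjectural pseudorandom $K_r$-free analogue. Specifically, one assumes that for each $r \geq 3$ there is a family of $(N,d,\lambda)$-graphs that are $K_r$-free with $d = \Omega_r\bigl(N^{1-1/(2r-3)}\bigr)$ and $\lambda = O_r(\sqrt{d})$. For $r=3$ this is precisely Alon's construction (the $r=1$ case of Lemma~\ref{t:girth}); for $r \geq 4$ the existence of such a graph is a well-known open problem, and producing one is the main (indeed, the only) obstacle. Everything else will be a routine adaptation of the odd-cycle argument.

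Given such a graph $H$ on $N$ vertices, I would choose $N = \Theta_r\bigl(k^{(2r-3)/(r-2)}\bigr)$, so that $d = \Theta_r(k^2)$ and hence, for $N$ large enough in terms of $k$, the pseudorandomness bound $\lambda \leq c_r\sqrt{d}$ forces $\lambda \leq d/(2k)$. For any $k$-partition $V(H) = V_1 \sqcup \cdots \sqcup V_k$, the Expander Mixing Lemma (Lemma~\ref{t:expander}) together with $\sum_i |V_i|^2 \geq N^2/k$ yields
\begin{equation*}
2\sum_{i=1}^k e(V_i) \;\geq\; \frac{d}{N}\sum_{i=1}^{k} |V_i|^2 - \lambda N \;\geq\; \frac{dN}{k} - \lambda N \;\geq\; \frac{dN}{2k} \;=\; \Omega_r\!\left(\frac{N^2}{k^{(r-1)/(r-2)}}\right),
\end{equation*}
the final identity being an exponent check using $d = \Theta_r(k^2)$ and $N = \Theta_r\bigl(k^{(2r-3)/(r-2)}\bigr)$. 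Hence $h(H,k) = \Omega_r\bigl(N^2/k^{(r-1)/(r-2)}\bigr)$.

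To pass from $H$ to graphs of arbitrary size $n \gg k$, I would take the balanced $t$-blow-up $H[t]$ with $t = \lfloor n/N \rfloor$. A copy of $K_r$ in $H[t]$ must have its $r$ vertices in $r$ distinct parts (each part is independent), and projecting to the original vertex set produces a $K_r$ in $H$; thus $H[t]$ remains $K_r$-free. Applying Lemma~\ref{l:blowup} and Proposition~\ref{p:monotone} (valid since $K_r$ has no isolated vertex) gives
\begin{equation*}
h(n, k, K_r) \;\geq\; h(H[t], k) \;=\; t^2\, h(H, k) \;=\; \Omega_r\!\left(\frac{n^2}{N^2} \cdot \frac{N^2}{k^{(r-1)/(r-2)}}\right) \;=\; \Omega_r\!\left(\frac{n^2}{k^{(r-1)/(r-2)}}\right),
\end{equation*}
which matches the upper bound of Theorem~\ref{t:clique} up to the $r$-dependent constant. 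In summary, the entire difficulty is concentrated in the pseudorandom construction: any $(N,d,\lambda)$ $K_r$-free graph achieving the conjectured maximum density $d = \Theta\bigl(N^{1-1/(2r-3)}\bigr)$ immediately yields the conjectured lower bound via the expander-mixing-plus-blow-up argument above.
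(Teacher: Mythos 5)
The statement is a conjecture in the paper precisely because no optimally pseudorandom $K_r$-free $(N,d,\lambda)$-graph is known for $r\geq 4$; the authors explicitly note in Section~\ref{s:conclude} that the only missing ingredient is a $K_r$-free analogue of Lemma~\ref{t:girth}, and that if such a graph existed the proof of Theorem~\ref{c:lb} would carry over. Your proposal carries out exactly that reduction, and your choice $N = \Theta_r\bigl(k^{(2r-3)/(r-2)}\bigr)$, the expander-mixing computation giving $h(H,k)=\Omega_r\bigl(N^2/k^{(r-1)/(r-2)}\bigr)$, and the blow-up step via Lemma~\ref{l:blowup} and Proposition~\ref{p:monotone} all check out, so this faithfully matches the paper's intended (conditional) argument while correctly flagging the unproven pseudorandom construction as the sole obstacle.
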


It is known that other interesting results would follow from knowing the existence of such pseudorandom $K_r$-free graphs, including giving nearly tight bounds for off-diagonal Ramsey numbers (see \cite{MV21,HW20}).

It would also be interesting to get better bounds on the \textsf{Max-Cut} of $H$-free graphs for other forbidden subgraphs $H$. The methods of Sections~\ref{s:lcuts} and~\ref{s:maxcut} can be used to obtain improved bounds for some other $H$, like odd wheels.

\section*{Acknowledgements}
We'd like to thank Matthew Kwan for pointing out the reference~\cite{GUO20} from which Proposition~\ref{p:cyclefree} follows.

\end{document}